\documentclass[12pt]{amsart}
\usepackage[utf8]{inputenc}
\usepackage[english]{babel}
\usepackage{amsmath,amssymb,a4wide,latexsym,amscd,amsthm}
\usepackage{graphics}

\theoremstyle{plain}
\newtheorem{theorem}{Theorem}[section]
\newtheorem{lemma}{Lemma}[section]
\newtheorem{proposition}{Proposition}[section]

\theoremstyle{definition}

\newtheorem{remark}{Remark}[section]

\numberwithin{equation}{section}

\begin{document}

\title[]
{On the inverse best approximation property of systems of subspaces
of a Hilbert space}

\author[]{Ivan Feshchenko}
\address{Institute of Mathematics of NAS of Ukraine, Kyiv, Ukraine}
\email{ivanmath007@gmail.com}

\begin{abstract}
Let $H$ be a Hilbert space and $H_1,...,H_n$ be closed subspaces of $H$.
Denote by $P_k$ the orthogonal projection onto $H_k$, $k=1,2,...,n$.
Following Patrick L. Combettes and Noli N. Reyes,
we will say that the system of subspaces $H_1,...,H_n$ possesses the
inverse best approximation property (IBAP) if for arbitrary elements
$x_1\in H_1,...,x_n\in H_n$ there exists an element $x\in H$ such that
$P_k x=x_k$ for all $k=1,2,...,n$.
We provide various new necessary and sufficient conditions
for a system of $n$ subspaces to possess the IBAP.
Using the main characterization theorem,
we study properties of the systems of subspaces which possess the IBAP,
obtain a sufficient condition for a system of subspaces to possess the IBAP,
and provide examples of systems of subspaces which possess the IBAP.

These results are applied to a problem of probability theory.
Let $(\Omega,\mathcal{F},\mu)$ be a probability space and 
$\mathcal{F}_1,...,\mathcal{F}_n$ be sub-$\sigma$-algebras of $\mathcal{F}$.
We will say that the collection $\mathcal{F}_1,...,\mathcal{F}_n$ possesses
the inverse marginal property (IMP) if for arbitrary random variables
$\xi_1,...,\xi_n$ such that 
(1) $\xi_k$ is $\mathcal{F}_k$-measurable, $k=1,2,...,n$;
(2) $E|\xi_k|^2<\infty$, $k=1,2,...,n$;
(3) $E\xi_1=E\xi_2=...=E\xi_n$,
there exists a random variable $\xi$ such that $E|\xi|^2<\infty$ and 
$E(\xi|\mathcal{F}_k)=\xi_k$ for all $k=1,2,...,n$.
We will show that a collection of sub-$\sigma$-algebras possesses the IMP
if and only if the system of corresponding marginal subspaces possesses the IBAP.
We consider two examples; in the first example $\Omega=\mathbb{N}$,
in the second example $\Omega=[a,b)$.
For these examples we establish relations between 
the IMP, the IBAP, closedness of the sum of marginal subspaces and
``fast decreasing'' of tails of the measure $\mu$.
Also, we provide a sufficient condition for a collection
of sub-$\sigma$-algebras to possess the IMP.
\end{abstract}

\subjclass[2010]{Primary 46C05, 46C07; Secondary 47B15, 46N30}

\keywords{Hilbert space, closed subspace, marginal subspace, 
sum of subspaces, system of subspaces, orthogonal projection,
inverse best approximation property, inverse marginal property.}

\maketitle

\section{Introduction}

\subsection{}
Let $H$ be a real or complex Hilbert space and $H_1,...,H_n$ be closed
subspaces of $H$.
Denote by $P_k$ the orthogonal projection onto $H_k$, $k=1,2,...,n$.
Following Patrick L. Combettes and Noli N. Reyes \cite{CR10},
we will say that the system of subspaces $H_1,...,H_n$ possesses the
inverse best approximation property (IBAP) if for arbitrary elements
$x_1\in H_1,...,x_n\in H_n$ there exists an element $x\in H$ such that
$P_k x=x_k$ for all $k=1,2,...,n$.

The simplest example of a system of subspaces which possesses the IBAP
is a system of pairwise orthogonal subspaces.
In this case a needed element $x$ can be defined by $x:=x_1+...+x_n$.

\subsection{}
Two remarks are in order.

\begin{remark}
The IBAP depends only on the geometry of relative position of the subspaces.
More precisely, the following result is valid.

\begin{proposition}\label{prop:relative position}
Let $H$ be a Hilbert space and $H_1,...,H_n$ be closed subspaces of $H$.
The system of subspaces $H_1,...,H_n$ possesses the IBAP in $H$
if and only if the system possesses the IBAP in
$\overline{H_1+...+H_n}$.
\end{proposition}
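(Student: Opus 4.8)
The plan is to set $L:=\overline{H_1+\dots+H_n}$ and to exploit the fact that each $H_k$ is contained in $L$. First I would record the elementary but crucial observation that the orthogonal projection of $H$ onto $H_k$ restricts on $L$ to the orthogonal projection of $L$ onto $H_k$: the orthogonal complement of $H_k$ inside $L$ equals $(H\ominus H_k)\cap L$, so for $x\in L$ the decomposition $x=P_kx+(x-P_kx)$ has $P_kx\in H_k\subseteq L$ and $x-P_kx\in L\ominus H_k$. Hence ``$P_k$'' may be used unambiguously in either space, and $L$ (with the inherited inner product) is itself a Hilbert space containing all the $H_k$.

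Next I would prove the key reduction: for every $x\in H$ one has $P_kx=P_k(P_Lx)$ for all $k$, where $P_L$ denotes the orthogonal projection of $H$ onto $L$. Indeed $x-P_Lx\in L^{\perp}$, and $H_k\subseteq L$ forces $H_k\perp L^{\perp}$, so $P_k(x-P_Lx)=0$. Consequently a tuple $(x_1,\dots,x_n)\in H_1\times\dots\times H_n$ admits an element $x\in H$ with $P_kx=x_k$ for all $k$ if and only if it admits such an element lying in $L$.

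With this in hand both implications are immediate. If the system has the IBAP in $H$, then given $x_k\in H_k$ pick $x\in H$ with $P_kx=x_k$ for all $k$; then $x':=P_Lx\in L$ satisfies $P_kx'=P_k(P_Lx)=P_kx=x_k$, so the IBAP holds in $L$. Conversely, if the IBAP holds in $L$, then for given $x_k\in H_k\subseteq L$ choose $x\in L\subseteq H$ with $P_kx=x_k$ (the same operator $P_k$, by the first step), and this same $x$ witnesses the IBAP in $H$.

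There is essentially no serious obstacle here; the only point requiring a little care is the first step — verifying that the orthogonal projections onto $H_k$ computed in $H$ and in $L$ agree on $L$ — together with the observation that $H_k\subseteq L$ implies $H_k\perp L^{\perp}$, which is precisely what makes the choice of the (possibly much larger) ambient space irrelevant to the property.
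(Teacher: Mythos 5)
Your proof is correct and follows essentially the same route as the paper: both directions reduce to the observation that, with $P_L$ (the paper's $Q$) the projection onto $L=\overline{H_1+\dots+H_n}$, one has $P_k(P_Lx)=P_kx$ because $H_k\subseteq L$, so a solution in $H$ can be replaced by its projection onto $L$. Your extra preliminary step (that the projections onto $H_k$ computed in $H$ and in $L$ agree on $L$) just makes explicit what the paper uses implicitly.
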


Note that $\overline{H_1+...+H_n}$ is the smallest closed subspace
that contains $H_1,...,H_n$.
\end{remark}

\begin{remark} 
The question about existence of a solution to more
general systems of equations $T_k x=v_k$, $k=1,2,...,n$ can be reduced to the
question about the IBAP of the system of subspaces 
$(\ker(T_1))^\bot,...,(\ker(T_n))^\bot$.
More precisely, the following result is valid.

\begin{proposition}\label{prop:basic}
Let $H$ be a Hilbert space, $V_1,...,V_n$ be vector spaces, and
$T_1:H\to V_1,...,T_n:H\to V_n$ be linear operators.
Consider the following statements:

(1) for arbitrary elements $v_1\in Ran(T_1),...,v_n\in Ran(T_n)$ there exists
an element $x\in H$ such that $T_k x=v_k$, $k=1,2,...,n$;

(2) the system of subspaces $(\ker(T_1))^\bot,...,(\ker(T_n))^\bot$ possesses
the IBAP.

Then $(1)\Rightarrow (2)$.
Moreover, if each subspace $\ker(T_k)$, $k=1,2,...,n$ is closed, then
$(2)\Rightarrow (1)$.
\end{proposition}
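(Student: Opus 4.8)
The plan is to unwind both implications directly from the definitions. Write $N_k:=\ker(T_k)$ and $M_k:=N_k^\bot$, and let $P_k$ be the orthogonal projection onto $M_k$, so that $Ran(P_k)=M_k$ and $\ker(P_k)=M_k^\bot=\overline{N_k}$. Statement (2) is then, by definition, the IBAP of the system $M_1,\dots,M_n$. The only elementary facts I will use are the inclusion $N_k\subseteq\overline{N_k}=\ker(P_k)$, which holds always and becomes the equality $N_k=\ker(P_k)$ exactly when $N_k$ is closed, together with $P_k y=y$ for $y\in M_k$.

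For $(1)\Rightarrow(2)$, assume (1) and let $y_1\in M_1,\dots,y_n\in M_n$ be arbitrary; I must find $x\in H$ with $P_k x=y_k$ for all $k$. I apply (1) to the data $v_k:=T_k y_k\in Ran(T_k)$, obtaining $x\in H$ with $T_k x=T_k y_k$ for every $k$. Then $x-y_k\in N_k\subseteq\ker(P_k)$, hence $P_k x=P_k y_k=y_k$. Note that closedness of the $N_k$ is not needed here, precisely because $x-y_k$ lies in $N_k$ itself.

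For $(2)\Rightarrow(1)$, assume (2) and that every $N_k$ is closed, so that $\ker(P_k)=N_k$. Let $v_k\in Ran(T_k)$ be arbitrary and choose $u_k\in H$ with $T_k u_k=v_k$. Applying the IBAP of $M_1,\dots,M_n$ to the elements $P_k u_k\in M_k$ produces $x\in H$ with $P_k x=P_k u_k$ for all $k$; then $x-u_k\in\ker(P_k)=N_k=\ker(T_k)$, so $T_k x=T_k u_k=v_k$, as desired.

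The single delicate point — and the reason the hypothesis ``$\ker(T_k)$ closed'' is required for $(2)\Rightarrow(1)$ but not for $(1)\Rightarrow(2)$ — is that the operators $T_k$ are not assumed continuous, so $T_k$ may fail to vanish on $\overline{N_k}\setminus N_k$. Consequently the deduction ``$x-u_k\in\ker(P_k)$ implies $T_k x=T_k u_k$'' is legitimate only when $\ker(P_k)$ coincides with $N_k$; in the first implication this gap never opens, because there the relevant difference is forced to lie in $N_k$ rather than merely in its closure.
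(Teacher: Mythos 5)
Your proof is correct and follows essentially the same route as the paper: in $(1)\Rightarrow(2)$ you apply (1) to $v_k:=T_k y_k$ and use $x-y_k\in\ker(T_k)\subseteq\ker(P_k)$, while in $(2)\Rightarrow(1)$ you project preimages $u_k$ and use $(\ker(T_k)^\bot)^\bot=\overline{\ker(T_k)}=\ker(T_k)$, exactly as the paper does. Your closing remark on why closedness is needed only for the second implication is a correct and welcome clarification, but not a different argument.
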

\end{remark}

\subsection{}
The IBAP is closely related to the notion of linear independence.
Recall that a system of subspaces $H_1,...,H_n$ is said to be linearly independent
if an equality $x_1+...+x_n=0$, where $x_1\in H_1,...,x_n\in H_n$, implies that
$x_1=...=x_n=0$.
It is easily seen \cite[Proposition 2.2]{CR10} 
that if elements $x_1\in H_1,...,x_n\in H_n$ are
such that $x_1+...+x_n=0$ and $(x_1,...,x_n)\neq (0,...,0)$, then
there is no $x\in H$ such that $P_k x=x_k$ for all $k=1,2,...,n$.
Conclusion \cite[Corollary 2.3]{CR10}: if a system of subspaces possesses the IBAP, 
then this system is linearly independent.
However, even for $n=2$ the linear independence of $H_1, H_2$,
i.e., the condition that $H_1\cap H_2=\{0\}$,
is not sufficient for a pair $H_1, H_2$ to possess the IBAP
(see \cite[Example 2.4]{CR10}).
It turns out that the linear independence is equivalent to the approximate IBAP.
More precisely, the following result \cite[Proposition 2.5]{CR10} is valid.

\begin{proposition}
Let $H$ be a Hilbert space and $H_1,...,H_n$ be closed subspaces of $H$.
The following statements are equivalent:

(1) the system of subspaces $H_1,...,H_n$ is linearly independent;

(2) for arbitrary elements $x_1\in H_1,...,x_n\in H_n$ and every $\varepsilon>0$
there exists an element $x\in H$ such that $\|P_k x-x_k\|\leqslant\varepsilon$
for all $k=1,2,...,n$.
\end{proposition}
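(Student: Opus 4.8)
The plan is to package the whole system into one bounded operator and reduce both conditions to statements about its range. Form the external orthogonal direct sum $\mathcal{H}:=H_1\oplus\cdots\oplus H_n$ with $\|(y_1,\dots,y_n)\|^2=\sum_{k=1}^{n}\|y_k\|^2$, and define $T\colon H\to\mathcal{H}$ by $Tx:=(P_1x,\dots,P_nx)$; this $T$ is linear and bounded. The one computation to carry out is the adjoint: for $y=(y_1,\dots,y_n)\in\mathcal{H}$ and $x\in H$ one has $\langle Tx,y\rangle=\sum_{k=1}^{n}\langle P_kx,y_k\rangle=\sum_{k=1}^{n}\langle x,P_ky_k\rangle=\langle x,\sum_{k=1}^{n}y_k\rangle$, where $P_ky_k=y_k$ because $y_k\in H_k$; hence $T^*y=y_1+\cdots+y_n$.

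Next I would translate statement (2). Since $\|Tx-(x_1,\dots,x_n)\|^2=\sum_{k=1}^{n}\|P_kx-x_k\|^2$, the conditions ``$\|P_kx-x_k\|\leqslant\varepsilon$ for all $k$'' and ``$\|Tx-(x_1,\dots,x_n)\|\leqslant\varepsilon$'' imply each other up to replacing $\varepsilon$ by $\sqrt{n}\,\varepsilon$, so (2) is exactly the assertion that $Ran(T)$ is dense in $\mathcal{H}$. By the standard identity $\overline{Ran(T)}=(\ker T^*)^{\bot}$ for bounded operators between Hilbert spaces, this density is equivalent to $\ker T^*=\{0\}$. Finally, from the formula above, $\ker T^*$ consists precisely of the tuples $(y_1,\dots,y_n)$ with $y_k\in H_k$ for all $k$ and $y_1+\cdots+y_n=0$; so $\ker T^*=\{0\}$ says exactly that such a relation forces $y_1=\cdots=y_n=0$, i.e.\ that $H_1,\dots,H_n$ is linearly independent. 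Chaining the three equivalences yields (1)$\iff$(2).

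I do not expect a real obstacle here; the only points needing a line of care are the harmless $\sqrt{n}$ discrepancy between the two ``approximate equality'' conditions and the identity $P_ky_k=y_k$ used to compute $T^*$. Should one wish to bypass the external direct sum, the same argument can be made by hand: for (1)$\Rightarrow$(2), any nonzero $y\in Ran(T)^{\bot}$ gives $\langle x,\sum_k y_k\rangle=0$ for all $x\in H$, hence a nontrivial relation $\sum_k y_k=0$, contradicting linear independence; for (2)$\Rightarrow$(1), given $x_k\in H_k$ with $\sum_k x_k=0$, choosing $x$ with $\|P_kx-x_k\|\leqslant\varepsilon$ and using $\sum_k\langle P_kx,x_k\rangle=\langle x,\sum_k x_k\rangle=0$ yields $\sum_k\|x_k\|^2=\sum_k\langle x_k-P_kx,x_k\rangle\leqslant\varepsilon\sum_k\|x_k\|$, and letting $\varepsilon\to0$ forces $x_k=0$ for all $k$.
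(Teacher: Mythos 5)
Your proof is correct. The paper itself does not prove this proposition --- it quotes it from Combettes--Reyes \cite[Proposition~2.5]{CR10} --- but your operator $T$ is exactly the operator $J$ that the paper introduces in the proof of Theorem~\ref{T:main}, with $T^*y=y_1+\cdots+y_n$ being the paper's $S=J^*$; your argument (statement (2) $\Leftrightarrow$ $Ran(T)$ dense $\Leftrightarrow$ $\ker T^*=\{0\}$ $\Leftrightarrow$ linear independence) is precisely the approximate analogue of the paper's chain ``IBAP $\Leftrightarrow$ $J$ surjective $\Leftrightarrow$ $S$ is an isomorphic embedding,'' so you are working in the same framework as the source. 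Both the direct-sum version and your hands-on variant at the end are sound; the only cosmetic points are the harmless $\sqrt{n}$ rescaling of $\varepsilon$ and, in the estimate $\sum_k\|x_k\|^2=\sum_k\langle x_k-P_kx,x_k\rangle\leqslant\varepsilon\sum_k\|x_k\|$, the implicit passage through the absolute value of the (possibly complex) right-hand sum, which is immediate.
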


\subsection{}
In \cite[Theorem 2.8]{CR10} P.L. Combettes and N.N. Reyes obtained
various necessary and sufficient conditions for a system of subspaces to possess
the IBAP.
In particular, for $n=2$ the following statements are equivalent
(for a more complete picture see \cite[Corollary 2.12]{CR10}):

(1) a pair of subspaces $H_1,H_2$ possesses the IBAP;

(2) $H_1\cap H_2=\{0\}$ and $H_1+H_2$ is closed in $H$;

(3) $H_1^\bot+H_2^\bot=H$.

In Section~\ref{S:mct} we provide various new necessary and sufficient conditions
for a system of $n$ subspaces to possess the IBAP (see Theorem~\ref{T:main}).
For example, we will show that the following conditions are equivalent:

(1) a system of subspaces $H_1,...,H_n$ possesses the IBAP;

(2) the subspaces $H_1,...,H_n$ are linearly independent
and their sum $H_1+...+H_n$ is closed in $H$;

(3) $\sum_{i=1}^n\bigcap_{j\neq i}H_j^\bot=H$.

In light of these results, it is worth mentioning that V.S. Sunder
in \cite{S88} study the structure of $n$-tuples of closed subspaces
$H_1,...,H_n$ of a Hilbert space $H$ such that 
$H_1,...,H_n$ are linearly independent and $H_1+...+H_n=H$.
The results of V.S. Sunder can be transferred to the systems of subspaces
which possess the IBAP (a system of subspaces $H_1,...,H_n$
can be considered as a system of subspaces of the Hilbert space $H_1+...+H_n$).

In Section~\ref{S:further results}, using Theorem~\ref{T:main},
we study properties of the systems of subspaces which possess the IBAP,
obtain a sufficient condition for a system of subspaces to possess the IBAP,
and provide examples of systems of subspaces which possess the IBAP 
(see Theorems~\ref{T:IBAP and Riesz}, \ref{T:eigenspaces}, \ref{T:root subspaces},
\ref{T:stability}).

In \cite[Section 4]{CR10} P.L. Combettes and N.N. Reyes
applied their results to problems of harmonic analysis,
integral equations, signal theory, and wavelet frames.
In Section~\ref{S:IMP} we apply results of Sections~\ref{S:mct}
and \ref{S:further results} to a problem of probability theory.
Let $(\Omega,\mathcal{F},\mu)$ be a probability space and 
$\mathcal{F}_1,...,\mathcal{F}_n$ be sub-$\sigma$-algebras of $\mathcal{F}$.
We will say that the collection $\mathcal{F}_1,...,\mathcal{F}_n$ possesses
the inverse marginal property (IMP) if for arbitrary random variables
$\xi_1,...,\xi_n$ such that 

(1) $\xi_k$ is $\mathcal{F}_k$-measurable, $k=1,2,...,n$;

(2) $E|\xi_k|^2<\infty$, $k=1,2,...,n$;

(3) $E\xi_1=E\xi_2=...=E\xi_n$,

there exists a random variable $\xi$ such that $E|\xi|^2<\infty$ and 
$E(\xi|\mathcal{F}_k)=\xi_k$ for all $k=1,2,...,n$.

The simplest example of a collection of sub-$\sigma$-algebras which possesses the IMP 
is a system of pairwise independent sub-$\sigma$-algebras.
In this case a needed random variable $\xi$ 
can be defined by $\xi:=\xi_1+...+\xi_n-(n-1)a$, where
$a:=E\xi_1=E\xi_2=...=E\xi_n$.

We will show that a collection of sub-$\sigma$-algebras possesses the IMP
if and only if the system of corresponding marginal subspaces possesses the IBAP
(see Theorem~\ref{T:IMP}).
We consider two examples; in the first example $\Omega=\mathbb{N}$
(see Subsection~\ref{SS:Omega=N}),
in the second example $\Omega=[a,b)$
(see Subsection~\ref{SS:Omega=[a,b)}).
For these examples we establish relations between 
the IMP, the IBAP, closedness of the sum of marginal subspaces and
``fast decreasing'' of tails of the measure $\mu$
(see Theorems~\ref{T:N} and \ref{T:[a,b)}).
Also, we provide a sufficient condition for a collection
of sub-$\sigma$-algebras to possess the IMP
(see Theorem~\ref{T:sufficient for IMP}).

A few of our results, namely, equivalence $(1)\Leftrightarrow(2)$
in Theorem~\ref{T:main} and Theorem~\ref{T:sufficient for IMP}
are contained in \cite[Examples 4.7 and 4.8]{F12}.
Here we provide clearer proofs of the results.

\subsection{Proof of Proposition \ref{prop:relative position}}
Assume that the system $H_1,...,H_n$ possesses the IBAP in
$\overline{H_1+...+H_n}$.
Then, clearly, this system possesses the IBAP in $H$.

Conversely, assume that the system $H_1,...,H_n$ possesses the IBAP in $H$.
Let us show that the system possesses the IBAP in $\overline{H_1+...+H_n}$.
Denote by $Q$ the orthogonal projection onto $\overline{H_1+...+H_n}$.
Consider arbitrary elements $x_1\in H_1,...,x_n\in H_n$.
There exists an element $x\in H$ such that $P_k x=x_k$, $k=1,2,...,n$.
Consider the element $Qx\in\overline{H_1+...+H_n}$.
We have $P_k(Qx)=P_k x=x_k$, $k=1,2,...,n$.
This means that the system $H_1,...,H_n$ possesses the IBAP in $\overline{H_1+...+H_n}$.

\subsection{Proof of Proposition \ref{prop:basic}}
Denote by $P_k$ the orthogonal projection onto $(\ker(T_k))^\bot$, $k=1,2,...,n$.

Let us prove that $(1)\Rightarrow (2)$.
Consider arbitrary $x_1\in (\ker(T_1))^\bot,...,x_n\in (\ker(T_n))^\bot$.
Set $v_k:=T_k x_k$, $k=1,2,...,n$.
We know that there exists an element $x\in H$ such that $T_k x=v_k$, $k=1,2,...,n$.
Then $T_k x=T_k x_k$, $T_k(x-x_k)=0$, $x-x_k\in \ker(T_k)$, $k=1,2,...,n$.
It follows that $P_k(x-x_k)=0$ and $P_k x=x_k$ for $k=1,2,...,n$.
This means that the system of subspaces $(\ker(T_1))^\bot,...,(\ker(T_n))^\bot$ 
possesses the IBAP.

Now assume that $\ker(T_k)$ is closed for all $k=1,2,...,n$.
Let us prove that $(2)\Rightarrow (1)$.
Consider arbitrary $v_1\in Ran(T_1),...,v_n\in Ran(T_n)$.
Then we can write $v_k=T_k x_k$ for some $x_k\in H$, $k=1,2,...,n$.
Consider the elements $P_k x_k\in (\ker(T_k))^\bot$, $k=1,2,...,n$.
We know that there exists an element $x\in H$ such that $P_k x=P_k x_k$
for all $k=1,2,...,n$.
Then $P_k(x-x_k)=0$.
It follows that $x-x_k\in ((\ker(T_k))^\bot)^\bot=\overline{\ker(T_k)}=\ker(T_k)$.
Thus $T_k(x-x_k)=0$ and $T_k x=T_k x_k=v_k$ for all $k=1,2,...,n$.

\section{Notation and auxiliary notions}

\subsection{Notation}
Throughout this paper $H$ is a real or complex Hilbert space.
The inner product in $H$ is denoted by $\langle\cdot,\cdot\rangle$ and 
$\|\cdot\|$ stands for the corresponding norm, $\|x\|=\sqrt{\langle x,x\rangle}$.
By a subspace of $H$ we will mean a linear subset of $H$.
For Hilbert spaces $H_1,...,H_n$ we denote by 
$H_1\oplus H_2\oplus...\oplus H_n$ their orthogonal direct sum.
The kernel and range of an operator $T$ will be denoted by 
$\ker(T)$ and $Ran(T)$, respectively.
For a continuous linear operator $T$ between two Hilbert spaces
we denote by $T^*$ its adjoint.
All vectors are vector-columns; the superscript ``t'' means transpose.

\subsection{Auxiliary notions}
For convenience of the reader we provide a ``vocabulary''
of several notions used in the paper.

\textbf{Orthogonal complement.}
Let $H$ be a Hilbert space and $M$ be a nonempty subset of $H$.
Denote by $M^\bot$ the set of all elements $x\in H$ such that
$\langle x,y\rangle=0$ for every $y\in M$.
Note that $M^\bot$ is a closed subspace of $H$.
If $M$ is a closed subspace of $H$, then $H=M+M^\bot$.

\textbf{Sum of subspaces.}
Let $V$ be a vector space and $V_1,...,V_n$ be subspaces of $V$.
Define the sum of $V_1,...,V_n$ in the natural way, namely,
$$
V_1+...+V_n:=\{x_1+...+x_n|x_1\in V_1,...,x_n\in V_n\}.
$$
It is clear that $V_1+...+V_n$ is a subspace of $V$.

\textbf{Linear independence.}
Let $V$ be a vector space and $V_1,...,V_n$ be subspaces of $V$.
The system of subspaces $V_1,...,V_n$ is said to be linearly independent 
if an equality $x_1+...+x_n=0$, where $x_1\in V_1,...,x_n\in V_n$,
implies that $x_1=...=x_n=0$.

\textbf{Isomorphism.}
Let $H$ and $K$ be two Hilbert spaces.
A mapping $A:H\to K$ is called an isomorphism if
(1) $A$ is a bijection;
(2) $A$ is linear;
(3) the operators $A:H\to K$ and $A^{-1}:K\to H$ are continuous.
From the Banach inverse mapping theorem it follows that
a continuous linear operator $A:H\to K$ with
$\ker(A)=\{0\}$ and $Ran(A)=K$ is an isomorphism.

\textbf{Isomorphic embedding.}
Let $H$ and $K$ be two Hilbert spaces.
A continuous linear operator $A:H\to K$ is called 
an isomorphic embedding if $A$, 
considered as the operator from $H$ to $Ran(A)$, is an isomorphism.
For a continuous linear operator $A:H\to K$ the following statements
are pairwise equivalent:

(1) $A$ is an isomorphic embedding;

(2) $\ker(A)=\{0\}$ and $Ran(A)$ is closed in $K$;

(3) there exists a number $c>0$ such that $\|Ax\|\geqslant c\|x\|$
for all $x\in H$;

(4) the operator $A^* A:H\to H$ is an isomorphism. 

\textbf{Isomorphic systems of subspaces.}
Let $H$ be a Hilbert space and $H_1,...,H_n$ be closed subspaces of $H$.
Let $K$ be a Hilbert space and $K_1,...,K_n$ be closed subspaces of $K$.
Following \cite{EW06}, we will say that systems $(H;H_1,...,H_n)$ and
$(K;K_1,...,K_n)$ are isomorphic if there exists an isomorphism
$\varphi:H\to K$ such that $\varphi(H_i)=K_i$ for $i=1,2,...,n$.
As Masatoshi Enomoto and Yasuo Watatani write in \cite{EW06},
the fact that the systems are isomorphic means that the relative
positions of $n$ subspaces $(H_1,...,H_n)$ in $H$ and
$(K_1,...,K_n)$ in $K$ are same under disregarding angles.

\textbf{Projection.}
Let $V$ be a vector space and $M$ be a subspace of $V$.
A linear operator $P:V\to V$ is called a projection onto $M$ if
$Px\in M$ for all $x\in V$ and $Px=x$ for all $x\in M$.
A linear operator $P:V\to V$ is called a projection if
there exists a subspace $M$ such that $P$ is a projection onto $M$.
It is easily seen that a linear operator $P:V\to V$ is a projection
if and only if $P^2=P$.

\textbf{The Gram operator of a system of subspaces.}
Let $H$ be a Hilbert space and $H_1,...,H_n$ be closed subspaces of $H$.
Denote by $P_k$ the orthogonal projection onto $H_k$, $k=1,2,...,n$.
The operator 
$G=G(H_1,...,H_n):H_1\oplus...\oplus H_n\to H_1\oplus...\oplus H_n$
defined by its block decomposition $G=(G_{ij}|i,j=1,2,...,n)$,
where $G_{ij}:=P_i|_{H_j}:H_j\to H_i$, $i,j=1,2,...,n$
is called the Gram operator of the system $(H;H_1,...,H_n)$.
Properties of the Gram operator can be found in \cite[Section 2]{SF13}.

\textbf{Equivalent inner products.}
Let $V$ be a vector space and $\langle\cdot,\cdot\rangle_1$ and
$\langle\cdot,\cdot\rangle_2$ be two inner products in $V$.
The inner products are said to be equivalent if the corresponding
norms $\|\cdot\|_1$ and $\|\cdot\|_2$ are equivalent, i.e., if
there exist numbers $a>0$ and $b>0$ such that
$a\|x\|_1\leqslant\|x\|_2\leqslant b\|x\|_1$ for all $x\in V$.

\textbf{Inclination ($\delta$).}
Let $X$ be a normed vector space and $Y,Z$ be subspaces of $X$.
Assume that $Y\neq\{0\}$.
The inclination of $Y$ to $Z$, $\delta(Y,Z)$, is defined by
$$
\delta(Y,Z):=\inf\{dist(y,Z)\,|\,y\in Y, \|y\|=1\},
$$
where $dist(y,Z):=\inf\{\|y-z\|\,|\,z\in Z\}$.
It is clear that 
$$
\delta(Y,Z)=\inf\{\dfrac{dist(y,Z)}{\|y\|}\,|\,y\in Y\setminus\{0\}\}.
$$
If $Y=\{0\}$, then we set $\delta(Y,Z):=+\infty$.
The notion of inclination is very well known in the geometry of Banach spaces. 
Properties on the inclination can be found, 
for example, in \cite[Chapter 1]{GL05}.

\textbf{Opening ($\theta$).}
Let $H$ be a Hilbert space and $M,N$ be two closed subspaces of $H$.
Denote by $P,Q$ the orthogonal projections onto $M,N$, respectively.
The number $\theta(M,N):=\|P-Q\|$ is called the opening between $M$ and $N$.
It is easily seen that $\theta(M,N)\in[0,1]$ and $\theta$ is a metric
on the space of all closed subspaces of $H$.
For other properties of the opening and formulas for the opening
see, e.g., \cite{O94}.

\textbf{Riesz families.}
Let $H$ be a separable infinite dimensional Hilbert space.
A sequence $\{v_n|n\geqslant 1\}$ of elements of the space $H$ is called
a Riesz basis for $H$ if there exist an orthonormal basis $\{e_n|n\geqslant 1\}$
of $H$ and an isomorphism $A:H\to H$ such that $v_n=Ae_n$, $n\geqslant 1$.
A sequence $\{v_n|n\geqslant 1\}$ is called a Riesz sequence if
$\{v_n|n\geqslant 1\}$ is a Riesz basis for the closure of the subspace spanned by
the sequence.
The notions of Riesz basis and Riesz sequence are very well known
(see, e.g., \cite[Section 6.2]{GK69} and \cite[Section 7.2]{H11}).
Various necessary and sufficient conditions for a sequence to be a Riesz basis of $H$
can be found, for example, in \cite[Section 6.2]{GK69} and 
\cite[Section 7.2 and Theorem 8.32]{H11}.
Using these results one can get necessary and sufficient conditions for
a sequence to be a Riesz sequence.
For example, it is well known that a sequence $\{v_n|n\geqslant 1\}$
is a Riesz basis for $H$ if and only if the subspace spanned by
$\{v_n|n\geqslant 1\}$ is dense in $H$ and there exist numbers
$\varepsilon>0$ and $C>0$ such that
\begin{equation}\label{ineq:Riesz}
\varepsilon\left(\sum_{k=1}^N |a_k|^2\right)^{1/2}\leqslant
\|a_1 v_1+...+a_N v_N\|\leqslant
C\left(\sum_{k=1}^N |a_k|^2\right)^{1/2}
\end{equation}
for every $N\geqslant 1$ and arbitrary scalars $a_1,...,a_N$.
It follows that a sequence $\{v_n|n\geqslant 1\}$ is a Riesz sequence
if and only if there exist numbers $\varepsilon>0$ and $C>0$
such that inequalities~\eqref{ineq:Riesz} hold
for every $N\geqslant 1$ and arbitrary scalars $a_1,...,a_N$.

Now let $H$ be arbitrary Hilbert space.
A family $\{v_\alpha|\alpha\in M\}$ of elements of $H$ is called a Riesz basis
for $H$ if there exist an orthonormal basis $\{e_\alpha|\alpha\in M\}$
of $H$ and an isomorphism $A:H\to H$ such that $v_\alpha=Ae_\alpha$,
$\alpha\in M$.
A family $\{v_\alpha|\alpha\in M\}$ is called a Riesz family if
$\{v_\alpha|\alpha\in M\}$ is a Riesz basis for the closure of the subspace
spanned by the family.
One can check that a family $\{v_\alpha|\alpha\in M\}$ is a Riesz family
if and only if there exist numbers $\varepsilon>0$ and $C>0$
such that
$$
\varepsilon\left(\sum_{\alpha\in F}|a_\alpha|^2\right)^{1/2}\leqslant
\|\sum_{\alpha\in F}a_\alpha v_\alpha\|\leqslant
C\left(\sum_{\alpha\in F}|a_\alpha|^2\right)^{1/2}
$$
for every finite subset $F\subset M$ and arbitrary scalars $a_\alpha$, $\alpha\in F$.

\section{IBAP: the main characterization theorem}\label{S:mct}

\begin{theorem}\label{T:main}
Let $H$ be a Hilbert space and $H_1,...,H_n$ be closed subspaces of $H$.
The following statements are equivalent:

(1) the system of subspaces $H_1,...,H_n$ possesses the IBAP;

(2) the subspaces $H_1,...,H_n$ are linearly independent
and their sum $H_1+...+H_n$ is closed in $H$;

(3)
$\inf\{
\dfrac{\|x_1+...+x_n\|}{\sqrt{\|x_1\|^2+...+\|x_n\|^2}}\,|\,
x_1\in H_1,...,x_n\in H_n,
(x_1,...,x_n)\neq (0,...,0)\}>0$;

(4) the Gram operator $G=G(H_1,...,H_n)$ is an isomorphism;

(5) there exist a Hilbert space $K$ and pairwise orthogonal closed
subspaces $K_1,...,K_n$ of the space $K$ such that the system
$(H;H_1,...,H_n)$ is isomorphic to $(K;K_1,...,K_n)$;

(6) there exists an inner product $\langle\cdot,\cdot\rangle_0$ in $H$
which is equivalent to the original inner product in $H$ and such that
the subspaces $H_1,...,H_n$ are pairwise orthogonal in 
$(H;\langle\cdot,\cdot\rangle_0)$.

(7) there exist continuous linear projections $E_k:H\to H$, $k=1,...,n$,
such that $E_i E_j=0$ for every pair $i\neq j$ and $Ran(E_k)=H_k$ for
every $k=1,...,n$;

(8) $\delta(H_i,\sum_{j\neq i}H_j)>0$ for every $i=1,...,n$;

(9) $\sum_{i=1}^n\bigcap_{j\neq i}H_j^\bot=H$;

(10) $H_i^\bot+\bigcap_{j\neq i}H_j^\bot=H$ for arbitrary $i=1,2,...,n$.
\end{theorem}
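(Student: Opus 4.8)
The plan is to establish all ten equivalences through one cycle of implications passing through each statement, organised around a single auxiliary operator. Let $S\colon H_1\oplus\dots\oplus H_n\to H$ be the summation operator $S(x_1,\dots,x_n):=x_1+\dots+x_n$. It is bounded, $Ran(S)=H_1+\dots+H_n$, $\ker(S)=\{0\}$ precisely when $H_1,\dots,H_n$ are linearly independent, its adjoint is $S^*x=(P_1x,\dots,P_nx)$, and a short computation gives $S^*S=G$. With this dictionary, possessing the IBAP means exactly that $S^*$ is surjective, statement (3) means exactly that $S$ is bounded below, and statement (4) means exactly that $S^*S$ is an isomorphism. I would then run the cycle $(1)\Rightarrow(2)\Rightarrow(3)\Rightarrow(4)\Rightarrow(1)$ using only standard Hilbert-space facts: if $S^*$ is surjective then $Ran(S^*)$ is closed, hence $Ran(S)$ is closed by the closed range theorem, while $\overline{Ran(S^*)}=(\ker S)^\bot$ forces $\ker S=\{0\}$, giving (2); from $\ker S=\{0\}$ and $Ran(S)$ closed the Banach inverse mapping theorem makes $S$ an isomorphism onto $Ran(S)$, i.e.\ bounded below, giving (3); the equivalence of (3) and (4) is the characterisation of isomorphic embeddings recalled in Section~2 applied to $A=S$; and (4) reverses the whole chain back to (1) in the same way.

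Next I would attach statements (5), (6), (7), (8) to this cycle. From (2): with $N:=H_1+\dots+H_n$ closed, the space $K:=H_1\oplus\dots\oplus H_n\oplus N^\bot$, with $K_i$ the $i$-th summand, consists of pairwise orthogonal closed subspaces, and $(x_1,\dots,x_n,z)\mapsto x_1+\dots+x_n+z$ is a bounded linear bijection $K\to H$, hence an isomorphism carrying $K_i$ onto $H_i$; this gives (5). From an isomorphism $\psi\colon H\to K$ onto such a model one gets (6) by transporting the inner product, $\langle x,y\rangle_0:=\langle\psi x,\psi y\rangle_K$, which is equivalent to the original norm since $\psi$ is an isomorphism. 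Conversely (6) gives (3) at once: in $\langle\cdot,\cdot\rangle_0$ one has $\|x_1+\dots+x_n\|_0^2=\sum_k\|x_k\|_0^2$ by orthogonality, and norm-equivalence converts this into the inequality of (3). And (6) gives (7) by taking $E_k$ to be the $\langle\cdot,\cdot\rangle_0$-orthogonal projection onto $H_k$, which is bounded, idempotent, has range $H_k$, and annihilates $H_j$ for $j\neq k$, so that $E_kE_j=0$. Then (7)$\Rightarrow$(8): for a bounded projection $E_i$ with $Ran(E_i)=H_i$ and $E_iE_j=0$ one has $\sum_{j\neq i}H_j\subseteq\ker E_i$, hence $\|y-z\|\geq\|E_i(y-z)\|/\|E_i\|=\|y\|/\|E_i\|$ for $y\in H_i$ and $z\in\sum_{j\neq i}H_j$, so $\delta(H_i,\sum_{j\neq i}H_j)\geq1/\|E_i\|>0$. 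Finally (8)$\Rightarrow$(3): for each $i$, $\|x_1+\dots+x_n\|\geq dist(x_i,\sum_{j\neq i}H_j)\geq\delta(H_i,\sum_{j\neq i}H_j)\,\|x_i\|$; squaring and summing over $i$ yields the bound of (3) with an extra factor $1/\sqrt n$.

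For (9) and (10) I would again route through the IBAP itself. For $(1)\Rightarrow(9)$: given $x\in H$, apply the IBAP for each $i$ to the tuple with $P_ix$ in the $i$-th slot and $0$ elsewhere, obtaining $y_i$ with $P_iy_i=P_ix$ and $P_ky_i=0$ for $k\neq i$, i.e.\ $y_i\in\bigcap_{j\neq i}H_j^\bot$; then $x-\sum_iy_i$ is orthogonal to every $H_k$, so it lies in $\bigcap_jH_j^\bot\subseteq\bigcap_{j\neq1}H_j^\bot$, whence $x=\bigl(y_1+(x-\sum_iy_i)\bigr)+y_2+\dots+y_n\in\sum_i\bigcap_{j\neq i}H_j^\bot$. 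The implication $(9)\Rightarrow(10)$ is immediate because $\bigcap_{j\neq k}H_j^\bot\subseteq H_i^\bot$ whenever $k\neq i$, so $H=\sum_k\bigcap_{j\neq k}H_j^\bot\subseteq\bigl(\bigcap_{j\neq i}H_j^\bot\bigr)+H_i^\bot$. And $(10)\Rightarrow(1)$: given $x_k\in H_k$, use $H=H_k^\bot+\bigcap_{j\neq k}H_j^\bot$ to split $x_k=u_k+v_k$ with $u_k\in H_k^\bot$ and $v_k\in\bigcap_{j\neq k}H_j^\bot$; then $P_kv_k=P_kx_k=x_k$ while $P_iv_k=0$ for $i\neq k$, so $x:=v_1+\dots+v_n$ satisfies $P_kx=x_k$ for all $k$, which is the IBAP. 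This closes the cycle through all ten statements.

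The main obstacle is not a single deep argument but the bookkeeping: one must pin down precisely the dictionary between the IBAP and conditions (3), (4) and the properties of $S$ — in particular the identity $S^*S=G$ and the fact that the IBAP is equivalent to surjectivity of $S^*$ — and invoke the closed range and Banach inverse mapping theorems in the right direction. The only genuinely non-mechanical idea is the use of the containment $\bigcap_jH_j^\bot\subseteq\bigcap_{j\neq i}H_j^\bot$ to absorb the defect $x-\sum_iy_i$ in $(1)\Rightarrow(9)$; and one should keep the convention $\delta(\{0\},Z)=+\infty$ in mind so that the trivial case of a zero subspace causes no trouble in $(7)\Rightarrow(8)\Rightarrow(3)$.
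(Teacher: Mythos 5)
Your proposal is correct, and for most of the theorem it runs along the same lines as the paper: the same dictionary built from the summation operator $S$ and its adjoint $J=S^*$ (IBAP $\Leftrightarrow$ $S^*$ surjective $\Leftrightarrow$ $S$ bounded below, $S^*S=G$) handles (1)--(4); the orthogonal model $H_1\oplus\dots\oplus H_n\oplus N^\bot$ gives (5); transporting the inner product gives (6); the $\langle\cdot,\cdot\rangle_0$-orthogonal projections give (7); and the inclination estimates for (8), as well as the proofs of $(9)\Rightarrow(10)\Rightarrow(1)$, are essentially those of the paper. The genuine differences are organisational and one substantive shortcut. Organisationally, you close the equivalence of (1)--(4) by a cycle through the closed range and inverse mapping theorems rather than the paper's repeated use of the single equivalence ``IBAP $\Leftrightarrow$ $S$ is an isomorphic embedding,'' and you return from (7) via $(7)\Rightarrow(8)\Rightarrow(3)$ where the paper uses $(7)\Rightarrow(2)$ (sum of the projections $E_1+\dots+E_n$); both are sound. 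Substantively, your route to (9) differs: the paper proves $(5)\Rightarrow(9)$ by passing to the orthogonal model and computing $\bigcap_{j\neq i}H_j^\bot=(\varphi^*)^{-1}(K_i+N)$, whereas you prove $(1)\Rightarrow(9)$ directly, solving the IBAP problem for the tuples $(0,\dots,0,P_ix,0,\dots,0)$ to get $y_i\in\bigcap_{j\neq i}H_j^\bot$ and absorbing the defect $x-\sum_i y_i\in\bigcap_j H_j^\bot\subseteq\bigcap_{j\neq 1}H_j^\bot$ into the first summand. Your argument is more elementary (no adjoint of the isomorphism is needed) and makes the decomposition in (9) explicit; the paper's argument has the mild advantage of exhibiting the subspaces $\bigcap_{j\neq i}H_j^\bot$ concretely as preimages under $\varphi^*$ of the orthogonal model, which is structural information beyond the bare identity $\sum_i\bigcap_{j\neq i}H_j^\bot=H$. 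Your attention to the degenerate case $H_i=\{0\}$ in $(7)\Rightarrow(8)$ (via the convention $\delta(\{0\},Z)=+\infty$) is appropriate and does not affect correctness.
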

\begin{proof}
Denote by $P_k$ the orthogonal projection onto $H_k$, $k=1,...,n$.
Let us introduce two auxiliary operators $J$ and $S$.
Define the operator $J:H\to H_1\oplus...\oplus H_n$ by
$$
Jx:=(P_1 x,...,P_n x)^t,\qquad x\in H.
$$
Define the operator $S:H_1\oplus...\oplus H_n\to H$ by
$$
S(x_1,...,x_n)^t:=x_1+...+x_n,\qquad (x_1,...,x_n)^t\in H_1\oplus...\oplus H_n.
$$
It is easily seen that $J^*=S$.
It is clear that the system $H_1,...,H_n$ possesses the IBAP
$\Leftrightarrow$ the operator $J$ is surjective
$\Leftrightarrow$ the operator $J^*=S$ is an isomorphic embedding.

$(1)\Leftrightarrow (2)$
$S$ is an isomorphic embedding
$\Leftrightarrow$ $\ker(S)=\{0\}$ and $Ran(S)$ is closed in $H$
$\Leftrightarrow$ the subspaces $H_1,...,H_n$ are linearly independent
and their sum $H_1+...+H_n$ is closed in $H$.
Thus $(1)\Leftrightarrow (2)$.

$(1)\Leftrightarrow (3)$
$S$ is an isomorphic embedding
$\Leftrightarrow$ there exists a number $c>0$ such that
$\|S(x_1,...,x_n)^t\|\geqslant c\|(x_1,...,x_n)^t\|$ for every element
$(x_1,...,x_n)^t$ of the space $H_1\oplus...\oplus H_n$.
The last inequality can be rewritten as
$$
\|x_1+...+x_n\|\geqslant c\sqrt{\|x_1\|^2+...+\|x_n\|^2}.
$$
Thus $(1)\Leftrightarrow (3)$.

$(1)\Leftrightarrow (4)$
$S$ is an isomorphic embedding
$\Leftrightarrow$ the operator $S^* S$ is an isomorphism.
We have $S^* S=JS=G$.
Thus $(1)\Leftrightarrow (4)$.

$(2)\Rightarrow(5)$
This implication follows from \cite[Proposition 12.12]{EW06}.
For convenience of the reader we present the proof.
Assume that the subspaces $H_1,...,H_n$ are linearly independent
and their sum $H_1+...+H_n$ is closed in $H$.
Define the closed subspace $N:=H\ominus (H_1+...+H_n)$.
Define the operator $\varphi:H_1\oplus...\oplus H_n\oplus N\to H$ by
$$
\varphi(x_1,...,x_n,x)^t:=x_1+...+x_n+x, \qquad 
(x_1,...,x_n,x)^t\in H_1\oplus...\oplus H_n\oplus N.
$$
Since $H_1,...,H_n$ are linearly independent, we conclude that
$H_1,...,H_n,N$ are also linearly independent.
It follows that $\ker(\varphi)=\{0\}$.
Further, $Ran(\varphi)=H_1+...+H_n+N=H$.
Thus $\varphi$ is an isomorphism.
For every $i=1,...,n$ define the subspace 
$\widetilde{H}_i\subset H_1\oplus...\oplus H_n\oplus N$ by
$$
\widetilde{H}_i:=
\{0\}\oplus...\oplus\{0\}\oplus H_i\oplus\{0\}\oplus...\oplus\{0\}=
\{(0,...,0,x_i,0,...,0)|x_i\in H_i\}.
$$
Then $\widetilde{H}_1,...,\widetilde{H}_n$ are pairwise orthogonal and
$\varphi(\widetilde{H}_i)=H_i$, $i=1,...,n$.
Thus the system $(H;H_1,...,H_n)$ is isomorphic to the system
$(H_1\oplus...\oplus H_n\oplus N;\widetilde{H}_1,...,\widetilde{H}_n)$.

$(5)\Rightarrow (6)$
Assume that there exist a Hilbert space $K$ and pairwise orthogonal closed
subspaces $K_1,...,K_n$ of the space $K$ such that the system
$(H;H_1,...,H_n)$ is isomorphic to $(K;K_1,...,K_n)$.
Let $\varphi:H\to K$ be an isomorphism such that $\varphi(H_i)=K_i$,
$i=1,2,...,n$.
Define a new inner product $\langle\cdot,\cdot\rangle_0$ in $H$ by
$\langle x,y\rangle_0:=\langle\varphi(x),\varphi(y)\rangle_K$, $x,y\in H$.
It is clear that $\langle\cdot,\cdot\rangle_0$ is an inner product in $H$,
$\langle\cdot,\cdot\rangle_0$ is equivalent to the original inner product in $H$
and the subspaces $H_1,...,H_n$ are pairwise orthogonal in 
$\langle\cdot,\cdot\rangle_0$.

$(6)\Rightarrow(7)$
Assume that there exists an inner product $\langle\cdot,\cdot\rangle_0$ in $H$
which is equivalent to the original inner product in $H$ and such that
the subspaces $H_1,...,H_n$ are pairwise orthogonal in 
$(H;\langle\cdot,\cdot\rangle_0)$.
Let $E_i$ be the orthogonal projection onto $H_i$ in $(H;\langle\cdot,\cdot\rangle_0)$,
$i=1,...,n$.
Consider $E_1,...,E_n$ as operators from $H$ (with the original inner product)
to $H$ (with the original inner product).
It is clear that for every $i=1,...,n$ $E_i$ is a continuous linear projection
onto $H_i$ and $E_i E_j=0$ for each pair $i\neq j$.

$(7)\Rightarrow(2)$
Assume that there exist continuous linear projections $E_k:H\to H$, $k=1,...,n$,
such that $E_i E_j=0$ for every pair $i\neq j$ and $Ran(E_k)=H_k$ for
every $k=1,...,n$.
Let us show that $H_1,...,H_n$ are linearly independent.
Suppose that $x_1+...+x_n=0$, where $x_1\in H_1,...,x_n\in H_n$.
Consider arbitrary $i\in\{1,2,...,n\}$.
Then $E_i(x_1+...+x_n)=0$.
Since $E_i x_j=E_i E_j x_j=0$ for $j\neq i$ and $E_i x_i=x_i$,
we see that $x_i=0$.
Thus $H_1,...,H_n$ are linearly independent.
Let us show that $H_1+...+H_n$ is closed in $H$.
It is easy to see that $E:=E_1+...+E_n$ is a continuous linear projection
onto the subspace $H_1+...+H_n$.
Thus $H_1+...+H_n=\ker(I-E)$ is closed in $H$.

$(3)\Rightarrow(8)$
Assume that
$$
c:=\inf\{
\dfrac{\|x_1+...+x_n\|}{\sqrt{\|x_1\|^2+...+\|x_n\|^2}}\,|\,
x_1\in H_1,...,x_n\in H_n,
(x_1,...,x_n)\neq (0,...,0)\}>0.
$$
Then 
\begin{equation}\label{ineq:c}
\|x_1+...+x_n\|\geqslant c\sqrt{\|x_1\|^2+...+\|x_n\|^2}
\end{equation}
for arbitrary $x_1\in H_1,...,x_n\in H_n$.
Consider arbitrary $i\in\{1,...,n\}$.
If $x\in H_i$, $\|x\|=1$ and $y\in\sum_{j\neq i}H_j$, then inequality~\eqref{ineq:c}
implies that $\|x-y\|\geqslant c$.
It follows that $dist(x,\sum_{j\neq i}H_j)\geqslant c$ and consequently
$\delta(H_i,\sum_{j\neq i}H_j)\geqslant c>0$.

$(8)\Rightarrow(3)$
Assume that $\delta_i:=\delta(H_i,\sum_{j\neq i}H_j)>0$ for every $i=1,...,n$.
Consider arbitrary elements $x_1\in H_1,...,x_n\in H_n$.
Then $\|x_1+...+x_n\|\geqslant dist(x_i,\sum_{j\neq i}H_j)
\geqslant\delta_i\|x_i\|$, $i=1,...,n$.
Thus $\|x_i\|\leqslant\delta_i^{-1}\|x_1+...+x_n\|$, $i=1,...,n$
and consequently
$\|x_1\|^2+...+\|x_n\|^2\leqslant (\delta_1^{-2}+...+\delta_n^{-2})
\|x_1+...+x_n\|^2$.
It follows that
$$
\|x_1+...+x_n\|\geqslant\dfrac{1}{\sqrt{\delta_1^{-2}+...+\delta_n^{-2}}}
\sqrt{\|x_1\|^2+...+\|x_n\|^2}.
$$

$(5)\Rightarrow (9)$
Assume that there exist a Hilbert space $K$ and pairwise orthogonal closed
subspaces $K_1,...,K_n$ of the space $K$ such that the system
$(H;H_1,...,H_n)$ is isomorphic to $(K;K_1,...,K_n)$.
Let $\varphi:K\to H$ be an isomorphism such that 
$H_i=\varphi(K_i)$, $i=1,2,...,n$.
Define the closed subspace $N$ of the space $K$ by
$N:=K\ominus (K_1+...+K_n)$.
Then the subspaces $K_1,...,K_n,N$ are pairwise orthogonal and
their sum is equal to $K$.

Let us show that $\cap_{j\neq i}H_j^\bot=(\varphi^*)^{-1}(K_i+N)$,
$i=1,2,...,n$.
Consider arbitrary $i\in\{1,2,...,n\}$.
The subspace $\cap_{j\neq i}H_j^\bot$ consists of all elements $x\in H$
such that $x\bot H_j$ for every $j\neq i$.
This means that $\langle x,y\rangle=0$, $y\in H_j$, $j\neq i$
$\Leftrightarrow$
$\langle x,\varphi(z)\rangle=0$, $z\in K_j$, $j\neq i$
$\Leftrightarrow$
$\langle\varphi^*(x),z\rangle=0$, $z\in K_j$, $j\neq i$
$\Leftrightarrow$
$\varphi^*(x)\bot K_j$, $j\neq i$.
Now recall that the subspaces $K_1,...,K_n,N$ are pairwise orthogonal and
their sum is equal to $K$.
Therefore the latter condition on $x$ is equivalent to the inclusion
$\varphi^*(x)\in K_i+N$
$\Leftrightarrow$
$x\in(\varphi^*)^{-1}(K_i+N)$.
Thus $\cap_{j\neq i}H_j^\bot=(\varphi^*)^{-1}(K_i+N)$.
Now we see that 
$$
\sum_{i=1}^n\bigcap_{j\neq i}H_j^\bot=
\sum_{i=1}^n(\varphi^*)^{-1}(K_i+N)=
(\varphi^*)^{-1}(K_1+...+K_n+N)=
(\varphi^*)^{-1}(K)=H.
$$ 

$(9)\Rightarrow (10)$
Assume that $\sum_{i=1}^n\bigcap_{j\neq i}H_j^\bot=H$.
Consider arbitrary index $k\in\{1,2,...,n\}$.
Since $\sum_{i=1}^n\bigcap_{j\neq i}H_j^\bot\subset H_k^\bot+\cap_{j\neq k}H_j^\bot$,
we conclude that $H_k^\bot+\cap_{j\neq k}H_j^\bot=H$.

$(10)\Rightarrow (1)$
Assume that $H_i^\bot+\cap_{j\neq i}H_j^\bot=H$ for every $i=1,2,...,n$.
We have to show that the system $H_1,...,H_n$ possesses the IBAP.
Consider arbitrary elements $x_i\in H_i$, $i=1,2,...,n$.
Since $H_i^\bot+\cap_{j\neq i}H_j^\bot=H$, we conclude that there exist
elements $y_i\in H_i^\bot$ and $z_i\in\cap_{j\neq i}H_j^\bot$
such that $x_i=y_i+z_i$.
Then $z_i=x_i-y_i$ and consequently $P_i z_i=x_i$.
Also $P_j z_i=0$ for every $j\neq i$.
Define $z:=z_1+...+z_n$ .
It is clear that $P_i z=x_i$, $i=1,2,...,n$.
This means that the system of subspaces $H_1,...,H_n$ possesses the IBAP.
\end{proof}

\section{Further results on the IBAP}\label{S:further results}

\subsection{IBAP and Riesz families}

The following theorem is motivated by~\cite[Theorem~2.1]{T00}.

\begin{theorem}\label{T:IBAP and Riesz}
Let $H$ be a Hilbert space and $H_1,...,H_n$ be closed subspaces of $H$.
The following statements are true:

(1) Assume that the system of subspaces $H_1,...,H_n$ possesses the IBAP.
If $\{v^{(i)}_\alpha|\alpha\in M_i\}$ is a Riesz family in $H_i$, $i=1,2,...,n$,
then the family $\{v^{(i)}_\alpha|i=1,2,...,n, \alpha\in M_i\}$ is also a Riesz family.

(2) Assume that for every $i=1,2,...,n$ a family of elements 
$\{v^{(i)}_\alpha|\alpha\in M_i\}$ of the subspace $H_i$
is such that the linear span of the family is dense in $H_i$.
If the family of elements $\{v^{(i)}_\alpha|i=1,2,...,n, \alpha\in M_i\}$ 
is a Riesz family, then the system of subspaces $H_1,...,H_n$ possesses the IBAP. 
\end{theorem}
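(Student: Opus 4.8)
The plan is to derive both parts directly from the equivalence $(1)\Leftrightarrow(3)$ in Theorem~\ref{T:main}, combined with the quantitative description of Riesz families recalled in Section~2: a family $\{w_\beta\,|\,\beta\in M\}$ is a Riesz family precisely when there are constants $\varepsilon,C>0$ with $\varepsilon(\sum_{\beta\in F}|a_\beta|^2)^{1/2}\le\|\sum_{\beta\in F}a_\beta w_\beta\|\le C(\sum_{\beta\in F}|a_\beta|^2)^{1/2}$ for every finite $F\subset M$ and all scalars $a_\beta$. Throughout, the combined family $\{v^{(i)}_\alpha\,|\,i=1,\dots,n,\ \alpha\in M_i\}$ is to be understood as indexed by the disjoint union $\bigsqcup_i M_i$, so that its coefficients are unconstrained.

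For part (1), fix constants $\varepsilon_i,C_i>0$ witnessing that $\{v^{(i)}_\alpha\,|\,\alpha\in M_i\}$ is a Riesz family in $H_i$, and let $c>0$ be the constant furnished by Theorem~\ref{T:main}(3) for the system $H_1,\dots,H_n$. Given a finite subset $F$ of the combined index set, write $F=\bigcup_i F_i$ with $F_i\subset M_i$, set $x_i:=\sum_{\alpha\in F_i}a^{(i)}_\alpha v^{(i)}_\alpha\in H_i$, so that $\sum_{\beta\in F}a_\beta v_\beta=x_1+\dots+x_n$. The upper estimate is elementary: by the triangle inequality and the Cauchy--Schwarz inequality in $\mathbb{R}^n$, $\|x_1+\dots+x_n\|\le\sum_i\|x_i\|\le\sum_iC_i(\sum_{\alpha\in F_i}|a^{(i)}_\alpha|^2)^{1/2}\le(\sum_iC_i^2)^{1/2}(\sum_{\beta\in F}|a_\beta|^2)^{1/2}$. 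The lower estimate is exactly where the IBAP is used: $\|x_1+\dots+x_n\|\ge c(\sum_i\|x_i\|^2)^{1/2}\ge c(\sum_i\varepsilon_i^2\sum_{\alpha\in F_i}|a^{(i)}_\alpha|^2)^{1/2}\ge c(\min_i\varepsilon_i)(\sum_{\beta\in F}|a_\beta|^2)^{1/2}$. Since the resulting constants do not depend on $F$, the combined family is a Riesz family.

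For part (2), let $\varepsilon,C>0$ witness that the combined family is a Riesz family. Restricting the two-sided inequality to finite subsets $F_i\subset M_i$ gives in particular $\|y\|\le C(\sum_{\alpha\in F_i}|b_\alpha|^2)^{1/2}$ whenever $y=\sum_{\alpha\in F_i}b_\alpha v^{(i)}_\alpha$. I would first prove the bound $\|x_1+\dots+x_n\|\ge(\varepsilon/C)(\sum_i\|x_i\|^2)^{1/2}$ for $x_i$ lying in the linear span $L_i$ of $\{v^{(i)}_\alpha\,|\,\alpha\in M_i\}$: writing $x_i=\sum_{\alpha\in F_i}a^{(i)}_\alpha v^{(i)}_\alpha$ and $F=\bigcup_iF_i$, one has $\sum_i\|x_i\|^2\le C^2\sum_{\beta\in F}|a_\beta|^2$, while $\|x_1+\dots+x_n\|=\|\sum_{\beta\in F}a_\beta v_\beta\|\ge\varepsilon(\sum_{\beta\in F}|a_\beta|^2)^{1/2}$; combining these gives the claim on the spans. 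Then I would pass to arbitrary $x_i\in H_i$ using the hypothesis that $L_i$ is dense in $H_i$: approximate each $x_i$ by a sequence from $L_i$ and let it tend to $x_i$, invoking continuity of the norm and of vector addition to preserve the inequality in the limit. This is precisely condition (3) of Theorem~\ref{T:main} with $c=\varepsilon/C$, so $H_1,\dots,H_n$ possesses the IBAP.

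I expect no deep obstacle; the only points needing care are bookkeeping --- keeping the disjoint-union indexing straight so the coefficients of the combined family are genuinely free --- and, in part (2), the density approximation, where one must approximate simultaneously in all $n$ coordinates and then take limits, using only continuity (no completeness or compactness beyond what is at hand). The conceptual content is simply that "combining Riesz families" succeeds exactly when the ambient subspaces satisfy the quantitative stability expressed by Theorem~\ref{T:main}(3), i.e., the IBAP.
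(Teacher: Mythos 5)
Your proposal is correct and follows essentially the same route as the paper: both parts are reduced to the quantitative characterization $(1)\Leftrightarrow(3)$ of Theorem~\ref{T:main}, with the same constants ($c\min_i\varepsilon_i$ and $(\sum_i C_i^2)^{1/2}$ in part (1), $\varepsilon C^{-1}$ in part (2)) and the same density-plus-continuity argument to pass from the linear spans to the closed subspaces.
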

\begin{proof}
(1)
By Theorem~\ref{T:main}, $(1)\Leftrightarrow (3)$, there exists a number
$\varepsilon>0$ such that
$$
\|x_1+...+x_n\|\geqslant\varepsilon(\|x_1\|^2+...+\|x_n\|^2)^{1/2}
$$
for arbitrary elements $x_1\in H_1,...,x_n\in H_n$.
Since $\{v^{(i)}_\alpha|\alpha\in M_i\}$ is a Riesz family, we conclude that there
exist numbers $\varepsilon_i>0$ and $C_i>0$ such that
$$
\varepsilon_i
\left(\sum_{\alpha\in F_i}|a^{(i)}_\alpha|^2\right)^{1/2}\leqslant
\|\sum_{\alpha\in F_i}a^{(i)}_\alpha v^{(i)}_\alpha\|\leqslant
C_i 
\left(\sum_{\alpha\in F_i}|a^{(i)}_\alpha|^2\right)^{1/2}
$$
for every finite subset $F_i\subset M_i$ and each collection of scalars
$\{a^{(i)}_\alpha|\alpha\in F_i\}$.
Let us show that the family
$\{v^{(i)}_\alpha|i=1,2,...,n, \alpha\in M_i\}$
is a Riesz family.
Consider arbitrary finite subfamily of the family
$\{v^{(i)}_\alpha|(i,\alpha)\in\cup_{k=1}^n\{k\}\times M_k\}$.
The subfamily has a form
$\{v^{(i)}_\alpha|(i,\alpha)\in\cup_{k=1}^n\{k\}\times F_k\}$,
where $F_k$ is a finite subset of $M_k$, $k=1,2,...,n$.
For arbitrary collection of scalars
$\{a^{(i)}_\alpha|(i,\alpha)\in\cup_{k=1}^n\{k\}\times F_k\}$
we have
\begin{align*}
&\|\sum_{\alpha\in F_1}a^{(1)}_\alpha v^{(1)}_\alpha+...+
\sum_{\alpha\in F_n}a^{(n)}_\alpha v^{(n)}_\alpha\|\geqslant\\
&\geqslant
\varepsilon\left(\|\sum_{\alpha\in F_1}a^{(1)}_\alpha v^{(1)}_\alpha\|^2+...+
\|\sum_{\alpha\in F_n}a^{(n)}_\alpha v^{(n)}_\alpha\|^2\right)^{1/2}\geqslant\\
&\geqslant
\varepsilon\left(\varepsilon_1^2\sum_{\alpha\in F_1}|a^{(1)}_\alpha|^2+...+
\varepsilon_n^2\sum_{\alpha\in F_n}|a^{(n)}_\alpha|^2\right)^{1/2}\geqslant\\
&\geqslant
\varepsilon\min\{\varepsilon_1,...,\varepsilon_n\}
\left(\sum_{\alpha\in F_1}|a^{(1)}_\alpha|^2+...+
\sum_{\alpha\in F_n}|a^{(n)}_\alpha|^2\right)^{1/2}.
\end{align*}
Also
\begin{align*}
&\|\sum_{\alpha\in F_1}a^{(1)}_\alpha v^{(1)}_\alpha+...+
\sum_{\alpha\in F_n}a^{(n)}_\alpha v^{(n)}_\alpha\|\leqslant\\
&\leqslant
\|\sum_{\alpha\in F_1}a^{(1)}_\alpha v^{(1)}_\alpha\|+...+
\|\sum_{\alpha\in F_n}a^{(n)}_\alpha v^{(n)}_\alpha\|\leqslant\\
&\leqslant
C_1\left(\sum_{\alpha\in F_1}|a^{(1)}_\alpha|^2\right)^{1/2}+...+
C_n\left(\sum_{\alpha\in F_n}|a^{(n)}_\alpha|^2\right)^{1/2}\leqslant\\
&\leqslant
(C_1^2+...+C_n^2)^{1/2}
\left(\sum_{\alpha\in F_1}|a^{(1)}_\alpha|^2+...+
\sum_{\alpha\in F_n}|a^{(n)}_\alpha|^2\right)^{1/2}.
\end{align*}
Thus $\{v^{(i)}_\alpha|(i,\alpha)\in\cup_{k=1}^n\{k\}\times M_k\}$
is a Riesz family.

(2)
Since the family of elements $\{v^{(i)}_\alpha|i=1,2,...,n, \alpha\in M_i\}$ 
is a Riesz family, we conclude that there exist numbers $\varepsilon>0$ and $C>0$
such that
\begin{align*}
\varepsilon
\left(\sum_{\alpha\in F_1}|a^{(1)}_\alpha|^2+...+
\sum_{\alpha\in F_n}|a^{(n)}_\alpha|^2\right)^{1/2}&\leqslant
\|\sum_{\alpha\in F_1}a^{(1)}_\alpha v^{(1)}_\alpha+...+
\sum_{\alpha\in F_n}a^{(n)}_\alpha v^{(n)}_\alpha\|\leqslant\\
&\leqslant C
\left(\sum_{\alpha\in F_1}|a^{(1)}_\alpha|^2+...+
\sum_{\alpha\in F_n}|a^{(n)}_\alpha|^2\right)^{1/2}
\end{align*}
for every finite subsets $F_1\subset M_1,...,F_n\subset M_n$
and each collection of scalars
$\{a^{(i)}_\alpha|i=1,2,...,n,\alpha\in F_i\}$.
It follows that for every $i=1,2,...,n$ the family
$\{v^{(i)}_\alpha|\alpha\in M_i\}$ is also a Riesz family with
lower bound $\varepsilon$ and upper bound $C$, i.e.,
$$
\varepsilon
\left(\sum_{\alpha\in F_i}|a^{(i)}_\alpha|^2\right)^{1/2}\leqslant
\|\sum_{\alpha\in F_i}a^{(i)}_\alpha v^{(i)}_\alpha\|\leqslant
C 
\left(\sum_{\alpha\in F_i}|a^{(i)}_\alpha|^2\right)^{1/2}
$$
for every finite subset $F_i\subset M_i$ and each collection of scalars
$\{a^{(i)}_\alpha|\alpha\in F_i\}$.
Denote by $H_i^0$ the linear span of $\{v^{(i)}_\alpha|\alpha\in M_i\}$.
Consider arbitrary elements $x_1\in H_1^0,...,x_n\in H_n^0$.
Then $x_i=\sum_{\alpha\in F_i}a^{(i)}_\alpha v^{(i)}_\alpha$
for some finite subset $F_i\subset M_i$ and some
collection of scalars $\{a^{(i)}_\alpha|\alpha\in F_i\}$, $i=1,2,...,n$.
We have
\begin{align*}
\|x_1+...+x_n\|&=
\|\sum_{\alpha\in F_1}a^{(1)}_\alpha v^{(1)}_\alpha+...+
\sum_{\alpha\in F_n}a^{(n)}_\alpha v^{(n)}_\alpha\|\geqslant\\
&\geqslant
\varepsilon
\left(\sum_{\alpha\in F_1}|a^{(1)}_\alpha|^2+...+
\sum_{\alpha\in F_n}|a^{(n)}_\alpha|^2\right)^{1/2}\geqslant\\
&\geqslant
\varepsilon
\left(C^{-2}\|\sum_{\alpha\in F_1}a^{(1)}_\alpha v^{(1)}_\alpha\|^2+...+
C^{-2}\|\sum_{\alpha\in F_n}a^{(n)}_\alpha v^{(n)}_\alpha\|^2\right)^{1/2}=\\
&=\varepsilon C^{-1}(\|x_1\|^2+...+\|x_n\|^2)^{1/2}.
\end{align*}
Therefore
\begin{equation}\label{ineq:eC-1}
\|x_1+...+x_n\|\geqslant\varepsilon C^{-1}(\|x_1\|^2+...+\|x_n\|^2)^{1/2}
\end{equation}
for arbitrary $x_1\in H_1^0,...,x_n\in H_n^0$.
Since $H_i^0$ is dense in $H_i$ for every $i=1,2,...,n$, we
conclude that inequality~\eqref{ineq:eC-1} holds for arbitrary
$x_1\in H_1,...,x_n\in H_n$.
By Theorem~\ref{T:main}, $(1)\Leftrightarrow (3)$,
the system of subspaces $H_1,...,H_n$ possesses the IBAP.
\end{proof}

\subsection{IBAP and eigenspaces}

\begin{theorem}\label{T:eigenspaces}
Let $H$ be a Hilbert space.
The following statements are true:

(1) if $A:H\to H$ is a continuous linear operator and
scalars $\lambda_1,...,\lambda_n$ are pairwise distinct, then
the system of eigenspaces $H_k:=\ker(A-\lambda_k I)$, $k=1,...,n$
possesses the IBAP.

(2) if a system of closed subspaces $H_1,...,H_n$ possesses the IBAP and
scalars $\lambda_1,...,\lambda_n$ are pairwise distinct, then
there exists a continuous linear operator $A:H\to H$ such that
$H_k=\ker(A-\lambda_k I)$, $k=1,...,n$.
\end{theorem}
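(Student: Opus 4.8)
The plan is to reduce both parts to the main characterization theorem, Theorem~\ref{T:main}, exploiting a different one of its equivalent conditions in each part.

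For part~(1) I would verify condition~(3) of Theorem~\ref{T:main}. Let $p_1,...,p_n$ be the Lagrange interpolation polynomials for the nodes $\lambda_1,...,\lambda_n$, so that $\deg p_k\leqslant n-1$ and $p_k(\lambda_j)=\delta_{jk}$ for all $j,k$. Since $A$ is continuous, each $p_k(A):H\to H$ is a continuous linear operator; put $C_k:=\|p_k(A)\|<\infty$. If $x_j\in H_j=\ker(A-\lambda_j I)$, then $Ax_j=\lambda_j x_j$, hence $q(A)x_j=q(\lambda_j)x_j$ for every polynomial $q$; applying $p_k(A)$ to the sum therefore recovers the $k$-th summand,
\[
p_k(A)(x_1+...+x_n)=\sum_{j=1}^n p_k(\lambda_j)x_j=x_k .
\]
Consequently $\|x_k\|\leqslant C_k\|x_1+...+x_n\|$ for every $k$, and so
\[
\|x_1+...+x_n\|\geqslant\Big(\sum_{k=1}^n C_k^2\Big)^{-1/2}\big(\|x_1\|^2+...+\|x_n\|^2\big)^{1/2}
\]
for arbitrary $x_1\in H_1,...,x_n\in H_n$. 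As the eigenspaces $H_k$ are closed (kernels of continuous operators), Theorem~\ref{T:main}, $(3)\Rightarrow(1)$, yields the IBAP. I do not expect a genuine obstacle here; the point to keep in mind is that $p_k(A)$ need not be a projection onto $H_k$ (the operator $A$ need not be ``diagonalizable''), but condition~(3) asks only for the displayed bound, not for a projection.

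For part~(2) I would invoke condition~(7) of Theorem~\ref{T:main}: since $H_1,...,H_n$ possess the IBAP, there exist continuous linear projections $E_1,...,E_n:H\to H$ with $Ran(E_k)=H_k$ and $E_iE_j=0$ for $i\neq j$. As in the proof of $(7)\Rightarrow(2)$, $E:=E_1+...+E_n$ is then a continuous linear projection, and a direct computation shows that $E_1,...,E_n,I-E$ are mutually annihilating idempotents summing to $I$; that is, $E_iE_j=0$ for $i\neq j$, $(I-E)E_k=E_k(I-E)=0$, $(I-E)^2=I-E$, and $E_1+...+E_n+(I-E)=I$. Choose a scalar $\mu\notin\{\lambda_1,...,\lambda_n\}$ (possible, since the scalar field is infinite) and set
\[
A:=\lambda_1 E_1+...+\lambda_n E_n+\mu(I-E).
\]
Then $A$ is continuous, and for each $k$,
\[
A-\lambda_k I=\sum_{j=1}^n(\lambda_j-\lambda_k)E_j+(\mu-\lambda_k)(I-E).
\]
Writing an arbitrary $x\in H$ as $x=E_1x+...+E_nx+(I-E)x$ and applying $E_m$ and $I-E$ to $(A-\lambda_k I)x$ gives $E_m(A-\lambda_k I)x=(\lambda_m-\lambda_k)E_mx$ and $(I-E)(A-\lambda_k I)x=(\mu-\lambda_k)(I-E)x$; hence $(A-\lambda_k I)x=0$ forces $E_mx=0$ for all $m\neq k$ and $(I-E)x=0$, i.e.\ $x=E_kx\in H_k$. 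The reverse inclusion $H_k\subseteq\ker(A-\lambda_k I)$ is immediate, so $\ker(A-\lambda_k I)=H_k$ for every $k$, as required.

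The one delicate point in part~(2) is the extra term $\mu(I-E)$: without it, the operator $A_0:=\lambda_1E_1+...+\lambda_nE_n$ would satisfy $\ker(A_0-\lambda_k I)=H_k$ only when $\lambda_k\neq0$, because $A_0$ acts as $0$ on the complementary subspace $Ran(I-E)$. Installing the eigenvalue $\mu\notin\{\lambda_1,...,\lambda_n\}$ on that complement repairs this defect uniformly, without assuming anything about the $\lambda_k$ beyond their being pairwise distinct.
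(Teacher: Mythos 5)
Your proposal is correct and follows essentially the same route as the paper: in part (1) your operators $p_k(A)$ are exactly the paper's $T_i=\prod_{j\neq i}(A-\lambda_j I)/\prod_{j\neq i}(\lambda_i-\lambda_j)$ (the Lagrange polynomials evaluated at $A$), the only difference being that you verify condition (3) of Theorem~\ref{T:main} directly instead of the inclination condition (8). In part (2) your construction $A=\lambda_1E_1+\dots+\lambda_nE_n+\mu(I-E)$ coincides in substance with the paper's $A=\lambda_1E_1+\dots+\lambda_{n+1}E_{n+1}$, the paper obtaining the complementary projection by applying condition (7) to the enlarged system $H_1,\dots,H_n,H\ominus(H_1+\dots+H_n)$ rather than using $I-E$.
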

\begin{proof}
(1) To prove that the system $H_1,...,H_n$ possesses the IBAP we will
use Theorem~\ref{T:main}, $(1)\Leftrightarrow (8)$.
Consider arbitrary index $i\in\{1,...,n\}$.
Let us prove that $\delta(H_i,\sum_{j\neq i}H_j)>0$.
If $H_i=\{0\}$, then this is clear.
Assume that $H_i\neq\{0\}$.
Consider the operator $T_i:=\prod_{j\neq i}(A-\lambda_j I)/
\prod_{j\neq i}(\lambda_i-\lambda_j)$.
It is clear that $T_i x=x$ for $x\in H_i$ and
$T_i y=0$ for $y\in H_j$, $j\neq i$.
Now consider arbitrary $x\in H_i$ with $\|x\|=1$.
For every $y\in\sum_{j\neq i}H_j$ we have
$$
\|x-y\|\geqslant\dfrac{\|T_i(x-y)\|}{\|T_i\|}=\dfrac{\|x-0\|}{\|T_i\|}=
\dfrac{1}{\|T_i\|}.
$$
It follows that $dist(x,\sum_{j\neq i}H_j)\geqslant 1/\|T_i\|$,
and consequently $\delta(H_i,\sum_{j\neq i}H_j)\geqslant 1/\|T_i\|>0$.

(2)
By Theorem~\ref{T:main}, $(1)\Leftrightarrow (2)$
the subspaces $H_1,...,H_n$ are linearly independent and their sum
is closed in $H$.
Define the closed subspace $H_{n+1}:=H\ominus (H_1+...+H_n)$
and consider the system of subspaces $H_1,...,H_n,H_{n+1}$.
The subspaces are linearly independent and their sum is equal to $H$.
By Theorem~\ref{T:main}, $(2)\Leftrightarrow (7)$, there exist
continuous linear projections $E_1,...,E_{n+1}$ such that
$E_i E_j=0$ for every pair $i\neq j$ and $H_k=Ran(E_k)$, $k=1,...,n+1$.
Take arbitrary scalar $\lambda_{n+1}$ which is different from 
$\lambda_1,...,\lambda_n$ and consider the operator
$A:=\lambda_1 E_1+...+\lambda_{n+1}E_{n+1}$.
In fact, if $x\in H$ and $x=x_1+...+x_{n+1}$, where $x_i\in H_i$, $i=1,...,n+1$,
then $Ax=\lambda_1 x_1+...+\lambda_{n+1}x_{n+1}$.
Thus $\ker(A-\lambda_k I)=H_k$ for $k=1,...,n,n+1$.
\end{proof}

\subsection{IBAP and root subspaces}

\begin{theorem}\label{T:root subspaces}
Let $H$ be a Hilbert space,
$A:H\to H$ be a continuous linear operator,
$\lambda_1,...,\lambda_n$ be pairwise distinct scalars,
and $m_1,...,m_n$ be natural numbers.
Then the system of subspaces $H_k:=\ker(A-\lambda_k I)^{m_k}$,
$k=1,...,n$ possesses the IBAP.
\end{theorem}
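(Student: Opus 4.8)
The plan is to adapt the argument used for Theorem~\ref{T:eigenspaces}(1): I will verify condition~(8) of Theorem~\ref{T:main}, namely $\delta(H_i,\sum_{j\neq i}H_j)>0$ for every $i=1,\dots,n$, by producing for each $i$ a continuous linear operator $T_i\colon H\to H$ that acts as the identity on $H_i$ and as zero on every $H_j$ with $j\neq i$. Once such a $T_i$ is available, the estimate is the same as in Theorem~\ref{T:eigenspaces}: if $H_i=\{0\}$ the inclination is $+\infty$ by convention; otherwise $T_i\neq 0$ (it is the identity on a nonzero subspace), and for $x\in H_i$ with $\|x\|=1$ and any $y\in\sum_{j\neq i}H_j$ one has $T_i(x-y)=x$, hence $\|x-y\|\geqslant\|x\|/\|T_i\|=1/\|T_i\|$, so $\delta(H_i,\sum_{j\neq i}H_j)\geqslant 1/\|T_i\|>0$.

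The key step is the construction of $T_i$; the only new ingredient compared with the simple-eigenvalue case is a polynomial identity. Fix $i\in\{1,\dots,n\}$ and set $q_i(t):=\prod_{j\neq i}(t-\lambda_j)^{m_j}$. The polynomials $(t-\lambda_i)^{m_i}$ and $q_i(t)$ are coprime over the scalar field, since a common irreducible factor would force $\lambda_j=\lambda_i$ for some $j\neq i$. By Bézout's identity there are polynomials $a_i(t),b_i(t)$ with $a_i(t)(t-\lambda_i)^{m_i}+b_i(t)q_i(t)=1$. Let $p_i(t):=b_i(t)q_i(t)$, so that also $p_i(t)=1-a_i(t)(t-\lambda_i)^{m_i}$, and put $T_i:=p_i(A)$; this is a continuous linear operator, being a polynomial in the continuous operator $A$. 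From $p_i(t)=1-a_i(t)(t-\lambda_i)^{m_i}$ we get $T_ix=x-a_i(A)(A-\lambda_i I)^{m_i}x=x$ for every $x\in H_i=\ker(A-\lambda_i I)^{m_i}$. On the other hand $p_i(t)=b_i(t)q_i(t)$ is divisible by $(t-\lambda_j)^{m_j}$ for each fixed $j\neq i$; writing $p_i(t)=(t-\lambda_j)^{m_j}c_{ij}(t)$ and using that polynomials in $A$ commute, we obtain $T_iy=c_{ij}(A)(A-\lambda_j I)^{m_j}y=0$ for $y\in H_j$, and hence $T_iy=0$ for every $y\in\sum_{j\neq i}H_j$.

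Combining the two steps and invoking the equivalence $(1)\Leftrightarrow(8)$ of Theorem~\ref{T:main} finishes the proof. (Alternatively, the same operators $T_i$ feed directly into $(1)\Leftrightarrow(3)$: from $\|x_i\|=\|T_i(x_1+\dots+x_n)\|\leqslant\|T_i\|\,\|x_1+\dots+x_n\|$ one gets $\|x_1+\dots+x_n\|\geqslant(\sum_{i=1}^n\|T_i\|^2)^{-1/2}(\|x_1\|^2+\dots+\|x_n\|^2)^{1/2}$.) I do not expect any genuine obstacle: the entire content is the elementary polynomial coprimality argument, and after that the proof literally repeats the inclination estimate of Theorem~\ref{T:eigenspaces}. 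The only point deserving a word of care is that each $H_k=\ker(A-\lambda_k I)^{m_k}$ is closed, being the kernel of the continuous operator $(A-\lambda_k I)^{m_k}$, so that Theorem~\ref{T:main} is indeed applicable.
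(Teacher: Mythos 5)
Your proposal is correct and follows essentially the same route as the paper: build, via a Bézout/coprimality identity for the polynomials $(t-\lambda_j)^{m_j}$, operators $T_i=p_i(A)$ acting as the identity on $H_i$ and as zero on each $H_j$, $j\neq i$, and then conclude $\delta(H_i,\sum_{j\neq i}H_j)\geqslant 1/\|T_i\|>0$ and invoke Theorem~\ref{T:main}, $(1)\Leftrightarrow(8)$. The only (immaterial) difference is that you use a separate two-polynomial Bézout identity for each $i$, while the paper uses the single identity $\sum_{i=1}^n p_i(\lambda)\prod_{j\neq i}(\lambda-\lambda_j)^{m_j}=1$ giving $T_1+\dots+T_n=I$.
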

\begin{proof}
To prove that the system $H_1,...,H_n$ possesses the IBAP we will
use Theorem~\ref{T:main}, $(1)\Leftrightarrow (8)$.
Consider the polynomials $\prod_{j\neq i}(\lambda-\lambda_j)^{m_j}$,
$i=1,...,n$.
Since the greatest common divisor of these polynomials is equal to $1$,
we conclude that there exist polynomials $p_1(\lambda),...,p_n(\lambda)$
such that 
$\sum_{i=1}^n p_i(\lambda)\prod_{j\neq i}(\lambda-\lambda_j)^{m_j}=1$.
Define the operators $T_i:=p_i(A)\prod_{j\neq i}(A-\lambda_j I)^{m_j}$,
$i=1,...,n$.
Then $T_1+...+T_n=I$ and $T_i|_{H_j}=0$ for every pair $j\neq i$.
Consequently, $T_i x=x$ for $x\in H_i$.
Similarly to the proof of Theorem~\ref{T:eigenspaces}, (1),
one can show that $\delta(H_i,\sum_{j\neq i}H_j)\geqslant 1/\|T_i\|>0$.
\end{proof}

\subsection{Stability of the IBAP}

\begin{theorem}\label{T:stability}
Let $H$ be a Hilbert space and $H_1,...,H_n$ be closed subspaces of $H$
such that $H_k\neq\{0\}$ for some $k$.
Suppose that the system of subspaces $H_1,...,H_n$ possesses the IBAP.
Set
$$
c:=\inf\{
\dfrac{\|x_1+...+x_n\|}{\sqrt{\|x_1\|^2+...+\|x_n\|^2}}\,|\,
x_1\in H_1,...,x_n\in H_n,
(x_1,...,x_n)\neq (0,...,0)\}.
$$
If a system of closed subspaces $H_1',...,H_n'$ is such that
$$
(\theta(H_1,H_1'))^2+...+(\theta(H_n,H_n'))^2<c^2,
$$
then the system $H_1',...,H_n'$ also possesses the IBAP.
\end{theorem}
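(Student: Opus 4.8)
The plan is to use the characterization of the IBAP in terms of the quantity from Theorem~\ref{T:main}, $(1)\Leftrightarrow(3)$, together with the standard estimate that orthogonal projections onto nearby subspaces move vectors only slightly. Concretely, I want to show that the constant
$$
c':=\inf\Big\{\frac{\|x_1'+\dots+x_n'\|}{\sqrt{\|x_1'\|^2+\dots+\|x_n'\|^2}}\,\Big|\,x_i'\in H_i',\ (x_1',\dots,x_n')\neq(0,\dots,0)\Big\}
$$
is strictly positive, and then invoke Theorem~\ref{T:main} to conclude that $H_1',\dots,H_n'$ possesses the IBAP. First I would record that since the IBAP holds for $H_1,\dots,H_n$ and some $H_k\neq\{0\}$, the constant $c$ is strictly positive (this is exactly $(1)\Rightarrow(3)$ of Theorem~\ref{T:main}), and the hypothesis $(\theta(H_1,H_1'))^2+\dots+(\theta(H_n,H_n'))^2<c^2$ is therefore a genuine smallness condition.

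The key step is the perturbation estimate. Let $P_k,P_k'$ be the orthogonal projections onto $H_k,H_k'$. For $x_k'\in H_k'$ I would write $x_k'=P_k x_k' + (P_k'-P_k)x_k'$, noting $\|(P_k'-P_k)x_k'\|\le\theta(H_k,H_k')\|x_k'\|$. Setting $x_k:=P_k x_k'\in H_k$ and $d_k:=(P_k'-P_k)x_k'$, so that $x_k'=x_k+d_k$, I get $x_1'+\dots+x_n'=(x_1+\dots+x_n)+(d_1+\dots+d_n)$. Hence
$$
\|x_1'+\dots+x_n'\|\ \ge\ \|x_1+\dots+x_n\|-\|d_1+\dots+d_n\|\ \ge\ c\,\Big(\sum_k\|x_k\|^2\Big)^{1/2}-\sum_k\|d_k\|,
$$
using the definition of $c$ on the $x_k\in H_k$. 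Now $\sum_k\|d_k\|\le\sum_k\theta_k\|x_k'\|$ where $\theta_k:=\theta(H_k,H_k')$; by Cauchy--Schwarz this is at most $(\sum_k\theta_k^2)^{1/2}(\sum_k\|x_k'\|^2)^{1/2}$. For the other term I need a lower bound on $(\sum_k\|x_k\|^2)^{1/2}$ in terms of $(\sum_k\|x_k'\|^2)^{1/2}$: since $\|x_k'-x_k\|=\|d_k\|\le\theta_k\|x_k'\|$, we have $\|x_k\|\ge(1-\theta_k)\|x_k'\|$ when $\theta_k<1$; more cleanly, $\|x_k'\|\le\|x_k\|+\|d_k\|\le\|x_k\|+\theta_k\|x_k'\|$ is awkward, so instead I would just estimate $\|x_k\|\ge\|x_k'\|-\|d_k\|$ and then handle the bookkeeping, or — the slicker route — expand directly: $\|x_1'+\dots+x_n'\| \ge c(\sum\|x_k\|^2)^{1/2} - (\sum\theta_k^2)^{1/2}(\sum\|x_k'\|^2)^{1/2}$, and separately bound $(\sum\|x_k\|^2)^{1/2}\ge(\sum\|x_k'\|^2)^{1/2}-(\sum\|d_k\|^2)^{1/2}\ge(\sum\|x_k'\|^2)^{1/2}-(\sum\theta_k^2\|x_k'\|^2)^{1/2}\ge(1-\max_k\theta_k)(\sum\|x_k'\|^2)^{1/2}$.

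Assembling these, with $s:=(\sum_k\theta_k^2)^{1/2}<c$, I would obtain something of the shape
$$
\|x_1'+\dots+x_n'\|\ \ge\ (c-s)\Big(\sum_k\|x_k'\|^2\Big)^{1/2}
$$
after the elementary manipulations (the cross terms from $c(\sum\|x_k\|^2)^{1/2}$ versus $(\sum\|x_k'\|^2)^{1/2}$ combine favourably because $s<c$ forces $\max_k\theta_k\le s<c\le 1$, so all the $(1-\theta_k)$ factors are positive). Since $c-s>0$, this shows $c'>0$, and by Theorem~\ref{T:main} the system $H_1',\dots,H_n'$ possesses the IBAP. The main obstacle is purely computational: getting the constants to line up so that the final lower bound is manifestly positive; the conceptual content is entirely in the projection-perturbation inequality $\|(P_k-P_k')x\|\le\theta(H_k,H_k')\|x\|$ combined with the quadratic characterization $(3)$ of the IBAP. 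One should also dispatch the trivial degenerate case: if every $H_k'=\{0\}$ the IBAP holds vacuously, and otherwise the infimum defining $c'$ is over a nonempty set, so positivity of $c'$ is exactly what is needed.
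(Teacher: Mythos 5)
There is a genuine gap, and it is exactly at the point you defer as ``purely computational''. With your decomposition $x_k:=P_kx_k'$, $d_k:=(P_k'-P_k)x_k'$, the two estimates you have are $\|x_1'+\dots+x_n'\|\geqslant c\bigl(\sum_k\|x_k\|^2\bigr)^{1/2}-s\bigl(\sum_k\|x_k'\|^2\bigr)^{1/2}$ with $s:=\bigl(\sum_k\theta_k^2\bigr)^{1/2}$, and $\bigl(\sum_k\|x_k\|^2\bigr)^{1/2}\geqslant\bigl(1-\max_k\theta_k\bigr)\bigl(\sum_k\|x_k'\|^2\bigr)^{1/2}$ (even using the orthogonality $\|x_k\|^2=\|x_k'\|^2-\|d_k\|^2$ you only improve this to the factor $\sqrt{1-\max_k\theta_k^2}$). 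Assembling them gives the lower bound $\bigl(c(1-\max_k\theta_k)-s\bigr)\bigl(\sum_k\|x_k'\|^2\bigr)^{1/2}$, not $(c-s)\bigl(\sum_k\|x_k'\|^2\bigr)^{1/2}$: the cross terms do \emph{not} combine favourably. Under the hypothesis $s<c$ this constant can be negative; e.g.\ take pairwise orthogonal $H_k$ (so $c=1$) and concentrate the perturbation in one subspace with $\theta_k=s=0.99<c$, giving $c\sqrt{1-\max_k\theta_k^2}-s\approx 0.14-0.99<0$. So your route, as written, only proves the theorem under a strictly stronger smallness assumption such as $s<c/(1+c)$, and the positivity of $c'$ in the full range $s<c$ is not established.

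The missing idea is to run the perturbation in the opposite direction, and this requires one more ingredient, namely Lemma~\ref{L:gap<1}: if $\theta(H_k,H_k')<1$ then $P_k'(H_k)=H_k'$. This is how the paper argues: it considers the operator $S+T$ on $H_1\oplus\dots\oplus H_n$ sending $(x_1,\dots,x_n)^t$ to $\sum_k P_k'x_k$, shows $\|(S+T)v\|\geqslant(c-s)\|v\|$ (so $S+T$ is an isomorphic embedding, the estimate now being in terms of the \emph{old} components $x_k\in H_k$, where it is clean), and uses the lemma to see that $S+T$ maps the $k$-th coordinate subspace \emph{onto} $H_k'$; linear independence of $H_1',\dots,H_n'$ and closedness of their sum follow, and the conclusion comes from Theorem~\ref{T:main}, $(1)\Leftrightarrow(2)$. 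Equivalently, in your language: given $x_k'\in H_k'$, do not project it onto $H_k$; instead use the lemma to choose $x_k\in H_k$ with $P_k'x_k=x_k'$, so that $x_k'-x_k=(P_k'-P_k)x_k$ has norm $\leqslant\theta_k\|x_k\|$ and $\|x_k'\|\leqslant\|x_k\|$, whence $\|\sum_k x_k'\|\geqslant(c-s)\bigl(\sum_k\|x_k\|^2\bigr)^{1/2}\geqslant(c-s)\bigl(\sum_k\|x_k'\|^2\bigr)^{1/2}$. Without the surjectivity statement $P_k'(H_k)=H_k'$ (which is where $\theta_k<1$, i.e.\ $\theta_k\leqslant s<c\leqslant1$, is used), the argument cannot be closed along your lines.
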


To prove Theorem~\ref{T:stability} we need the following lemma.

\begin{lemma}\label{L:gap<1}
Let $H$ be a Hilbert space and $M,N$ be two closed subspaces of $H$.
Denote by $P$ the orthogonal projection onto $M$.
If $\theta(M,N)<1$, then $P(N)=M$.
\end{lemma}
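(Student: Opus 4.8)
The plan is to show that $P$ restricted to $N$ is surjective onto $M$ by exploiting the fact that $\theta(M,N)=\|P-Q\|<1$, where $Q$ denotes the orthogonal projection onto $N$. First I would note the obvious inclusion $P(N)\subseteq M$, so the content is the reverse inclusion; since $M$ is closed it suffices to prove that $P(N)$ is dense in $M$ and closed in $M$, or more directly to produce, for each $x\in M$, an element $y\in N$ with $Py=x$.

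The key step is the estimate $\|Px-PQx\|\leqslant\|P-Q\|\,\|x\|=\theta(M,N)\|x\|$ for $x\in M$; since $Px=x$ for $x\in M$, this reads $\|x-PQx\|\leqslant\theta(M,N)\|x\|$ for all $x\in M$. In other words, the operator $B:=PQ|_M:M\to M$ satisfies $\|I_M-B\|\leqslant\theta(M,N)<1$, hence $B$ is invertible on $M$ by the Neumann series. Given $x\in M$, set $x':=B^{-1}x\in M$ and $y:=Qx'\in N$; then $Py=PQx'=Bx'=x$, so $x\in P(N)$. This gives $M\subseteq P(N)$, and combined with $P(N)\subseteq M$ we conclude $P(N)=M$.

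The main obstacle — really the only subtle point — is to see that $B=PQ|_M$ maps $M$ into $M$ and to justify applying the Neumann series: one must check that $\|I_M-B\|<1$ as an operator on $M$, which follows from restricting $P-Q$ and using $\|P-Q\|<1$ together with $Px=x$ on $M$. Everything else is a routine verification. A remark worth making in passing: the argument also shows $Q|_M$ is an isomorphic embedding of $M$ onto a subspace of $N$ (indeed $\|Qx\|\geqslant\|PQx\|=\|Bx\|\geqslant(1-\theta(M,N))\|x\|$ for $x\in M$), but for the statement of the lemma only the surjectivity $P(N)=M$ is needed.
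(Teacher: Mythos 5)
Your argument is correct and complete. Note that the paper does not actually prove this lemma: it simply cites \cite[Proof of Lemma 3.1]{GK69}, so you are supplying the argument the paper outsources. Your route is the standard one: from $Px=x$ on $M$ you get $\|x-PQx\|=\|P(P-Q)x\|\leqslant\theta(M,N)\|x\|$, so $B:=PQ|_M$ satisfies $\|I_M-B\|\leqslant\theta(M,N)<1$ and is invertible by the Neumann series (this is where you quietly use that $M$ is closed, hence a Banach space, and that $PQ$ maps $M$ into $M$ because $\mathrm{Ran}(P)=M$); then $x=P(Qx')$ with $x'=B^{-1}x$ exhibits every $x\in M$ as an element of $P(N)$, and $P(N)\subseteq M$ is trivial. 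This is essentially the Gohberg--Kre\u{\i}n argument, and your closing remark that $Q|_M$ is an isomorphic embedding with lower bound $1-\theta(M,N)$ is also correct, though not needed for the lemma.
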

\begin{proof}
See \cite[Proof of Lemma 3.1]{GK69}.
\end{proof}

\begin{proof}[Proof of Theorem~\ref{T:stability}]
Set $\theta_k:=\theta(H_k,H_k')$, $k=1,2,...,n$.
Define the operator $S:H_1\oplus...\oplus H_n\to H$ by
$$
S(x_1,...,x_n)^t:=x_1+...+x_n,\quad x_1\in H_1,...,x_n\in H_n.
$$
Then $\|Sv\|\geqslant c\|v\|$, $v\in H_1\oplus...\oplus H_n$.
Denote by $P_k$ the orthogonal projection onto $H_k$ and
by $P_k'$ the orthogonal projection onto $H_k'$, $k=1,2,...,n$.
Define the operator $T:H_1\oplus...\oplus H_n\to H$ by
$$
T(x_1,...,x_n)^t:=(P_1'-P_1)x_1+...+(P_n'-P_n)x_n, \quad x_1\in H_1,...,x_n\in H_n.
$$
Then for arbitrary element $v=(x_1,...,x_n)^t\in H_1\oplus...\oplus H_n$
we have
\begin{align*}
\|Tv\|&\leqslant\|P_1'-P_1\|\|x_1\|+...+\|P_n'-P_n\|\|x_n\|=
\theta_1\|x_1\|+...+\theta_n\|x_n\|\leqslant\\
&\leqslant\sqrt{\theta_1^2+...+\theta_n^2}\sqrt{\|x_1\|^2+...+\|x_n\|^2}=
\sqrt{\theta_1^2+...+\theta_n^2}\|v\|.
\end{align*}
It follows that
$$
\|(S+T)v\|=\|Sv+Tv\|\geqslant\|Sv\|-\|Tv\|\geqslant
(c-\sqrt{\theta_1^2+...+\theta_n^2})\|v\|.
$$
Therefore the operator $S+T$ is an isomorphic embedding.
For every $k=1,...,n$ define the subspace 
$\widetilde{H}_k\subset H_1\oplus...\oplus H_n$ by
$$
\widetilde{H}_k:=
\{0\}\oplus...\oplus\{0\}\oplus H_k\oplus\{0\}\oplus...\oplus\{0\}=
\{(0,...,0,x_k,0,...,0)|x_k\in H_k\}.
$$
If $v=(0,...,0,x_k,0,...,0)\in\widetilde{H}_k$, then
$(S+T)v=x_k+(P_k'-P_k)x_k=P_k'(x_k)$.
Thus $(S+T)(\widetilde{H}_k)=P_k'(H_k)$.
Since $\theta(H_k',H_k)=\theta_k<c\leqslant 1$,
by Lemma~\ref{L:gap<1} we conclude that $P_k'(H_k)=H_k'$.
Therefore $(S+T)(\widetilde{H}_k)=H_k'$, $k=1,2,...,n$.
Since the subspaces $\widetilde{H}_1,...,\widetilde{H}_n$
are linearly independent and $S+T$ is an isomorphic embedding,
we see that the subspaces $H_1',...,H_n'$ are linearly independent.
Moreover, their sum
$H_1'+...+H_n'=(S+T)(\widetilde{H}_1)+...+(S+T)(\widetilde{H}_n)=
(S+T)(\widetilde{H}_1+...+\widetilde{H}_n)=(S+T)(H_1\oplus...\oplus H_n)$
is closed in $H$.
By Theorem~\ref{T:main}, $(1)\Leftrightarrow(2)$,
the system of subspaces $H_1',...,H_n'$ possesses the IBAP.
\end{proof}

\section{Inverse marginal property}\label{S:IMP}

\subsection{Definitions}
Let $(\Omega,\mathcal{F},\mu)$ be a probability space.
Denote by $\mathbb{K}$ a base field of scalars, i.e., $\mathbb{R}$ or $\mathbb{C}$.
For an $\mathcal{F}$-measurable function (random variable) 
$\xi:\Omega\to\mathbb{K}$ denote by $E\xi$ the expectation of $\xi$ (if it exists).
Two random variables $\xi$ and $\eta$ are said to be equivalent 
if $\xi(\omega)=\eta(\omega)$ for $\mu$-almost all $\omega$.
Denote by $L^2(\mathcal{F})=L^2(\Omega,\mathcal{F},\mu)$ the set of equivalence classes of
random variables $\xi:\Omega\to\mathbb{K}$ such that $E|\xi|^2<\infty$. 
For $\xi,\eta\in L^2(\mathcal{F})$ set
$\langle\xi,\eta\rangle:=E(\xi\overline{\eta})$.
Then $L^2(\mathcal{F})$ is a Hilbert space.
For every sub-$\sigma$-algebra $\mathcal{A}$ of $\mathcal{F}$ we define the marginal subspace corresponding to $\mathcal{A}$, $L^2(\mathcal{A})$, as follows.
$L^2(\mathcal{A})$ consists of elements (equivalence classes) of 
$L^2(\mathcal{F})$ which contain at least one $\mathcal{A}$-measurable random variable.
The subspace $L^2(\mathcal{A})$ is closed in $L^2(\mathcal{F})$.
Indeed, it is easily seen that $L^2(\mathcal{A})$
is canonically isometrically isomorphic to 
$L^2(\Omega,\mathcal{A},\mu|_{\mathcal{A}})$.
Since the latter space is Hilbert, we conclude that $L^2(\mathcal{A})$
is complete.
It follows that $L^2(\mathcal{A})$ is closed in $L^2(\mathcal{F})$.
Moreover, it is well-known that
the conditional expectation operator $\xi\mapsto E(\xi|\mathcal{A})$ 
is the orthogonal projection onto $L^2(\mathcal{A})$ in $L^2(\mathcal{F})$.
Denote by $L^2_0(\mathcal{A})$ the subspace of all 
$\xi\in L^2(\mathcal{A})$ with $E\xi=0$.
This subspace is also closed in $L^2(\mathcal{F})$.
This follows from the equality 
$L^2_0(\mathcal{A})=L^2(\mathcal{A})\cap\{1\}^\bot$. 

\subsection{The inverse marginal property}

Let $(\Omega,\mathcal{F},\mu)$ be a probability space and 
$\mathcal{F}_1,...,\mathcal{F}_n$ be sub-$\sigma$-algebras of $\mathcal{F}$.
We will say that the collection $\mathcal{F}_1,...,\mathcal{F}_n$ possesses
the inverse marginal property (IMP) if for arbitrary random variables
$\xi_1,...,\xi_n$ such that 

(1) $\xi_k$ is $\mathcal{F}_k$-measurable, $k=1,2,...,n$;

(2) $E|\xi_k|^2<\infty$, $k=1,2,...,n$;

(3) $E\xi_1=E\xi_2=...=E\xi_n$,

there exists a random variable $\xi$ such that $E|\xi|^2<\infty$ and 
$E(\xi|\mathcal{F}_k)=\xi_k$ for all $k=1,2,...,n$.

The simplest example of a collection of sub-$\sigma$-algebras which possesses the IMP 
is a system of pairwise independent sub-$\sigma$-algebras.
In this case a needed random variable $\xi$ 
can be defined by $\xi:=\xi_1+...+\xi_n-(n-1)a$, where
$a:=E\xi_1=E\xi_2=...=E\xi_n$.

\begin{theorem}\label{T:IMP}
The collection of sub-$\sigma$-algebras 
$\mathcal{F}_1,...,\mathcal{F}_n$ possesses the IMP
if and only if
the system of subspaces $L^2_0(\mathcal{F}_1),...,L^2_0(\mathcal{F}_n)$ 
possesses the IBAP (in $L^2(\mathcal{F})$).
\end{theorem}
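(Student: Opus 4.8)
The plan is to work entirely inside the Hilbert space $H:=L^2(\mathcal{F})$ and to relate the two properties through the conditional expectation operators. Write $P_k:=E(\cdot\,|\mathcal{F}_k)$ for the orthogonal projection of $H$ onto $L^2(\mathcal{F}_k)$, and let $P_k^0$ denote the orthogonal projection of $H$ onto $L^2_0(\mathcal{F}_k)$. The first (and only slightly computational) step is to identify $P_k^0$ explicitly. Using the orthogonal decomposition $L^2(\mathcal{F}_k)=L^2_0(\mathcal{F}_k)\oplus\mathbb{K}\cdot 1$ together with the equality $L^2_0(\mathcal{F}_k)=L^2(\mathcal{F}_k)\cap\{1\}^\bot$ recorded above, I would show that
$$
P_k^0\xi=E(\xi\,|\mathcal{F}_k)-E\xi,\qquad \xi\in H.
$$
Indeed, the right-hand side is $\mathcal{F}_k$-measurable, square-integrable and of zero expectation, hence lies in $L^2_0(\mathcal{F}_k)$; and the residual $\xi-E(\xi\,|\mathcal{F}_k)+E\xi$ is orthogonal to $L^2_0(\mathcal{F}_k)$, because $\xi-E(\xi\,|\mathcal{F}_k)\perp L^2(\mathcal{F}_k)\supseteq L^2_0(\mathcal{F}_k)$ while the constant $(E\xi)\cdot 1$ is orthogonal to every mean-zero random variable.

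With this formula at hand, the equivalence reduces to a shift by constants. For IBAP $\Rightarrow$ IMP, take $\xi_1,...,\xi_n$ as in the definition of the IMP and put $a:=E\xi_1=...=E\xi_n$. Then $\eta_k:=\xi_k-a\in L^2_0(\mathcal{F}_k)$, so the IBAP yields $\eta\in H$ with $P_k^0\eta=\eta_k$, that is, $E(\eta\,|\mathcal{F}_k)-E\eta=\xi_k-a$ for all $k$. The random variable $\xi:=\eta-E\eta+a$ then satisfies $\xi\in L^2(\mathcal{F})$ and $E(\xi\,|\mathcal{F}_k)=E(\eta\,|\mathcal{F}_k)-E\eta+a=\xi_k$ for every $k$, which is the IMP. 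Conversely, for IMP $\Rightarrow$ IBAP, take arbitrary $\eta_k\in L^2_0(\mathcal{F}_k)$; since $E\eta_1=...=E\eta_n=0$, the IMP produces $\xi\in H$ with $E(\xi\,|\mathcal{F}_k)=\eta_k$ for all $k$. Taking expectations gives $E\xi=E\,E(\xi\,|\mathcal{F}_k)=E\eta_k=0$, whence $P_k^0\xi=E(\xi\,|\mathcal{F}_k)-E\xi=\eta_k$ for every $k$; this is precisely the IBAP for $L^2_0(\mathcal{F}_1),...,L^2_0(\mathcal{F}_n)$.

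I do not expect a genuine obstacle in this argument: the whole content is the identification of the orthogonal projection onto $L^2_0(\mathcal{F}_k)$ and the careful bookkeeping of the additive constant, so that the shifted random variable remains square-integrable and has the prescribed conditional expectations. Everything else follows directly from the definitions of the IMP and of the IBAP.
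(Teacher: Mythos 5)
Your proposal is correct and follows essentially the same route as the paper: identify the orthogonal projection onto $L^2_0(\mathcal{F}_k)$ as $\xi\mapsto E(\xi\,|\mathcal{F}_k)-E\xi$ and then pass between the two properties by shifting with the common expectation $a$ (resp.\ by $-E\eta+a$ in the reverse direction). Your explicit verification of the projection formula is a small addition the paper leaves implicit, but the argument is otherwise the same.
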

\begin{proof}
Denote by $P_k$ the orthogonal projection onto $L^2_0(\mathcal{F}_k)$, $k=1,...,n$.
Since $L^2_0(\mathcal{F}_k)=L^2(\mathcal{F}_k)\ominus span(1)$
(here $span(1)$ is the subspace spanned by $1$, i.e., the subspace
of constant random variables), we conclude that
$P_k\xi=E(\xi|\mathcal{F}_k)-E\xi$, $\xi\in L^2(\mathcal{F})$.

First assume that the collection $\mathcal{F}_1,...,\mathcal{F}_n$ possesses the IMP.
We have to show that the system of subspaces
$L^2_0(\mathcal{F}_1),...,L^2_0(\mathcal{F}_n)$ possesses the IBAP.
Consider arbitrary elements $\xi_k\in L^2_0(\mathcal{F}_k)$,
$k=1,2,...,n$.
Since the collection $\mathcal{F}_1,...,\mathcal{F}_n$
possesses the IMP, we conclude that there exists an element
$\xi\in L^2(\mathcal{F})$ such that $E(\xi|\mathcal{F}_k)=\xi_k$
for all $k=1,2,...,n$.
Clearly, $E\xi=0$.
Thus $P_k\xi=E(\xi|\mathcal{F}_k)-E\xi=\xi_k$, $k=1,2,...,n$.
Therefore the system of subspaces
$L^2_0(\mathcal{F}_1),...,L^2_0(\mathcal{F}_n)$ possesses the IBAP.

Now assume that the system of subspaces
$L^2_0(\mathcal{F}_1),...,L^2_0(\mathcal{F}_n)$ possesses the IBAP.
We have to show that the collection of sub-$\sigma$-algebras
$\mathcal{F}_1,...,\mathcal{F}_n$ possesses the IMP.
Consider arbitrary random variables $\xi_1,...,\xi_n$ such that 

(1) $\xi_k$ is $\mathcal{F}_k$-measurable, $k=1,2,...,n$;

(2) $E|\xi_k|^2<\infty$, $k=1,2,...,n$;

(3) $E\xi_1=E\xi_2=...=E\xi_n$. 

Set $a:=E\xi_1=E\xi_2=...=E\xi_n$.
Then $\xi_k-a\in L^2_0(\mathcal{F}_k)$, $k=1,2,...,n$.
Since the system of subspaces
$L^2_0(\mathcal{F}_1),...,L^2_0(\mathcal{F}_n)$ possesses the IBAP,
we conclude that there exists an element $\xi\in L^2(\mathcal{F})$
such that $P_k\xi=\xi_k-a$ for all $k=1,2,...,n$.
This means that $E(\xi|\mathcal{F}_k)-E\xi=\xi_k-a$, $k=1,2,...,n$.
It follows that $E(\xi-E\xi+a|\mathcal{F}_k)=\xi_k$, $k=1,2,...,n$.
Therefore the collection of sub-$\sigma$-algebras
$\mathcal{F}_1,...,\mathcal{F}_n$ possesses the IMP.
\end{proof}

\subsection{On the closedness of the sum of marginal subspaces}

Theorem~\ref{T:IMP} and Theorem~\ref{T:main}, $(1)\Leftrightarrow (2)$,
show that a collection of sub-$\sigma$-algebras $\mathcal{F}_1,...,\mathcal{F}_n$
possesses the IMP if and only if the subspaces
$L^2_0(\mathcal{F}_1),...,L^2_0(\mathcal{F}_n)$ are linearly independent
and their sum is closed in $L^2(\mathcal{F})$.
One can easily check that the subspace 
$L^2_0(\mathcal{F}_1)+...+L^2_0(\mathcal{F}_n)$ is closed in $L^2(\mathcal{F})$
if and only if the subspace
$L^2(\mathcal{F}_1)+...+L^2(\mathcal{F}_n)$ is closed in $L^2(\mathcal{F})$.
It is worth mentioning that the problem on the closedness of the sum
of marginal subspaces is important and arises, for example, in

(1) statistics (see, e.g., \cite{Bickel91});

(2) additive modeling (see, e.g., \cite{Buja96}).
Here each sub-$\sigma$-algebra $\mathcal{F}_i=\sigma a(\xi_i)$, 
the $\sigma$-algebra generated by a random variable $\xi_i$.
Consequently, each marginal subspace $L^2(\mathcal{F}_i)$
consists of (equivalence classes of) Borel measurable transformations of $\xi_i$, 
$f(\xi_i)$, which belong to $L^2(\mathcal{F})$.
As Andreas Buja writes in \cite[Subsection 8.1]{Buja96},
the question on closedness of $L^2_0(\mathcal{F}_1)+...+L^2_0(\mathcal{F}_n)$ is a technicality
that is at the heart of all additive modeling, 
including ACE (alternating conditional expectations method),
GAMs (generalized additive models) and PPR (projection pursuit regression).

(3) theoretical tomography and theory of ridge functions
(see, e.g., \cite[Introduction and Chapter 7]{Pinkus15}).
Note that every subspace of ridge functions $L^2(a;K)$ can be considered as marginal. 

\subsection{Example 1 ($\Omega=\mathbb{N}$)}\label{SS:Omega=N}

Consider the space $\Omega:=\mathbb{N}=\{1,2,...\}$
with the $\sigma$-algebra $\mathcal{F}:=2^{\mathbb{N}}$.
Then a probability measure $\mu$ is defined by a set of numbers
$\mu(\{k\})=p_k\geqslant 0$, $k=1,2,...$ with $\sum_{k=1}^\infty p_k=1$.
In what follows we assume that $p_k>0$, $k=1,2,...$.
Consider the Hilbert space $H:=L^2(\mathbb{N},2^{\mathbb{N}},\mu)$.
The space consists of all functions (sequences) $f:\mathbb{N}\to\mathbb{K}$
such that $\int_{\mathbb{N}}|f(k)|^2 d\mu(k)=\sum_{k=1}^\infty p_k|f(k)|^2<\infty$.

Let $A$ be an infinite subset of $\mathbb{N}\setminus\{1\}$.
Define the partition of $\mathbb{N}$, $part(A)$, as follows.
Let $A=\{k_1,k_2,k_3,...\}$, where $2\leqslant k_1<k_2<k_3<...$.
Set
$$
part(A)=\{\{1,...,k_1-1\},\{k_1,...,k_2-1\},\{k_2,...,k_3-1\},...\}.
$$
The numbers $1,k_1,k_2,...$ are ``starting points'' for the sets of $part(A)$.
Let $\mathcal{A}=\sigma a(A)$ be the $\sigma$-algebra generated by the
partition $part(A)$.
Then the marginal subspace $L^2(\mathcal{A})$ consists of all functions $f\in H$
which are constant on each of the elements of $part(A)$, i.e., such that
$$
f(1)=...=f(k_1-1), f(k_1)=...=f(k_2-1), f(k_2)=...=f(k_3-1),....
$$

We will study the following problem.
Let $A_1,...,A_n$ be infinite subsets of $\mathbb{N}\setminus\{1\}$.
Denote by $\mathcal{A}_1,...,\mathcal{A}_n$ the corresponding $\sigma$-algebras.
Question: when the collection of $\sigma$-algebras $\mathcal{A}_1,...,\mathcal{A}_n$
possesses the IMP?
Equivalently, when the system of subspaces
$L^2_0(\mathcal{A}_1),...,L^2_0(\mathcal{A}_n)$ possesses the IBAP?
In the following theorem we establish relations between 
the IMP, the IBAP, closedness of the sum of marginal subspaces and
``fast decreasing'' of tails of the series $\sum_{k=1}^\infty p_k$.

\begin{theorem}\label{T:N}
Suppose that $\mathbb{N}\setminus\{1\}$ is partitioned into
$n$ infinite subsets $A_1,...,A_n$.
Let $\mathcal{A}_1,...,\mathcal{A}_n$ be the corresponding $\sigma$-algebras.
Set $r_k:=\sum_{j=k}^\infty p_j$, $k\geqslant 1$.
Consider the following statements:

(1) the collection $\mathcal{A}_1,...,\mathcal{A}_n$ possesses the IMP;

(2) the system of subspaces $L^2_0(\mathcal{A}_1),...,L^2_0(\mathcal{A}_n)$
possesses the IBAP;

(3) the subspace $L^2_0(\mathcal{A}_1)+...+L^2_0(\mathcal{A}_n)$ 
is closed in $L^2(2^{\mathbb{N}})$;

(4) $L^2_0(\mathcal{A}_1)+...+L^2_0(\mathcal{A}_n)=L^2_0(2^{\mathbb{N}})$;

(5) $\sup\{r_{k+1}/r_k\,|\,k\geqslant 1\}<1$.

Then statements (1), (2), (3), (4) are pairwise equivalent and $(5)\Rightarrow (1)$.
If there exists a number $k_0\geqslant 2$ such that for every $k\geqslant k_0$
the numbers $k$ and $k+1$ belong to distinct sets $A_i$, then
$(1)\Rightarrow(5)$.
\end{theorem}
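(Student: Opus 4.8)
The plan is to split the theorem into three blocks: the ring of equivalences $(1)\Leftrightarrow(2)\Leftrightarrow(3)\Leftrightarrow(4)$, which I expect to hold for \emph{every} partition; the implication $(5)\Rightarrow(1)$; and the conditional implication $(1)\Rightarrow(5)$. The equivalence $(1)\Leftrightarrow(2)$ is exactly Theorem~\ref{T:IMP}. To handle the rest I would first fix notation: write $d_k(f):=f(k)-f(k-1)$ for $k\ge 2$ and let $\mathbf{1}_{[k,\infty)}$ denote the indicator of $\{k,k+1,\dots\}$, so that $E\mathbf{1}_{[k,\infty)}=r_k$ and $\|\mathbf{1}_{[k,\infty)}-r_k\|^2=r_k(1-r_k)$ (a scalar $c$ being identified with the constant function $c$, as in the paper). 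The definition of $part(A_i)$ shows that $f\in L^2(\mathcal{A}_i)$ precisely when $d_k(f)=0$ for every $k\ge 2$ with $k\notin A_i$, and one checks that $\{\mathbf{1}_{[k,\infty)}-r_k\mid k\in A_i\}$ spans a dense subspace of $L^2_0(\mathcal{A}_i)$.

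Next I would record two facts valid for an arbitrary partition. \emph{Linear independence} of $L^2_0(\mathcal{A}_1),\dots,L^2_0(\mathcal{A}_n)$: if $f_1+\dots+f_n=0$, then for each $k\ge 2$, writing $k\in A_{i_0}$, we have $d_k(f_i)=0$ for $i\ne i_0$ and $\sum_i d_k(f_i)=0$, hence $d_k(f_{i_0})=0$ as well; so each $f_{i_0}$ is constant, and being mean zero it vanishes. \emph{Density of the sum}: for $m\ge 2$ the mean-zero function $\mathbf{1}_{\{m\}}-p_m$ equals $(\mathbf{1}_{[m,\infty)}-r_m)-(\mathbf{1}_{[m+1,\infty)}-r_{m+1})\in L^2_0(\mathcal{A}_i)+L^2_0(\mathcal{A}_j)$, where $m\in A_i$ and $m+1\in A_j$ (and $\mathbf{1}_{\{1\}}-p_1=-(\mathbf{1}_{[2,\infty)}-r_2)$), so $L^2_0(\mathcal{A}_1)+\dots+L^2_0(\mathcal{A}_n)$ contains all mean-zero finitely supported functions and is therefore dense in $L^2_0(2^{\mathbb{N}})$. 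Combining linear independence with Theorem~\ref{T:main}, $(1)\Leftrightarrow(2)$, gives $(2)\Leftrightarrow(3)$; combining density with the closedness of $L^2_0(2^{\mathbb{N}})$ gives $(3)\Leftrightarrow(4)$, where $(4)\Rightarrow(3)$ is trivial and $(3)\Rightarrow(4)$ is ``closed $+$ dense $=$ everything''.

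For $(5)\Rightarrow(1)$ I would introduce the jump-reconstruction operators $R_i g:=\sum_{k\in A_i}d_k(g)(\mathbf{1}_{[k,\infty)}-r_k)$ for $g\in H$. Using $d_k(\mathbf{1}_{[l,\infty)}-r_l)=\delta_{kl}$ together with the dense spanning families above, each $R_i$ --- \emph{once it is known to be bounded} --- is a continuous projection with $Ran(R_i)=L^2_0(\mathcal{A}_i)$ and $R_iR_j=0$ for $i\ne j$, so Theorem~\ref{T:main}, $(7)\Rightarrow(1)$, together with Theorem~\ref{T:IMP}, would finish this case. Boundedness is the crux, and it is where $(5)$ enters. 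Put $q:=\sup_k r_{k+1}/r_k<1$; then $p_k\ge(1-q)r_k$ and $r_k\ge q^{k-m}r_m$ for $k\le m$, so $\sum_{k=1}^m 1/p_k=O(1/r_m)$, which is the Muckenhoupt-type condition making the weighted discrete Hardy inequality $\sum_j p_j\big(\sum_{k\le j}|a_k|\big)^2\le B\sum_k V_k|a_k|^2$ valid with $V_k:=\min(p_k,p_{k-1})$. This weight is chosen to meet \emph{both} constraints at once: $V_k\le p_k$ and $V_k\le p_{k-1}$ give $\sum_{k\ge2}V_k|d_k(g)|^2\le4\|g\|^2$, while the Muckenhoupt condition still holds. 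Applying Hardy to the sequence $(d_k(g))_{k\in A_i}$ then yields $\big\|\sum_{k\in A_i}d_k(g)\mathbf{1}_{[k,\infty)}\big\|^2\le4B\|g\|^2$, and since $R_i g$ is the mean-zero part of that element, $\|R_i g\|\le2\sqrt{B}\,\|g\|$, which also justifies convergence of the defining series. I would additionally remark that $(5)\Rightarrow(1)$ can alternatively be deduced from the general sufficient condition, Theorem~\ref{T:sufficient for IMP}.

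For $(1)\Rightarrow(5)$, under the hypothesis that $k$ and $k+1$ lie in distinct parts for all large $k$, I would argue by contraposition. Since $r_{k+1}/r_k<1$ always, failure of $(5)$ means $\sup_k r_{k+1}/r_k=1$, i.e.\ $\inf_k p_k/r_k=0$; choose $k\to\infty$ with $p_k/r_k\to0$ and, by the hypothesis, with $k$ and $k+1$ in distinct parts $A_i$, $A_j$, and set $f_i:=\mathbf{1}_{[k,\infty)}-r_k$, $f_j:=-(\mathbf{1}_{[k+1,\infty)}-r_{k+1})$, all other coordinates $0$. Then $f_i+f_j=\mathbf{1}_{\{k\}}-p_k$, so
$$
\frac{\|f_i+f_j\|^2}{\|f_i\|^2+\|f_j\|^2}=\frac{p_k(1-p_k)}{r_k(1-r_k)+r_{k+1}(1-r_{k+1})}\le\frac{p_k}{r_k(1-r_k)}\longrightarrow0 ,
$$
so the infimum in Theorem~\ref{T:main}(3) equals $0$; hence $L^2_0(\mathcal{A}_1),\dots,L^2_0(\mathcal{A}_n)$ fails the IBAP and, by Theorem~\ref{T:IMP}, the collection fails the IMP. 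The step I anticipate as the main obstacle is exactly the boundedness of $R_i$ above: one must thread the weight $V_k$ between being small enough that $g\mapsto(d_k(g))$ maps $H$ boundedly into $\ell^2(V)$ and large enough for the discrete Hardy inequality to apply, and checking that $V_k=\min(p_k,p_{k-1})$ does both is precisely where the geometric tail decay $(5)$ is invoked; everything else is bookkeeping with tail indicators plus two applications each of Theorems~\ref{T:main} and~\ref{T:IMP}.
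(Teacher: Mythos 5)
Your proposal is correct in substance, but it takes a genuinely different route from the paper in the two directions involving condition (5). For the ring $(1)\Leftrightarrow(2)\Leftrightarrow(3)\Leftrightarrow(4)$ you do essentially what the paper does (Theorem~\ref{T:IMP}, linear independence, density of the sum, Theorem~\ref{T:main} $(1)\Leftrightarrow(2)$), except that you work directly with the centered tail indicators $\mathbf{1}_{[k,\infty)}-r_k$ in $L^2_0(\mathcal{A}_i)$, whereas the paper passes to the auxiliary subspaces $H_{A_i}$ and an extended statement (Theorem~\ref{T:extended N}, Lemmas~\ref{L:sum L2}--\ref{L:dense}); this is only a bookkeeping difference. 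For $(5)\Rightarrow(1)$ the paper shows that $\{p_k^{-1/2}f_k\}$ is a Riesz basis by writing it as $(I-S)^{-1}$ applied to an orthonormal basis, where $S$ is a weighted shift with $\|S^N\|<1$ (via Lemma~\ref{L:tails}), and then invokes Theorem~\ref{T:IBAP and Riesz}(2); you instead build explicit bounded projections $R_i$ with $R_iR_j=0$ and $Ran(R_i)=L^2_0(\mathcal{A}_i)$ and invoke Theorem~\ref{T:main}, $(7)\Rightarrow(1)$, with boundedness obtained from the two-weight discrete Hardy inequality under a Muckenhoupt-type condition. Your verification of that condition from $q=\sup_k r_{k+1}/r_k<1$ and the transfer $\sum_k V_k|d_k(g)|^2\le 4\|g\|^2$ with $V_k=\min(p_k,p_{k-1})$ are correct, and the same estimate applied to tails gives convergence of the series defining $R_i$; the price is that you import the Hardy--Muckenhoupt characterization as an external result, whereas the paper's shift argument is self-contained. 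For $(1)\Rightarrow(5)$ your contrapositive is cleaner than the paper's: you test Theorem~\ref{T:main}(3) directly on the pair $\mathbf{1}_{[k,\infty)}-r_k$, $-(\mathbf{1}_{[k+1,\infty)}-r_{k+1})$ along a subsequence with $p_k/r_k\to 0$, avoiding both the inclination argument and Lemma~\ref{L:tails}.

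One aside in your write-up is wrong: $(5)\Rightarrow(1)$ cannot be deduced from Theorem~\ref{T:sufficient for IMP} in this setting. That theorem requires $\mu(B_1\cap\dots\cap B_n)\geqslant\alpha\,\mu(B_1)\cdots\mu(B_n)$ for all $B_i\in\mathcal{A}_i$, and here (for $n\geqslant 2$) one can always pick an atom of $\mathcal{A}_1$ and a disjoint atom of $\mathcal{A}_2$ lying further to the right, both of positive measure since all $p_k>0$; then the left-hand side is $0$ while the right-hand side is positive, so the hypothesis fails no matter how fast the tails decay. This does not affect your main argument, which stands on the Hardy-inequality construction.
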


\subsection{Auxiliary definitions}

To prove Theorem~\ref{T:N} we need a few auxiliary definitions:

1. Let $A$ be an infinite subset of $\mathbb{N}$.
We have already defined $part(A)$ and $\sigma a(A)$ if $1\notin A$.
Now let $1\in A$.
Let $A=\{k_1,k_2,k_3,...\}$, where $1=k_1<k_2<k_3<...$.
Define
$$
part(A)=\{\{k_1,...,k_2-1\},\{k_2,...,k_3-1\},\{k_3,...,k_4-1\},...\}
$$
and let $\mathcal{A}=\sigma a(A)$ be the $\sigma$-algebra generated by the
partition $part(A)$. 
It is clear that $part(A)=part(A\setminus\{1\})$
and $\sigma a(A)=\sigma a(A\setminus\{1\})$.

2. Let $A=\{k_1,k_2,k_3,...\}$ be an infinite subset of $\mathbb{N}$
(we assume that $k_1<k_2<k_3<...$)
and $\mathcal{A}=\sigma a(A)$ be the corresponding $\sigma$-algebra.
Define the closed subspace $H_A$ of $H$ as follows.
If $k_1=1$, then $H_A:=L^2(\mathcal{A})$;
if $k_1>1$, then $H_A:=\{f\in L^2(\mathcal{A})\,|\,f(1)=0\}$.
Note that in the latter case the subspace $H_A$ consists of all
functions $f\in H$ for which
$$
f(1)=...=f(k_1-1)=0, f(k_1)=...=f(k_2-1), f(k_2)=...=f(k_3-1),....
$$

\subsection{Extended version of Theorem~\ref{T:N}}

We will prove the following extended version of Theorem~\ref{T:N}.

\begin{theorem}\label{T:extended N}
Suppose that $\mathbb{N}$ is partitioned into $n$ infinite subsets $A_1,...,A_n$.
Let $\mathcal{A}_1,...,\mathcal{A}_n$ be the corresponding $\sigma$-algebras.
Set $r_k:=\sum_{j=k}^\infty p_j$, $k\geqslant 1$.
Consider the following statements:

(1) the collection $\mathcal{A}_1,...,\mathcal{A}_n$ possesses the IMP;

(2) the system of subspaces $L^2_0(\mathcal{A}_1),...,L^2_0(\mathcal{A}_n)$
possesses the IBAP;

(3) the system of subspaces $H_{A_1},...,H_{A_n}$ possesses the IBAP;

(4a) the subspace $L^2_0(\mathcal{A}_1)+...+L^2_0(\mathcal{A}_n)$ 
is closed in $L^2(2^{\mathbb{N}})$;

(4b) the subspace $L^2(\mathcal{A}_1)+...+L^2(\mathcal{A}_n)$ 
is closed in $L^2(2^{\mathbb{N}})$;

(5a) $L^2_0(\mathcal{A}_1)+...+L^2_0(\mathcal{A}_n)=L^2_0(2^{\mathbb{N}})$;

(5b) $L^2(\mathcal{A}_1)+...+L^2(\mathcal{A}_n)=L^2(2^{\mathbb{N}})$;

(6) $\sup\{r_{k+1}/r_k\,|\,k\geqslant 1\}<1$.

Then statements (1), (2), (3), (4a), (4b), (5a), (5b) are pairwise equivalent
and $(6)\Rightarrow (1)$.
If there exists a number $k_0\geqslant 1$ such that for every $k\geqslant k_0$
the numbers $k$ and $k+1$ belong to distinct sets $A_i$, then
$(1)\Rightarrow(6)$.
\end{theorem}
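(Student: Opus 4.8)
The plan is to derive all the stated equivalences from $(1)\Leftrightarrow(2)$ of Theorem~\ref{T:main} and from Theorem~\ref{T:IMP}, once two structural facts about the interval partitions $part(A_j)$ are in hand. Write $B^j_1,B^j_2,\dots$ for the blocks of $part(A_j)$, with $1\in B^j_1$; their left endpoints are exactly the points of $\{1\}\cup A_j$, and $\mathbf 1_B$ denotes the indicator of a set $B\subseteq\mathbb N$. \emph{Density.} For $g\in L^2(2^{\mathbb N})$ the partial sums $S(m):=\sum_{l\leqslant m}p_l g(l)$ converge absolutely to $Eg$ (by Cauchy--Schwarz, since $\sum_l p_l<\infty$), and $g\perp L^2(\mathcal A_j)$ precisely when $S$ vanishes at every point $t-1$ with $t\in\{1\}\cup A_j$; since $\bigcup_j A_j=\mathbb N$ this forces $S\equiv 0$, hence $g=0$, so $\sum_j L^2(\mathcal A_j)$ is dense in $L^2(2^{\mathbb N})$ and, taking expectations into account, $\sum_j L^2_0(\mathcal A_j)$ is dense in $L^2_0(2^{\mathbb N})$. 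For the subspaces $H_{A_j}$ the same computation yields only $S(a-1)=\kappa_j$ for all $a\in A_j$, with constants $\kappa_j$; but infiniteness of each $A_j$ together with $S(m)\to Eg$ forces $\kappa_j=Eg=0$, so $\sum_j H_{A_j}$ is dense in $L^2(2^{\mathbb N})$ as well. \emph{Linear independence.} If $\sum_j f_j=0$ with $f_j\in H_{A_j}$, strong induction on $k$ shows $f_j(k)=0$ for all $j$ and $k$: for the unique $j_0$ with $k\in A_{j_0}$, each $f_j$ with $j\neq j_0$ is constant on the block of $part(A_j)$ containing $k$, whose left endpoint is smaller than $k$ (or $f_j\equiv 0$ on that block), so $f_j(k)=0$ by the inductive hypothesis and then $f_{j_0}(k)=0$ from the relation. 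Thus $H_{A_1},\dots,H_{A_n}$ are linearly independent; combining this with the identity $L^2(\mathcal A_j)=H_{A_j}+\mathbb K 1$ (valid for every $j$, since $1=\sum_i\mathbf 1_{B^j_i}$ and $\sum_{i\geqslant2}\mathbf 1_{B^j_i}\in H_{A_j}$), the orthogonal splitting $L^2(\mathcal A_j)=L^2_0(\mathcal A_j)\oplus\mathbb K 1$, and the vanishing of the mean on $L^2_0(\mathcal A_j)$, one also gets that $L^2_0(\mathcal A_1),\dots,L^2_0(\mathcal A_n)$ are linearly independent.

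With these facts the web of equivalences among $(1)$--$(5\mathrm b)$ is immediate. By Theorem~\ref{T:IMP}, $(1)\Leftrightarrow(2)$. By Theorem~\ref{T:main}, $(1)\Leftrightarrow(2)$, together with linear independence of the $L^2_0(\mathcal A_j)$, statement $(2)$ is equivalent to closedness of $\sum_j L^2_0(\mathcal A_j)$, which by the density statement is equivalent to $\sum_j L^2_0(\mathcal A_j)=L^2_0(2^{\mathbb N})$; thus $(2)\Leftrightarrow(4\mathrm a)\Leftrightarrow(5\mathrm a)$. The orthogonal splittings $L^2(2^{\mathbb N})=L^2_0(2^{\mathbb N})\oplus\mathbb K 1$ and $L^2(\mathcal A_j)=L^2_0(\mathcal A_j)\oplus\mathbb K 1$ give $(5\mathrm a)\Leftrightarrow(5\mathrm b)$, and density again gives $(4\mathrm b)\Leftrightarrow(5\mathrm b)$. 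Finally $\sum_j L^2(\mathcal A_j)=\sum_j H_{A_j}+\mathbb K 1=\sum_j H_{A_j}$, because $1\in L^2(\mathcal A_{j_1})=H_{A_{j_1}}$, where $j_1$ is the index with $1\in A_{j_1}$; hence $(5\mathrm b)$ is equivalent to $\sum_j H_{A_j}=L^2(2^{\mathbb N})$, which, by Theorem~\ref{T:main}, the linear independence of the $H_{A_j}$, and density, is equivalent to $(3)$. This closes the chain $(1)\Leftrightarrow(2)\Leftrightarrow(3)\Leftrightarrow(4\mathrm a)\Leftrightarrow(4\mathrm b)\Leftrightarrow(5\mathrm a)\Leftrightarrow(5\mathrm b)$.

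To prove $(6)\Rightarrow(1)$, set $q:=\sup_k r_{k+1}/r_k<1$; then for every block $\mu(B^j_i)=r_{\min B^j_i}-r_{\max B^j_i+1}\geqslant(1-q)\,r_{\min B^j_i}$, so each block carries mass comparable to the tail at its left endpoint. I would then show that the normalized indicators $e_B:=\mathbf 1_B/\sqrt{\mu(B)}$ of the blocks spanning the $H_{A_j}$ (all $B^j_i$ except the first blocks $B^j_1$ with $1\notin A_j$) form a Riesz family. The upper bound $\|\sum_B a_B e_B\|^2\leqslant n\sum_B|a_B|^2$ is automatic, since every point of $\mathbb N$ lies in at most $n$ of these blocks. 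For the lower bound one evaluates $\sum_B a_B e_B$ at the left endpoint $k$ of a block $B$: among the spanning blocks, $B$ is the only one with left endpoint $k$, and all others meeting $\{k\}$ start strictly before $k$, so $(a_B)$ solves a lower-triangular linear system whose off-diagonal weights decay geometrically (by $\mu(B)\geqslant(1-q)r_{\min B}$), and inverting it gives the lower Riesz bound. Then Theorem~\ref{T:IBAP and Riesz}(2) yields the IBAP of $H_{A_1},\dots,H_{A_n}$, i.e.\ $(3)$, hence $(1)$. I expect this quantitative Riesz estimate to be the main obstacle of the whole argument; it is also a particular case of the general sufficient condition for the IMP.

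For $(1)\Rightarrow(6)$ under the stated hypothesis, I argue by contraposition. If $\sup_k r_{k+1}/r_k=1$, choose $k_m\to\infty$ with $p_{k_m}/r_{k_m}=1-r_{k_m+1}/r_{k_m}\to 0$ and $k_m\geqslant k_0$, and let $j(k)$ be the index with $k\in A_{j(k)}$, so that $j(k_m)\neq j(k_m+1)$ by hypothesis. Put $\tau_m:=\mathbf 1_{\{k_m,k_m+1,\dots\}}\in L^2(\mathcal A_{j(k_m)})$, $\tau_m':=\mathbf 1_{\{k_m+1,k_m+2,\dots\}}\in L^2(\mathcal A_{j(k_m+1)})$, and $\widehat\tau_m:=\tau_m-E\tau_m\in L^2_0(\mathcal A_{j(k_m)})$, $\widehat\tau_m':=\tau_m'-E\tau_m'\in L^2_0(\mathcal A_{j(k_m+1)})$. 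A direct computation gives $\|\widehat\tau_m\|^2=r_{k_m}(1-r_{k_m})$ and $\|\widehat\tau_m-\widehat\tau_m'\|^2=p_{k_m}(1-p_{k_m})$, so that $dist(\widehat\tau_m,L^2_0(\mathcal A_{j(k_m+1)}))/\|\widehat\tau_m\|\to 0$. Since $n$ is finite, some ordered pair $(i,i')$ with $i\neq i'$ occurs for infinitely many $m$, whence $\delta(L^2_0(\mathcal A_i),L^2_0(\mathcal A_{i'}))=0$ and a fortiori $\delta\big(L^2_0(\mathcal A_i),\sum_{l\neq i}L^2_0(\mathcal A_l)\big)=0$; by Theorem~\ref{T:main}, $(1)\Leftrightarrow(8)$, the system $L^2_0(\mathcal A_1),\dots,L^2_0(\mathcal A_n)$ does not possess the IBAP, so $(1)$ fails.
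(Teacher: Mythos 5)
Your chain of equivalences among (1)--(5b) is correct and runs essentially along the paper's route: your density computation via the partial sums $S(m)$ and your induction on $k$ reproduce the content of Lemmas~\ref{L:sum L2} and \ref{L:LI}, and the rest is Theorem~\ref{T:IMP} plus Theorem~\ref{T:main}, $(1)\Leftrightarrow(2)$, exactly as in the paper. Your contrapositive proof of $(1)\Rightarrow(6)$ (centered tails $\widehat\tau_m$, $\widehat\tau_m'$, the computation $\|\widehat\tau_m-\widehat\tau_m'\|^2=p_{k_m}(1-p_{k_m})$, pigeonhole over pairs, then Theorem~\ref{T:main}, $(1)\Leftrightarrow(8)$) is also correct and is a mild variant of the paper's argument, which works with $H_{A_i}$ and the uncentered tails and then invokes Lemma~\ref{L:tails}.

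The genuine gap is in $(6)\Rightarrow(1)$, and it sits exactly where you said you expected trouble: the lower Riesz bound for the normalized block indicators $e_B=I_B/\sqrt{\mu(B)}$ is asserted, not proved. Evaluating at left endpoints does give the triangular relations $a_B=\sqrt{\mu(B)}\,f(\min B)-\sum_{B'\ni\min B,\,B'\neq B}\sqrt{\mu(B)/\mu(B')}\,a_{B'}$, but the off-diagonal weights need not be small: with $p_k=(1-q)q^{k-1}$, a long block $B$ starting at $k$ overlapped by a two-point block $B'=\{k-1,k\}$ from another partition gives $\sqrt{\mu(B)/\mu(B')}\approx\sqrt{q/(1-q^{2})}$, which exceeds $1$ by a large margin when $q$ is close to $1$, and such configurations can recur along all of $\mathbb N$. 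Lower triangularity plus geometric decay of the entries away from the diagonal does not by itself yield a bounded inverse of $I+C$ on $\ell^2$ (a lower bidiagonal matrix with constant subdiagonal of modulus greater than $1$ already has unbounded inverse), so ``inverting it'' is not a justification; one would have to exploit finer structure, e.g.\ that the weights telescope exactly along paths, and even then a nontrivial cancellation estimate remains. The family is in fact a Riesz family under (6) (this follows a posteriori from the theorem together with Theorem~\ref{T:IBAP and Riesz}(1), since each subfamily is orthonormal in $H_{A_i}$), so your strategy is not doomed, but as written the crucial estimate is missing. The paper circumvents precisely this difficulty by spanning $H_{A_i}$ with the tails $f_k=I_{\{k,k+1,\dots\}}$, $k\in A_i$ (dense span by Lemma~\ref{L:dense}), normalized by $1/\sqrt{p_k}$: Lemma~\ref{L:tails} converts (6) into boundedness of $p_{k+1}/p_k$ together with $\sup_k p_{k+N}/p_k<1$ for some $N$, so the weighted shift $S$ with $Se_k=\sqrt{p_{k+1}/p_k}\,e_{k+1}$ satisfies $\|S^{N}\|<1$, hence $I-S$ is an isomorphism and $(1/\sqrt{p_k})f_k=(I-S)^{-1}e_k$ is a Riesz basis; Theorem~\ref{T:IBAP and Riesz}(2) then gives (3). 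Your observation $\mu(B)\geqslant(1-q)\,r_{\min B}$ corresponds only to the first half of Lemma~\ref{L:tails} ($r_k/p_k$ bounded); the second half ($\sup_k p_{k+N}/p_k<1$) is exactly the quantitative input your sketch lacks.
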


Two remarks are in order.

\begin{remark} 
In comparison with Theorem~\ref{T:N} Theorem~\ref{T:extended N}
contains one extra point (3).
This point is crucial for the proof of Theorem~\ref{T:extended N}.
\end{remark}

\begin{remark} 
Theorem~\ref{T:extended N} implies Theorem~\ref{T:N}
(just apply Theorem~\ref{T:extended N} to the sets
$A_1\cup\{1\},A_2,...,A_n$ and note that 
$\sigma a(A_1\cup\{1\})=\sigma a(A_1)$).
\end{remark}

\subsection{Auxiliary lemmas}

To prove Theorem~\ref{T:extended N} we need the following four lemmas.

\begin{lemma}\label{L:sum L2}
Suppose $A_1,...,A_n$ are infinite subsets of $\mathbb{N}$ such that
$A_1\cup...\cup A_n=\mathbb{N}$.
Let $\mathcal{A}_1,...,\mathcal{A}_n$ be the corresponding $\sigma$-algebras.
Then

(1) $H_{A_1}+...+H_{A_n}=L^2(\mathcal{A}_1)+...+L^2(\mathcal{A}_n)$;

(2) the subspace $L^2(\mathcal{A}_1)+...+L^2(\mathcal{A}_n)$ 
is dense in $H=L^2(2^{\mathbb{N}})$.
\end{lemma}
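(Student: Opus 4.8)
The plan is to prove the two assertions separately: part~(1) by an elementary ``subtract a constant'' argument, and part~(2) by reducing to the density of single-point indicators.

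For part~(1), the inclusion $H_{A_1}+\dots+H_{A_n}\subseteq L^2(\mathcal{A}_1)+\dots+L^2(\mathcal{A}_n)$ is immediate, since by definition $H_{A_i}\subseteq L^2(\mathcal{A}_i)$ in both cases $1\in A_i$ and $1\notin A_i$. For the reverse inclusion it suffices to show $L^2(\mathcal{A}_i)\subseteq H_{A_1}+\dots+H_{A_n}$ for each fixed $i$, since every element of $L^2(\mathcal{A}_1)+\dots+L^2(\mathcal{A}_n)$ is a sum of such terms. So I would take $f\in L^2(\mathcal{A}_i)$. If $1\in A_i$ then $H_{A_i}=L^2(\mathcal{A}_i)$ and there is nothing to prove. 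If $1\notin A_i$, let $c$ be the value of $f$ on the first block $\{1,\dots,k_1-1\}$ of $part(A_i)$ (here $A_i=\{k_1<k_2<\dots\}$ with $k_1>1$). Using the hypothesis $A_1\cup\dots\cup A_n=\mathbb{N}$, pick $j$ with $1\in A_j$; then $H_{A_j}=L^2(\mathcal{A}_j)$ contains every constant function, because constants are constant on each block of any partition and lie in $L^2$ ($\mu$ being a probability measure). Now write $f=(f-c\mathbf{1})+c\mathbf{1}$, where $\mathbf{1}$ denotes the constant-one function: the term $c\mathbf{1}$ lies in $H_{A_j}$, while $f-c\mathbf{1}$ is still constant on the blocks of $part(A_i)$ and has value $f(1)-c=0$ at the point $1$, hence lies in $H_{A_i}$. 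Thus $f\in H_{A_i}+H_{A_j}$.

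For part~(2), I would use that the single-point indicators $\mathbf{1}_{\{m\}}$, $m\geqslant1$, after division by $\sqrt{p_m}$, form an orthonormal basis of $H=L^2(2^{\mathbb{N}})$ (this uses $p_m>0$ for all $m$), so their linear span is dense and it is enough to place each $\mathbf{1}_{\{m\}}$ in $L^2(\mathcal{A}_1)+\dots+L^2(\mathcal{A}_n)$. The key point is that for every $N\geqslant1$ the indicator $c_N:=\mathbf{1}_{\{1,\dots,N\}}$ already lies in a single $L^2(\mathcal{A}_i)$: choosing $i$ with $N+1\in A_i$ (possible since $A_1\cup\dots\cup A_n=\mathbb{N}$), the point $N+1$ is a ``starting point'' of $part(A_i)$, so no block of $part(A_i)$ straddles $N$ and $N+1$, whence $c_N$ is constant (equal to $1$ or $0$) on each block; and $c_N\in L^2$ trivially. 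Setting $c_0:=0$, we get $\mathbf{1}_{\{m\}}=c_m-c_{m-1}\in L^2(\mathcal{A}_1)+\dots+L^2(\mathcal{A}_n)$ for every $m\geqslant1$, which gives the required density.

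I do not expect a genuine obstacle here; the only points that need care are the combinatorics of $part(A_i)$ — namely that an integer $m\geqslant2$ is a starting point of $part(A_i)$ exactly when $m\in A_i$, whether or not $1\in A_i$ — and the fact that a constant function lies in $H_{A_j}$ only when $1\in A_j$, which is precisely what makes the hypothesis $A_1\cup\dots\cup A_n=\mathbb{N}$ enter the proof of part~(1).
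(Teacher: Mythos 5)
Your proof is correct and follows essentially the same route as the paper: part (1) is the paper's identity $L^2(\mathcal{A}_i)=H_{A_i}+\mathrm{span}(1)$ written out explicitly via the decomposition $f=(f-c\mathbf{1})+c\mathbf{1}$, and part (2) is the same telescoping argument, using the initial-segment indicators $I_{\{1,\dots,N\}}$ (placed in $L^2(\mathcal{A}_i)$ when $N+1\in A_i$) where the paper uses the tail indicators $I_{\{k,k+1,\dots\}}$ (placed when $k\in A_i$), with identical use of the hypothesis $A_1\cup\dots\cup A_n=\mathbb{N}$.
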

\begin{proof}
Let us prove (1).
For every $i$ we have $L^2(\mathcal{A}_i)=H_{A_i}+span(1)$.
Therefore $L^2(\mathcal{A}_1)+...+L^2(\mathcal{A}_n)=H_{A_1}+...+H_{A_n}+span(1)$.
There exists an index $i$ such that $1\in A_i$.
For such $i$ we have $H_{A_i}=L^2(\mathcal{A}_i)\ni 1$.
It follows that $H_{A_1}+...+H_{A_n}+span(1)=H_{A_1}+...+H_{A_n}$.
Thus $L^2(\mathcal{A}_1)+...+L^2(\mathcal{A}_n)=H_{A_1}+...+H_{A_n}$.

Let us prove (2).
Consider functions $f_k:=I_{\{k,k+1,...\}}$, $k=1,2,...$.
By the definition $f_k(t)=0$ for $t\leqslant k-1$ and
$f_k(t)=1$ for $t\geqslant k$.
If $k\in A_i$, then $f_k\in L^2(\mathcal{A}_i)$.
Since $A_1\cup...\cup A_n=\mathbb{N}$, we see that 
$f_k\in L^2(\mathcal{A}_1)+...+L^2(\mathcal{A}_n)$ for all $k\geqslant 1$.
Since $f_k-f_{k+1}=I_{\{k\}}$, we conclude that
$I_{\{k\}}\in L^2(\mathcal{A}_1)+...+L^2(\mathcal{A}_n)$ for all $k\geqslant 1$.
Thus every function of the form $a_1 I_{\{1\}}+...+a_m I_{\{m\}}$
belongs to $L^2(\mathcal{A}_1)+...+L^2(\mathcal{A}_n)$
and hence the subspace is dense in $H$.
\end{proof}

\begin{lemma}\label{L:LI}
Suppose $A_1,...,A_n$ are infinite subsets of $\mathbb{N}$ such that
$A_i\cap A_j=\emptyset$ for every pair $i\neq j$.
Let $\mathcal{A}_1,...,\mathcal{A}_n$ be the corresponding $\sigma$-algebras.
Then

(1) the subspaces $H_{A_1},...,H_{A_n}$ are linearly independent;

(2) the subspaces $L^2_0(\mathcal{A}_1),...,L^2_0(\mathcal{A}_n)$
are linearly independent.
\end{lemma}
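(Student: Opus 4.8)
The plan is to prove Lemma~\ref{L:LI} by reducing everything to the structure of the partition $part(A_i)$. First I would recall that an element of $H_{A_i}$ is a function on $\mathbb N$ that is constant on each block of $part(A_i)$ (and vanishes on the initial block $\{1,\dots,k_1^{(i)}-1\}$ when $1\notin A_i$). So if $f_i\in H_{A_i}$ is nonzero, its "support structure" — the set of coordinates where it is nonconstant, or rather the pattern of jumps between consecutive integers — is controlled by the starting points of the blocks of $part(A_i)$, which by construction are exactly the elements of $A_i$ (together with $1$). Concretely, for $f\in H_{A_i}$ the difference $f(k)-f(k-1)$ can be nonzero only when $k\in A_i$, and $f(1)$ can be nonzero only when $1\in A_i$. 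This "discrete derivative" viewpoint is the engine of the proof.

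For part (1): suppose $f_1+\dots+f_n=0$ with $f_i\in H_{A_i}$. Apply the difference operator $(\Delta f)(k):=f(k)-f(k-1)$ for $k\ge 2$, and also look at the value at $1$. For a fixed $k\ge 2$, since the $A_i$ are pairwise disjoint, $k$ lies in at most one $A_i$, say $A_{i_0}$; then $(\Delta f_j)(k)=0$ for all $j\ne i_0$, so $0=(\Delta(f_1+\dots+f_n))(k)=(\Delta f_{i_0})(k)$. Thus every $f_i$ has all its jumps equal to zero, i.e., each $f_i$ is globally constant. Similarly, evaluating at $1$: at most one index $i$ has $1\in A_i$; for all other $j$ we have $f_j(1)=0$, and combined with $f_j$ being constant this forces $f_j\equiv 0$ for all $j$ with $1\notin A_j$. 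For the single remaining index $i$ with $1\in A_i$ (if any), $H_{A_i}=L^2(\mathcal A_i)$ and $f_i$ is a constant $c$; but then $f_i=-\sum_{j\ne i}f_j=0$, so $c=0$. Hence all $f_i=0$, proving linear independence of $H_{A_1},\dots,H_{A_n}$. (If $1$ belongs to no $A_i$ the argument is even simpler: every $f_i$ is constant and vanishes at $1$, hence is $0$.)

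For part (2): I would deduce it from part (1). The subspaces $L^2_0(\mathcal A_i)$ and $H_{A_i}$ are closely related — in fact, passing from a function to "subtract its value on the initial block" or "subtract its mean" differ only by constants. More directly: if $g_i\in L^2_0(\mathcal A_i)$ with $g_1+\dots+g_n=0$, I can run the same $\Delta$-argument verbatim, since $L^2_0(\mathcal A_i)\subset L^2(\mathcal A_i)$ means each $g_i$ is still constant on the blocks of $part(A_i)$, so $(\Delta g_i)(k)=0$ unless $k\in A_i$. The disjointness argument again forces each $g_i$ to be globally constant; but $g_i\in L^2_0$ means $Eg_i=0$, and a constant with zero mean (with respect to a measure of total mass $1$) is $0$. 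So each $g_i=0$.

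The main obstacle — really the only subtlety — is bookkeeping around the index $1$ and the two different definitions of $H_A$ depending on whether $1\in A$; one must be careful that the "jump at $k$" argument is stated for $k\ge 2$ and that the value at $1$ is handled as a separate case, using that at most one $A_i$ contains $1$. Everything else is a routine telescoping/disjointness argument, and part (2) is genuinely easier than part (1) because the mean-zero constraint immediately kills the constants without needing any case split at $1$.
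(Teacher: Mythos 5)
Your proof is correct and rests on the same mechanism as the paper's: since the $A_i$ are pairwise disjoint, at each $k\geqslant 2$ at most one summand can have a nonzero jump $f(k)-f(k-1)$, which is exactly the inductive step in the paper's coordinate-by-coordinate argument (the paper propagates zeros forward from $t=1$, while you first conclude each summand is globally constant and then handle the value at $1$; this is just a repackaging, legitimate here because $p_k>0$ for all $k$ lets one work with genuine functions on $\mathbb{N}$ and constants lie in $L^2$). The only organizational difference is in part (2): the paper reduces it to part (1) by subtracting $x_i(1)$, whereas you rerun the jump argument on $L^2_0(\mathcal{A}_i)$ and kill the resulting constants with the zero-mean condition — both steps are immediate and correct.
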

\begin{proof}
Let us prove (1).
Assume that $x_1+...+x_n=0$, where $x_i\in H_{A_i}$, $i=1,2,...,n$.
We will show that $x_1=...=x_n=0$.
More precise, we will show by induction that 
$x_1(t)=...=x_n(t)=0$ for every $t\in\mathbb{N}$.

Let us prove this for $t=1$.
If $1\notin A_i$ for all $i=1,2,...,n$, then, by the definition of the subspace
$H_{A_i}$, we see that $x_i(1)=0$, $i=1,2,...,n$.
Now assume that $1\in A_i$ for some $i\in\{1,2,...,n\}$.
Then $1\notin A_j$ for every $j\neq i$.
It follows that $x_j(1)=0$ for every $j\neq i$.
Now the equality $x_1(1)+...+x_n(1)=0$ implies that $x_i(1)=0$.

Assume that $x_1(s)=...=x_n(s)=0$ for $s=1,2,...,t-1$.
Let us prove that $x_1(t)=...=x_n(t)=0$.
If $t\notin A_i$ for all $i=1,2,...,n$, then, by the definition of the subspace
$H_{A_i}$, we see that $x_i(t)=x_i(t-1)$ and hence $x_i(t)=0$, $i=1,2,...,n$.
Now assume that $t\in A_i$ for some $i\in\{1,2,...,n\}$.
Then $t\notin A_j$ for every $j\neq i$.
It follows that $x_j(t)=x_j(t-1)=0$ for every $j\neq i$.
Now the equality $x_1(t)+...+x_n(t)=0$ implies that $x_i(t)=0$.

Thus $x_1(t)=...=x_n(t)=0$ for every $t\in\mathbb{N}$.
This means that $x_1=...=x_n=0$ and, consequently, the subspaces
$H_{A_1},...,H_{A_n}$ are linearly independent.

Let us prove (2).
Assume that $x_1+...+x_n=0$, where $x_i\in L^2_0(\mathcal{A}_i)$, $i=1,2,...,n$.
Then $x_1(1)+...+x_n(1)=0$ and $(x_1-x_1(1))+...+(x_n-x_n(1))=0$.
But $x_i-x_i(1)\in H_{A_i}$, $i=1,2,...,n$.
Since $H_{A_1},...,H_{A_n}$ are linearly independent, we conclude that
$x_i-x_i(1)=0$ for every $i=1,2,...,n$.
Thus $x_i=x_i(1)$.
Recall that $x_i\in L^2_0(\mathcal{A}_i)$.
It follows that $E(x_i)=0$, $x_i(1)=0$, and $x_i=0$, $i=1,2,...,n$.
Hence the subspaces $L^2_0(\mathcal{A}_1),...,L^2_0(\mathcal{A}_n)$
are linearly independent.
\end{proof}

\begin{lemma}\label{L:dense}
For $k\in\mathbb{N}$ define the function $f_k:=I_{\{k,k+1,...\}}$, i.e.,
$f_k(t)=0$ for $t\leqslant k-1$ and $f_k(t)=1$ for $t\geqslant k$.
Suppose $A$ is an infinite subset of $\mathbb{N}$.
Then the linear span of the family $f_k$, $k\in A$,
is dense in $H_A$.
\end{lemma}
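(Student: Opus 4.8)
The plan is to describe $H_A$ in coordinates adapted to the partition $part(A)$ and then to exhibit the functions $f_k$, $k\in A$, via these coordinates. Write $A=\{k_1,k_2,k_3,\dots\}$ with $k_1<k_2<k_3<\dots$, and for $m\geqslant 1$ put $B_m:=\{k_m,k_m+1,\dots,k_{m+1}-1\}$; these sets, together with the block $\{1,\dots,k_1-1\}$ in case $k_1>1$, are exactly the elements of $part(A)$. The first step is to record the (routine) coordinate description of $H_A$: a function $f\in H$ belongs to $H_A$ if and only if $f$ is constant on each $B_m$, with value $c_m$ say, $f\equiv 0$ on $\{1,\dots,k_1-1\}$ in case $k_1>1$, and $\sum_{m\geqslant 1}|c_m|^2\bigl(\sum_{t\in B_m}p_t\bigr)<\infty$; equivalently $f=\sum_{m\geqslant 1}c_m I_{B_m}$, the series converging in $H$. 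This is just the canonical isometric isomorphism between $H_A$ and a weighted $\ell^2$-space.

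The second step is the elementary identity $f_{k_m}-f_{k_{m+1}}=I_{B_m}$, valid for every $m\geqslant 1$: indeed, from $f_k=I_{\{k,k+1,\dots\}}$ one has $f_{k_m}(t)-f_{k_{m+1}}(t)=1$ precisely when $k_m\leqslant t<k_{m+1}$, i.e. when $t\in B_m$, and $=0$ otherwise. I would also note here that each $f_{k_m}$ does lie in $H_A$: it is bounded, hence in $H$, is constant on every element of $part(A)$, and vanishes on $\{1,\dots,k_1-1\}$ when $k_1>1$ since $k_m\geqslant k_1$. Consequently the linear span of $\{f_k|k\in A\}$ is contained in $H_A$ and, by the identity, contains every indicator $I_{B_m}$, $m\geqslant 1$.

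For the final step I would take an arbitrary $f=\sum_{m\geqslant 1}c_m I_{B_m}\in H_A$ and consider the partial sums
\[
f^{(N)}:=\sum_{m=1}^{N}c_m I_{B_m}=\sum_{m=1}^{N}c_m\bigl(f_{k_m}-f_{k_{m+1}}\bigr),
\]
which are finite linear combinations of the $f_k$, $k\in A$. Since $\|f-f^{(N)}\|^2=\sum_{m>N}|c_m|^2\bigl(\sum_{t\in B_m}p_t\bigr)\to 0$ as $N\to\infty$, the linear span of $\{f_k|k\in A\}$ is dense in $H_A$.

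There is no real obstacle in this argument; the only point needing a little care is carrying out the bookkeeping uniformly in the two cases $1\in A$ and $1\notin A$ and making the coordinate description of $H_A$ precise, after which the density statement reduces to truncating an $\ell^2$-series. (Since $A$ is infinite there are infinitely many blocks $B_m$, so no degenerate case arises.)
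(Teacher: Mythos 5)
Your proposal is correct and follows essentially the same route as the paper: the key point in both is the telescoping identity $f_{k_m}-f_{k_{m+1}}=I_{\{k_m,\dots,k_{m+1}-1\}}$, after which density follows since the block indicators span a dense subspace of $H_A$ (the paper states this last step as obvious, while you spell it out via truncation of the $\ell^2$-expansion). No gaps.
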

\begin{proof}
Let $A=\{k_1,k_2,...\}$, where $k_1<k_2<...$.
We have $f_{k_1}-f_{k_2}=I_{\{k_1,...,k_2-1\}}$,
$f_{k_2}-f_{k_3}=I_{\{k_2,...,k_3-1\}}$ and so on.
The linear span of these functions is dense in $H_A$.
It follows that the linear span of the family $f_{k_1},f_{k_2},...$
is also dense in $H_A$.
\end{proof}

\begin{lemma}\label{L:tails}
Let $\{p_k\,|\,k\geqslant 1\}$ be a sequence of positive numbers
such that the series $\sum_{k=1}^\infty p_k$ converges.
Set $r_k:=\sum_{j=k}^\infty p_j$.
The following statements are equivalent:

(1) $\sup\{r_{k+1}/r_k\,|\,k\geqslant 1\}<1$;

(2) the sequence $\{r_k/p_k\,|\,k\geqslant 1\}$ is bounded;

(3) the sequence $\{p_{k+1}/p_k\,|\,k\geqslant 1\}$ is bounded
and there exists a natural number $N$ such that
$\sup\{p_{k+N}/p_k\,|\,k\geqslant 1\}<1$.
\end{lemma}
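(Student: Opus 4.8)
The plan is to hang everything on the single identity $r_k = p_k + r_{k+1}$, which we may rewrite as $r_{k+1}/r_k = 1 - p_k/r_k$. Reading this off both sides shows at once that $\sup_k r_{k+1}/r_k < 1$ is the same condition as $\inf_k p_k/r_k > 0$, i.e. as the boundedness of $\{r_k/p_k\}$. So $(1)\Leftrightarrow(2)$ is immediate and needs no estimates (note also that $r_k > p_k > 0$ for every $k$, so all the ratios in play are well defined and strictly between $0$ and $1$).

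Next I would prove $(1)\Rightarrow(3)$. Put $q := \sup_k r_{k+1}/r_k < 1$. Then $r_{k+1}\le q r_k$, hence $p_{k+1}\le r_{k+1}\le q r_k$ while $p_k = r_k - r_{k+1}\ge (1-q)r_k$; dividing gives $p_{k+1}/p_k \le q/(1-q)$, so $\{p_{k+1}/p_k\}$ is bounded. Iterating $r_{k+1}\le q r_k$ gives $r_{k+N}\le q^N r_k$, whence $p_{k+N}\le r_{k+N}\le q^N r_k \le \frac{q^N}{1-q}\,p_k$. Since $q<1$ we may fix $N$ so large that $q^N < 1-q$, and then $\sup_k p_{k+N}/p_k < 1$, which is the remaining half of (3).

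The substantive step is $(3)\Rightarrow(2)$ (equivalently $(3)\Rightarrow(1)$). Assume $C := \sup_k p_{k+1}/p_k < \infty$ and, for some fixed $N$, $\beta := \sup_k p_{k+N}/p_k < 1$. I would estimate $r_k = \sum_{m\ge 0} p_{k+m}$ by splitting the summation index $m$ according to its residue $t\in\{0,1,\dots,N-1\}$ modulo $N$: writing $m = t + sN$ with $s\ge 0$, the $N$-step contraction gives $p_{k+t+sN}\le \beta^s p_{k+t}$ and the one-step bound gives $p_{k+t}\le C^t p_k \le \max\{1,C^{N-1}\}\,p_k$. Summing the geometric series in $s$ and then over the $N$ values of $t$ yields $r_k \le \frac{N}{1-\beta}\max\{1,C^{N-1}\}\,p_k$, a bound independent of $k$, so $\{r_k/p_k\}$ is bounded. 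The main obstacle is just organizing this double sum so that the two one-sided hypotheses of (3) are used in the correct roles — the $N$-step contraction controlling how far out along the tail one travels, and the one-step ratio bound comparing terms within a single block of length $N$ — after which the rest is an elementary geometric estimate.
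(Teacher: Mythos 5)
Your proof is correct and follows essentially the same route as the paper: the identity $r_{k+1}/r_k=1-p_k/r_k$ for $(1)\Leftrightarrow(2)$, iteration of the tail ratio bound to get $(1)\Rightarrow(3)$, and the decomposition of the tail sum into blocks of length $N$ (residues modulo $N$) for $(3)\Rightarrow(2)$. The only differences are cosmetic constants, e.g.\ your bound $N\max\{1,C^{N-1}\}/(1-\beta)$ in place of the paper's $(C^{N}-1)/((C-1)(1-a))$, which in fact handles the case $C\leqslant 1$ more cleanly.
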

\begin{proof}
$(1)\Leftrightarrow(2)$
This equivalence follows from the equality
$r_{k+1}/r_k=(r_k-p_k)/r_k=1-p_k/r_k=1-1/(r_k/p_k)$.

$(1)\Rightarrow (3)$
Assume that $a:=\sup\{r_{k+1}/r_k\,|\,k\geqslant 1\}<1$.
Since $(1)\Rightarrow(2)$, we conclude that 
the sequence $\{r_k/p_k\,|\,k\geqslant 1\}$ is bounded.
Set $C:=\sup\{r_k/p_k\,|\,k\geqslant 1\}$.

Since $p_{k+1}/p_k<r_k/p_k$, we conclude that the sequence
$\{p_{k+1}/p_k\,|\,k\geqslant 1\}$ is bounded.

Let us show that there exists a natural number $N$ such that
$\sup\{p_{k+N}/p_k\,|\, k\geqslant 1\}<1$.
We have $r_{k+1}/r_k\leqslant a$, $k\geqslant 1$.
It follows that $r_{k+N}/r_k\leqslant a^N$, $k\geqslant 1$, $N\geqslant 1$.
Thus 
$$
\dfrac{p_{k+N}}{p_k}<
\dfrac{r_{k+N}}{p_k}\leqslant
\dfrac{a^N r_k}{p_k}\leqslant
Ca^N
$$
for every $k\geqslant 1$.
Now choose $N$ so that $a^N<1/C$.
For such $N$ we have $\sup\{p_{k+N}/p_k\,|\,k\geqslant 1\}\leqslant Ca^N<1$. 

$(3)\Rightarrow(2)$
Let $C$ be such that $p_{k+1}/p_k\leqslant C$, $k\geqslant 1$.
Then $p_{k+1}\leqslant C p_k$, $k\geqslant 1$,
and consequently $p_{k+m}\leqslant C^m p_k$, $k\geqslant 1$.
Set $a:=\sup\{p_{k+N}/p_k\,|\,k\geqslant 1\}<1$.
Then $p_{k+N}\leqslant a p_k$, $k\geqslant 1$, and consequently
$p_{k+mN}\leqslant a^m p_k$, $k\geqslant 1$.
Consider arbitrary $m\geqslant 0$.
We can write $m=m_1 N+m_2$, where the numbers 
$m_1\geqslant 0$ and $m_2\in\{0,1,...,N-1\}$.
We have
$$
p_{k+m}=p_{k+m_1 N+m_2}\leqslant C^{m_2}p_{k+m_1 N}\leqslant
C^{m_2} a^{m_1}p_k=a^{m_1}C^{m_2}p_k
$$
and
\begin{align*}
&r_k=\sum_{j=k}^\infty p_j=\sum_{m=0}^\infty p_{k+m}\leqslant 
\sum_{m=0}^\infty a^{m_1}C^{m_2}p_k=\\
&=(1+a+a^2+...)(1+C+...+C^{N-1})p_k=
\dfrac{C^N-1}{(C-1)(1-a)}p_k
\end{align*}
for every $k\geqslant 1$.
It follows that the sequence $\{r_k/p_k\,|\,k\geqslant 1\}$ is bounded.
\end{proof}

\subsection{Proof of Theorem~\ref{T:extended N}}

Let us show that points (1), (2), (3), (4a), (4b), (5a), (5b) are pairwise equivalent.
We already know that $(1)\Leftrightarrow (2)$(see Theorem~\ref{T:IMP}).
Also, it is clear that $(4a)\Leftrightarrow(4b)$ and $(5a)\Leftrightarrow(5b)$.

$(2)\Rightarrow (4a)$
follows from Theorem~\ref{T:main}, $(1)\Leftrightarrow (2)$.

$(4a)\Rightarrow (2)$
follows from Lemma~\ref{L:LI},(2), and Theorem~\ref{T:main}, $(1)\Leftrightarrow (2)$.

$(4b)\Leftrightarrow (5b)$
follows from Lemma~\ref{L:sum L2}, (2).

$(3)\Rightarrow (4b)$
follows from Theorem~\ref{T:main}, $(1)\Leftrightarrow (2)$,
and Lemma~\ref{L:sum L2}, (1). 

$(4b)\Rightarrow (3)$
follows from Lemma~\ref{L:sum L2}, (1), Lemma~\ref{L:LI}, (1), 
and Theorem~\ref{T:main}, $(1)\Leftrightarrow (2)$.

Now we will show that $(6)\Rightarrow (3)$.
Assume (6).
By Lemma~\ref{L:tails} the sequence $\{p_{k+1}/p_k\,|\,k\geqslant 1\}$ is bounded
and there exists a natural number $N$ such that
$\sup\{p_{k+N}/p_k\,|\,k\geqslant 1\}<1$.
We will show that the system of subspaces $H_{A_1},...,H_{A_n}$ possesses the IBAP.
To this end we will use Theorem~\ref{T:IBAP and Riesz}, (2).
For each $k\geqslant 1$ define the function $f_k:=I_{\{k,k+1,...\}}$, i.e.,
$f_k(t)=0$ for $t\leqslant k-1$ and $f_k(t)=1$ for $t\geqslant k$.
If $k\in A_i$, then, clearly, $f_k\in H_{A_i}$.
For every $i\in\{1,2,...,n\}$ consider the family of elements
$(1/\sqrt{p_k})f_k$, $k\in A_i$ of the subspace $H_{A_i}$.
Lemma~\ref{L:dense} implies that the linear span of the family is dense in
$H_{A_i}$, $i=1,2,...,n$.
We claim that the family 
$\{(1/\sqrt{p_k})f_k\,|\,k\in A_1\cup A_2\cup...\cup A_n=\mathbb{N}\}$
is a Riesz basis for $H$.
If we will prove the claim, then by Theorem~\ref{T:IBAP and Riesz}, (2),
the system of subspaces $H_{A_1},...,H_{A_n}$ possesses the IBAP.

So, let us prove that the family $\{(1/\sqrt{p_k})f_k\,|\,k\geqslant 1\}$
is a Riesz basis for $H$.
Set $e_k:=(1/\sqrt{p_k})I_{\{k\}}$, $k\geqslant 1$.
Then the family $\{e_k\,|\,k\geqslant 1\}$ is an orthonormal basis for $H$.
Define the continuous linear operator (weighted shift) $S:H\to H$
by $Se_k:=\sqrt{p_{k+1}/p_k}e_{k+1}$, $k\geqslant 1$, that is,
$$
S\left(\sum_{k=1}^\infty a_k e_k\right):=
\sum_{k=1}^\infty a_k\sqrt{p_{k+1}/p_k} e_{k+1},\quad
\sum_{k=1}^\infty|a_k|^2<\infty.
$$
Since the sequence $\{p_{k+1}/p_k\,|\,k\geqslant 1\}$ is bounded, we see
that the operator $S$ is well-defined and is a continuous linear operator.
We have $S(\sqrt{p_k}e_k)=\sqrt{p_{k+1}}e_{k+1}$.
It follows that $S^N(\sqrt{p_k}e_k)=\sqrt{p_{k+N}}e_{k+N}$, i.e.,
$S^N e_k=\sqrt{p_{k+N}/p_k}e_{k+N}$.
Thus
$$
S^N\left(\sum_{k=1}^\infty a_k e_k\right)=
\sum_{k=1}^\infty a_k\sqrt{p_{k+N}/p_k}e_{k+N},\quad
\sum_{k=1}^\infty|a_k|^2<\infty.
$$
It follows that 
$\|S^N\|=\sup\{\sqrt{p_{k+N}/p_k}\,|\,k\geqslant 1\}<1$.
Consequently, the operator $I-S^N$ is an isomorphism$\Rightarrow$
the operator $I-S$ is also an isomorphism.
We have
$$
Sf_k=
S\left(\sum_{j=k}^\infty I_{\{j\}}\right)=
S\left(\sum_{j=k}^\infty\sqrt{p_j}e_j\right)=
\sum_{j=k}^\infty\sqrt{p_{j+1}}e_{j+1}=
f_{k+1}.
$$
Since $f_k-f_{k+1}=I_{\{k\}}=\sqrt{p_k}e_k$, we conclude that
$(I-S)f_k=\sqrt{p_k}e_k$,
$(I-S)((1/\sqrt{p_k})f_k)=e_k$,
$(1/\sqrt{p_k})f_k=(I-S)^{-1}e_k$, $k\geqslant 1$.
Therefore the family $\{(1/\sqrt{p_k})f_k\,|\,k\geqslant 1\}$
is a Riesz basis for $H$.

Finally, assume that there exists a number $k_0\geqslant 1$ 
such that for every $k\geqslant k_0$
the numbers $k$ and $k+1$ belong to distinct sets $A_i$.
Let us show that $(3)\Rightarrow(6)$.
Assume that the system of subspaces $H_{A_1},...,H_{A_n}$ possesses the IBAP.
Then for every pair of distinct indices $i,j$ the pair of subspaces
$H_{A_i}$ and $H_{A_j}$ possesses the IBAP.
Consequently, $\delta(H_{A_i},H_{A_j})>0$.
Set $\delta:=\min\{\delta(H_{A_i},H_{A_j})\,|\,i\neq j\}>0$.
Then for arbitrary $i\neq j$ and elements $x\in H_{A_i}$, $y\in H_{A_j}$ we have
$$
\|x-y\|\geqslant dist(x,H_{A_j})\geqslant\delta(H_{A_i},H_{A_j})\|x\|
\geqslant\delta\|x\|.
$$
Thus
\begin{equation}\label{ineq:x,y}
\|x-y\|\geqslant\delta\|x\|.
\end{equation}
For each $k\geqslant 1$ define the function $f_k:=I_{\{k,k+1,...\}}$, i.e.,
$f_k(t)=0$ for $t\leqslant k-1$ and $f_k(t)=1$ for $t\geqslant k$.
If $k\in A_i$, then, clearly, $f_k\in H_{A_i}$.
Since for every $k\geqslant k_0$ the numbers $k,k+1$ belong to distinct sets $A_i$,
inequality~\eqref{ineq:x,y} implies that
\begin{equation}\label{ineq:f_k and f_k+1}
\|f_k-f_{k+1}\|\geqslant\delta\|f_k\|,\quad k\geqslant k_0.
\end{equation}
Since $f_k-f_{k+1}=I_{\{k\}}$, we see that $\|f_k-f_{k+1}\|=\sqrt{p_k}$.
Also we have $\|f_k\|=\sqrt{\sum_{j=k}^\infty p_j}=\sqrt{r_k}$.
Now inequality \eqref{ineq:f_k and f_k+1} implies that
$\sqrt{p_k}\geqslant\delta\sqrt{r_k}$, $k\geqslant k_0$, that is,
$r_k/p_k\leqslant 1/\delta^2$, $k\geqslant k_0$.
Thus the sequence $\{r_k/p_k\,|\,k\geqslant 1\}$ is bounded.
By Lemma~\ref{L:tails} we get (6).

The proof is complete.

\subsection{Example 2 ($\Omega=[a,b)$)}\label{SS:Omega=[a,b)}

Consider the space $\Omega:=[a,b)$, where $a,b\in\mathbb{R}$, $a<b$,
with the Borel $\sigma$-algebra $\mathcal{F}=\mathcal{B}([a,b))$.
Let $\mu$ be a probability measure on $\mathcal{F}$.
Set $H:=L^2([a,b),\mathcal{F},\mu)$.
For a sequence of points $\pi=\{a_1,a_2,a_3,...\}$,
where $a<a_1<a_2<a_3<...$ and $a_k\to b$ as $k\to\infty$,
define the partition of $[a,b)$, $part(\pi)$, by
$$
part(\pi):=\{[a,a_1),[a_1,a_2),[a_2,a_3),[a_3,a_4),...\}.
$$
Let $\sigma a(\pi)$ be the $\sigma$-algebra generated by $part(\pi)$.

We will study the following problem.
Let $\pi_1,...,\pi_n$ be sequences of points.
Question: when the collection of $\sigma$-algebras
$\sigma a(\pi_1),...,\sigma a(\pi_n)$ possesses the IMP?
Equivalently, when the system of subspaces
$L^2_0(\sigma a(\pi_1)),...,L^2_0(\sigma a(\pi_n))$ possesses the IBAP?
In the following theorem we establish relations between 
the IMP, the IBAP, closedness of the sum of marginal subspaces and
``fast decreasing'' of tails of the measure $\mu$.

\begin{theorem}\label{T:[a,b)}
Let $\pi_i=\{a^{(i)}_1,a^{(i)}_2,...\}$ be a sequence of points such that
$a<a^{(i)}_1<a^{(i)}_2<...$ and $a^{(i)}_k\to b$ as $k\to\infty$
for every $i=1,2,...,n$.
Assume that $\pi_i\cap \pi_j=\emptyset$, $i\neq j$.
Let $\pi_1\cup...\cup\pi_n=\{b_2,b_3,b_4,...\}$, where
$a=:b_1<b_2<b_3<...$.
Assume that $\mu([b_k,b_{k+1}))>0$ for $k\geqslant 1$.
Consider the following statements:

(1) the collection $\sigma a(\pi_1),...,\sigma a(\pi_n)$ possesses the IMP;

(2) the system of subspaces
$L^2_0(\sigma a(\pi_1)),...,L^2_0(\sigma a(\pi_n))$ possesses the IBAP;

(3) the subspace
$L^2_0(\sigma a(\pi_1))+...+L^2_0(\sigma a(\pi_n))$ 
is closed in $L^2(\mathcal{B}([a,b)))$;

(4)$L^2_0(\sigma a(\pi_1))+...+L^2_0(\sigma a(\pi_n))=
L^2_0(\sigma a(\pi_1\cup...\cup\pi_n))$;

(5) $\sup\{\mu([b_{k+1},b))/\mu([b_k,b))\,|\,k\geqslant 1\}<1$.

Then statements (1), (2), (3), (4) are pairwise equivalent and
$(5)\Rightarrow (1)$.
If there exists $k_0\geqslant 1$ such that
for arbitrary $i\in\{1,2,...,n\}$ and arbitrary $k\geqslant k_0$
the interval $(a^{(i)}_k,a^{(i)}_{k+1})$ contains a point $a^{(j)}_l$,
then $(1)\Rightarrow(5)$.
\end{theorem}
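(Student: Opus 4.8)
The plan is to deduce Theorem~\ref{T:[a,b)} from the already proved Theorem~\ref{T:N} by transporting the whole situation to a copy of $(\mathbb{N},2^{\mathbb{N}},\nu)$. Put $b_1:=a$ and recall $\pi_1\cup...\cup\pi_n=\{b_2,b_3,...\}$ with $b_1<b_2<...$; set $p_k:=\mu([b_k,b_{k+1}))$, $k\geqslant 1$, so that $p_k>0$ and $\sum_k p_k=1$, and let $\nu$ be the probability measure on $\mathbb{N}$ with $\nu(\{k\})=p_k$. Define $\Phi:L^2(\mathbb{N},2^{\mathbb{N}},\nu)\to L^2([a,b),\mathcal{B}([a,b)),\mu)$ by $(\Phi g)(t):=g(k)$ for $t\in[b_k,b_{k+1})$. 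A direct computation gives $\|\Phi g\|=\|g\|$ and $E(\Phi g)=E(g)$, and the range of $\Phi$ is exactly $L^2(\mathcal{B}_0)$, where $\mathcal{B}_0:=\sigma a(\pi_1\cup...\cup\pi_n)$ is generated by the partition $\{[b_k,b_{k+1})\,|\,k\geqslant 1\}$; thus $\Phi$ is an isometric isomorphism of $L^2(\mathbb{N},2^{\mathbb{N}},\nu)$ onto $L^2(\mathcal{B}_0)$ which carries $L^2_0(\nu)$ onto $L^2_0(\mathcal{B}_0)$.

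Next I would translate the marginal subspaces. Each point of $\pi_i$ is some $b_m$, so let $A_i\subset\mathbb{N}\setminus\{1\}$ be the set of indices $m$ with $b_m\in\pi_i$. Then $A_1,...,A_n$ are infinite, pairwise disjoint, and partition $\mathbb{N}\setminus\{1\}$, and under the identification $k\leftrightarrow[b_k,b_{k+1})$ the partition $part(\pi_i)$ becomes $part(A_i)$. Hence $\Phi(L^2(\sigma a(A_i)))=L^2(\sigma a(\pi_i))$ and $\Phi(L^2_0(\sigma a(A_i)))=L^2_0(\sigma a(\pi_i))$ for every $i$, so $\Phi$ is an isomorphism of the system $(L^2(\mathbb{N},2^{\mathbb{N}},\nu);L^2_0(\sigma a(A_1)),...,L^2_0(\sigma a(A_n)))$ onto the system $(L^2(\mathcal{B}_0);L^2_0(\sigma a(\pi_1)),...,L^2_0(\sigma a(\pi_n)))$. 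Since $L^2(\mathcal{B}_0)$ is closed in $L^2(\mathcal{B}([a,b)))$ and contains every $L^2_0(\sigma a(\pi_i))$, Proposition~\ref{prop:relative position} shows that the IBAP of $L^2_0(\sigma a(\pi_1)),...,L^2_0(\sigma a(\pi_n))$ may be tested inside $L^2(\mathcal{B}_0)$, and closedness of their sum (and the identity in (4), whose right‑hand side $L^2_0(\sigma a(\pi_1\cup...\cup\pi_n))$ is just $L^2_0(\mathcal{B}_0)$) may likewise be tested there. Under $\Phi$ these become precisely statements (1)--(4) of Theorem~\ref{T:N} for the partition $A_1,...,A_n$ with weights $p_k$; noting that there $r_k=\sum_{j\geqslant k}p_j=\mu([b_k,b))$, statement (5) here is statement (5) there. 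This yields that (1)--(4) are pairwise equivalent and that $(5)\Rightarrow(1)$.

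For the last implication it remains to check that the separation hypothesis of Theorem~\ref{T:[a,b)} entails the separation hypothesis of Theorem~\ref{T:N}, i.e.\ produces $K\geqslant 2$ with the property that $m$ and $m+1$ belong to distinct sets $A_i$ for all $m\geqslant K$. Indeed, if $m,m+1\in A_i$ then $b_m,b_{m+1}$ are consecutive points of $\pi_i$, say $b_m=a^{(i)}_k$ and $b_{m+1}=a^{(i)}_{k+1}$, and the open interval $(a^{(i)}_k,a^{(i)}_{k+1})=(b_m,b_{m+1})$ contains no point of $\pi_1\cup...\cup\pi_n$. For fixed $i$ the position $k$ of a point of $\pi_i$ inside $\pi_i$ is an increasing, unbounded function of its position $m$ inside the union, so — there being only finitely many $i$ — one can choose $K$ so large that any such $m\geqslant K$ forces $k\geqslant k_0$, contradicting the hypothesis. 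Hence for $m\geqslant K$ the numbers $m,m+1$ lie in distinct $A_i$, and Theorem~\ref{T:N} gives $(1)\Rightarrow(5)$.

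I expect everything except the very last paragraph to be a routine transport of structure along $\Phi$; the only genuinely non-mechanical point is matching the two ``eventual separation'' conditions, which requires the comparison of indices ($k\to\infty$ as $m\to\infty$ within each $\pi_i$, uniformly over the finitely many $i$).
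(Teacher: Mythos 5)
Your proposal is correct and is essentially the paper's own argument: the paper likewise transports everything through the unitary identification of $L^2(\mathbb{N},2^{\mathbb{N}},\nu)$ (with $\nu(\{k\})=\mu([b_k,b_{k+1}))$) with $L^2(\sigma a(\pi_1\cup...\cup\pi_n))$, matches the sets $A_i=\{k\,|\,b_k\in\pi_i\}$ and $r_k=\mu([b_k,b))$, and then invokes Theorem~\ref{T:N} together with Theorem~\ref{T:IMP}. Your final paragraph merely spells out the transfer of the separation hypothesis, which the paper dismisses with ``clearly,'' so there is nothing to object to.
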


\begin{remark}
The condition $\pi_i\cap\pi_j=\emptyset$, $i\neq j$, is necessary for
the collection $\sigma a(\pi_1),...,\sigma a(\pi_n)$ to possess the IMP.
Indeed, assume that $\pi_i\cap\pi_j\neq\emptyset$ for some $i\neq j$.
Let $c\in\pi_i\cap\pi_j$, then $c\in\pi_i$ and $c\in\pi_j$.
Consider the function $f=\mu([c,b))I_{[a,c)}-\mu([a,c))I_{[c,b)}$.
It is clear that $f\in L^2(\sigma a(\pi_i))$, $f\in L^2(\sigma a(\pi_j))$, and $E(f)=0$.
Thus $f\in L^2_0(\sigma a(\pi_i))\cap L^2_0(\sigma a(\pi_j))$.
Consequently, 
$L^2_0(\sigma a(\pi_i))\cap L^2_0(\sigma a(\pi_j))\neq\{0\}$
$\Rightarrow$
the pair $L^2_0(\sigma a(\pi_i)),L^2_0(\sigma a(\pi_j))$ does not possess the IBAP
$\Rightarrow$
the system $L^2_0(\sigma a(\pi_1)),...,L^2_0(\sigma a(\pi_n))$ does not possess the IBAP
$\Rightarrow$
the collection $\sigma a(\pi_1),...,\sigma a(\pi_n)$ does not possess the IMP.
\end{remark}

\begin{proof}[Proof of Theorem~\ref{T:[a,b)}]
Theorem~\ref{T:[a,b)} follows from Theorem~\ref{T:N}.
Let us show this.

Define the measure $\nu$ on $2^{\mathbb{N}}$ by
$\nu(\{k\}):=\mu([b_k,b_{k+1}))$, $k\geqslant 1$.
It is clear that $\nu$ is a probability measure and 
$\nu(\{k\})>0$, $k\geqslant 1$.
Define the operator 
$U:L^2(\mathbb{N},2^{\mathbb{N}},\nu)\to L^2(\sigma a(\pi_1\cup...\cup\pi_n))$ by
$$
(Ux)|_{[b_k,b_{k+1})}:=x(k),\quad k\geqslant 1,\quad
x\in L^2(\mathbb{N},2^{\mathbb{N}},\nu).
$$
One can easily check that $U$ is well-defined, unitary, and
$E(Ux)=E(x), x\in L^2(\mathbb{N},2^{\mathbb{N}},\nu)$. 

Define sets $A_i\subset\mathbb{N}\setminus\{1\}$, $i=1,2,...,n$ by
$A_i:=\{k\,|\,b_k\in\pi_i\}$, $i=1,2,...,n$.
It is clear that each set $A_i$ is infinite and the sets
$A_1,...,A_n$ form a partition of $\mathbb{N}\setminus\{1\}$.
It is easily seen that
$U(L^2(\sigma a(A_i)))=L^2(\sigma a(\pi_i))$, $i=1,2,...,n$,
$U(L^2_0(2^{\mathbb{N}}))=L^2_0(\sigma a(\pi_1\cup...\cup\pi_n))$, and
$U(L^2_0(\sigma a(A_i)))=L^2_0(\sigma a(\pi_i))$, $i=1,2,...,n$.
Thus

(i) 
the system $L^2_0(\sigma a(\pi_1)),...,L^2_0(\sigma a(\pi_n))$
possesses the IBAP in $L^2(\mathcal{B}([a,b)))$
$\Leftrightarrow$
the system $L^2_0(\sigma a(\pi_1)),...,L^2_0(\sigma a(\pi_n))$
possesses the IBAP in $L^2(\sigma a(\pi_1\cup...\cup\pi_n))$
$\Leftrightarrow$
the system $L^2_0(\sigma a(A_1)),...,L^2_0(\sigma a(A_n))$
possesses the IBAP in $L^2(2^{\mathbb{N}})$;

(ii)
the subspace $L^2_0(\sigma a(\pi_1))+...+L^2_0(\sigma a(\pi_n))$ 
is closed in $L^2(\mathcal{B}([a,b)))$
$\Leftrightarrow$
the subspace $L^2_0(\sigma a(\pi_1))+...+L^2_0(\sigma a(\pi_n))$
is closed in $L^2(\sigma a(\pi_1\cup...\cup\pi_n))$
$\Leftrightarrow$
the subspace $L^2_0(\sigma a(A_1))+...+L^2_0(\sigma a(A_n))$
is closed in $L^2(2^{\mathbb{N}})$;

(iii)
$L^2_0(\sigma a(\pi_1))+...+L^2_0(\sigma a(\pi_n))=
L^2_0(\sigma a(\pi_1\cup...\cup\pi_n))$
$\Leftrightarrow$
$L^2_0(\sigma a(A_1))+...+L^2_0(\sigma a(A_n))=L^2_0(2^{\mathbb{N}})$.

Let us show that points (1), (2), (3), (4) of Theorem~\ref{T:[a,b)}
are pairwise equivalent.
We already know that $(1)\Leftrightarrow(2)$ (see Theorem~\ref{T:IMP}).
Theorem~\ref{T:N} and (i), (ii), (iii) imply that
$(2)\Leftrightarrow(3)\Leftrightarrow(4)$.
Therefore (1), (2), (3), (4) are pairwise equivalent.

Let us show that $(5)\Rightarrow(1)$.
Assume (5).
Since $r_k=\sum_{j=k}^\infty\nu(\{j\})=
\sum_{j=k}^\infty\mu([b_j,b_{j+1}))=\mu([b_k,b))$,
we see that (5) can be rewritten as
$\sup\{r_{k+1}/r_k\,|\,k\geqslant 1\}<1$.
By Theorem~\ref{T:N} 
the collection $\sigma a(A_1),...,\sigma a(A_n)$ possesses the IMP
$\Rightarrow$
the system $L^2_0(\sigma a(A_1)),...,L^2_0(\sigma a(A_n))$ possesses the IBAP
$\Rightarrow$
the system $L^2_0(\sigma a(\pi_1)),...,L^2_0(\sigma a(\pi_n))$ possesses the IBAP
$\Rightarrow$
the collection $\sigma a(\pi_1),...,\sigma a(\pi_n)$ possesses the IMP.

Finally, assume that there exists $k_0\geqslant 1$ such that
for arbitrary $i\in\{1,2,...,n\}$ and arbitrary $k\geqslant k_0$
the interval $(a^{(i)}_k,a^{(i)}_{k+1})$ contains a point $a^{(j)}_l$.
We have to show that $(1)\Rightarrow(5)$.
Suppose that
the collection $\sigma a(\pi_1),...,\sigma a(\pi_n)$ possesses the IMP
$\Rightarrow$
the system $L^2_0(\sigma a(\pi_1)),...,L^2_0(\sigma a(\pi_n))$ possesses the IBAP
$\Rightarrow$
the system $L^2_0(\sigma a(A_1)),...,L^2_0(\sigma a(A_n))$ possesses the IBAP
$\Rightarrow$
the collection $\sigma a(A_1),...,\sigma a(A_n)$ possesses the IMP.
Clearly, there exists a number $\widetilde{k}_0\geqslant 2$
such that for arbitrary $k\geqslant\widetilde{k}_0$ the numbers
$k$ and $k+1$ belong to distinct sets $A_i$.
Now Theorem~\ref{T:N} implies that
$\sup\{r_{k+1}/r_k\,|\,k\geqslant 1\}<1$
$\Rightarrow(5)$.

The proof is complete.
\end{proof}

\subsection{A sufficient condition for a collection of
sub-$\sigma$-algebras to possess the IMP}

We will give a sufficient condition for a collection of
sub-$\sigma$-algebras to possess the IMP.
Our result is motivated by the following result of
Peter J.~Bickel, Ya'akov Ritov and Jon A.~Wellner for a pair of
sub-$\sigma$-algebras.

Let $(\Omega_1,\mathcal{A},\mu_1)$ and $(\Omega_2,\mathcal{B},\mu_2)$ 
be two probability spaces.
Set $(\Omega,\mathcal{F}):=(\Omega_1\times\Omega_2,\mathcal{A}\otimes\mathcal{B})$.
Suppose $\mu$ is a probability measure on $\mathcal{A}\otimes\mathcal{B}$ 
with marginals $\mu_1$ and $\mu_2$
(that is, $\mu(A\times\Omega_2)=\mu_1(A), A\in\mathcal{A}$ and 
$\mu(\Omega_1\times B)=\mu_2(B), B\in\mathcal{B}$).
Let $\mathcal{F}_1=\mathcal{A}\times\Omega_2=\{A\times\Omega_2\mid A\in\mathcal{A}\}$ and
$\mathcal{F}_2=\Omega_1\times\mathcal{B}=\{\Omega_1\times B\mid B\in\mathcal{B}\}$.
Then $L^2(\mathcal{F}_1)$ consists of (equivalence classes of) 
random variables of the form $\xi(\omega_1,\omega_2)=f(\omega_1)$ with
$f\in L^2(\Omega_1,\mathcal{A},\mu_1)$.
Similarly, $L^2(\mathcal{F}_2)$ consists of (equivalence classes of) 
random variables of the form $\xi(\omega_1,\omega_2)=g(\omega_2)$ with
$g\in L^2(\Omega_2,\mathcal{B},\mu_2)$.
Bickel, Ritov and Wellner showed that if there exists $\alpha>0$ such that
$$
\mu(A\times B)\geqslant\alpha\mu_1(A)\mu_2(B)
$$
for arbitrary $A\in\mathcal{A}, B\in\mathcal{B}$,
then the subspaces $L^2_0(\mathcal{F}_1)$ and $L^2_0(\mathcal{F}_2)$ 
are linearly independent (i.e., their intersection is $\{0\}$)
and their sum is closed in $L^2(\mathcal{F})$.
This means that the pair of subspaces 
$L^2_0(\mathcal{F}_1)$, $L^2_0(\mathcal{F}_2)$ possesses the IBAP.
Consequently, the pair $\mathcal{F}_1,\mathcal{F}_2$ possesses the IMP.

We will prove the following theorem, which is a generalization of
the result of Bickel, Ritov and Wellner.

\begin{theorem}\label{T:sufficient for IMP}
Let $(\Omega,\mathcal{F},\mu)$ be a probability space and
$\mathcal{F}_1,...,\mathcal{F}_n$ be sub-$\sigma$-algebras of $\mathcal{F}$.
If there exists a number $\alpha>0$ such that
$$
\mu(A_1\cap...\cap A_n)\geqslant\alpha\mu(A_1)...\mu(A_n)
$$
for arbitrary $A_1\in\mathcal{F}_1,...,A_n\in\mathcal{F}_n$, then
the collection of sub-$\sigma$-algebras
$\mathcal{F}_1,...,\mathcal{F}_n$ possesses the IMP.
\end{theorem}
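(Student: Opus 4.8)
The plan is to reduce the statement to the main characterization theorem, Theorem~\ref{T:main}, via Theorem~\ref{T:IMP}. By Theorem~\ref{T:IMP}, it suffices to show that the system of marginal subspaces $L^2_0(\mathcal{F}_1),\dots,L^2_0(\mathcal{F}_n)$ possesses the IBAP in $L^2(\mathcal{F})$. By Theorem~\ref{T:main}, $(1)\Leftrightarrow(3)$, this is equivalent to the estimate
$$
\|\xi_1+\dots+\xi_n\|^2\geqslant c^2\bigl(\|\xi_1\|^2+\dots+\|\xi_n\|^2\bigr)
$$
for all $\xi_k\in L^2_0(\mathcal{F}_k)$, with some constant $c>0$ independent of the $\xi_k$. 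So the whole proof comes down to producing such a constant from the hypothesis $\mu(A_1\cap\dots\cap A_n)\geqslant\alpha\,\mu(A_1)\cdots\mu(A_n)$.

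First I would translate the measure-theoretic hypothesis into an operator inequality. Introduce the product probability space $(\Omega^n,\mathcal{F}^{\otimes n},\mu^{\otimes n})$ and the "diagonal-type" comparison: the hypothesis $\mu(A_1\cap\dots\cap A_n)\geqslant\alpha\prod_k\mu(A_k)$ says exactly that the pushforward of $\mu$ under the diagonal map $\omega\mapsto(\omega,\dots,\omega)$, restricted to the product $\sigma$-algebra $\mathcal{F}_1\otimes\dots\otimes\mathcal{F}_n$, dominates $\alpha$ times the product measure $\mu|_{\mathcal{F}_1}\otimes\dots\otimes\mu|_{\mathcal{F}_n}$ on rectangles, hence (by a standard monotone-class/$\pi$-system argument) on all of $\mathcal{F}_1\otimes\dots\otimes\mathcal{F}_n$. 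The point of this reformulation is that for nonnegative $\mathcal{F}_k$-measurable functions $g_k\geqslant 0$ one gets
$$
E\bigl(g_1(\omega)\cdots g_n(\omega)\bigr)\;\geqslant\;\alpha\,E(g_1)\cdots E(g_n),
$$
first for indicators by the hypothesis, then for simple functions by bilinearity, then in general by monotone convergence.

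Next I would exploit this to control cross terms. For $\xi_k\in L^2_0(\mathcal{F}_k)$, expanding $\|\xi_1+\dots+\xi_n\|^2=\sum_k\|\xi_k\|^2+\sum_{i\neq j}\langle\xi_i,\xi_j\rangle$, the problem is to bound the off-diagonal sum below, i.e.\ to bound $|\langle\xi_i,\xi_j\rangle|$ by something strictly smaller than the diagonal. The natural device is: consider the symmetric positive functional and apply the correlation inequality above to the functions $g_k=1+t\,\mathrm{Re}(\xi_k/\|\xi_k\|)$ (or $g_k=1+t\,\eta_k$ with $\eta_k$ a normalized real part) for small $t>0$, so that $g_k\geqslant 0$; since $E(\eta_k)=0$, expanding $E(\prod_k(1+t\eta_k))\geqslant\alpha$ to second order in $t$ yields an inequality relating $\sum_{i<j}E(\eta_i\eta_j)$ to the residual $\alpha-1\leqslant 0$. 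A cleaner route, and the one I would actually pursue, is to establish directly that the Gram-type quadratic form $\sum_{i,j}\langle\xi_i,\xi_j\rangle$ is bounded below by $\alpha\bigl\|\sum_k\xi_k/\text{(something)}\bigr\|^2$-type expression; concretely, I expect that the correlation inequality gives, for the specific choice turning products into the relevant inner products,
$$
E\Bigl|1+\sum_{k=1}^n\xi_k\Bigr|^2\ \text{or rather}\ \Bigl\|\sum_k\xi_k\Bigr\|^2\;\geqslant\;\alpha\sum_{k=1}^n\|\xi_k\|^2 ,
$$
after subtracting off the mean-zero normalization. Working this out carefully — choosing the test functions so that the product expectation collapses exactly to $\sum_k\|\xi_k\|^2$ on the right and to $\|\sum_k\xi_k\|^2$ (plus controlled lower-order terms) on the left — is the crux.

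The main obstacle I anticipate is precisely this bookkeeping step: the hypothesis is multiplicative in $n$ functions, whereas the IBAP estimate is quadratic, so one must pass from an $n$-fold product inequality to a $2$-parameter (Gram) inequality without losing the constant. I would handle this by a homogeneity/scaling argument — replace $\xi_k$ by $t_k\xi_k$, expand $E\prod_k(1+t_k\eta_k)\geqslant\alpha\prod_k E(1+t_k\eta_k)=\alpha$ in the multilinear variables $t_1,\dots,t_n$, and extract the coefficient identities; the quadratic coefficients give $E(\eta_i\eta_j)=\langle\eta_i,\eta_j\rangle$-control, while positivity of the whole expression for all small $t_k\geqslant 0$ forces the Gram matrix $(\langle\eta_i,\eta_j\rangle)$ to be bounded away from singular by an amount depending only on $\alpha$ and $n$. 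Once the estimate $\|\xi_1+\dots+\xi_n\|^2\geqslant c^2\sum_k\|\xi_k\|^2$ is in hand with $c=c(\alpha,n)>0$, Theorem~\ref{T:main} gives the IBAP for $L^2_0(\mathcal{F}_1),\dots,L^2_0(\mathcal{F}_n)$, and Theorem~\ref{T:IMP} then delivers the IMP for $\mathcal{F}_1,\dots,\mathcal{F}_n$, completing the proof.
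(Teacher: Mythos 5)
Your reduction (Theorem~\ref{T:IMP} plus Theorem~\ref{T:main}, $(1)\Leftrightarrow(3)$) and your measure-theoretic setup are exactly right, and in fact coincide with the route the paper sketches in a remark after its main proof: the hypothesis says that the pushforward $\nu$ of $\mu$ under the diagonal map dominates $\alpha\,\mu_1\otimes\cdots\otimes\mu_n$ on rectangles, hence on all of $\mathcal{F}_1\otimes\cdots\otimes\mathcal{F}_n$. But at the decisive step you stop using this domination as an inequality between measures and only extract from it the correlation inequality $E(g_1\cdots g_n)\geqslant\alpha E(g_1)\cdots E(g_n)$ for nonnegative $g_k$, and then try to recover the Gram estimate by expanding $E\prod_k(1+t_k\eta_k)\geqslant\alpha$ in small $t_k$. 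That mechanism cannot work: taking $A_1=\cdots=A_n=\Omega$ forces $\alpha\leqslant 1$, so as $t_k\to 0$ the left-hand side tends to $1\geqslant\alpha$ and the inequality is asymptotically vacuous -- the quadratic coefficients are multiplied by $t_it_j$ while the slack $1-\alpha$ is of order one, so no lower bound on $\sum_{i<j}\langle\eta_i,\eta_j\rangle$ independent of $t$ emerges; and for $t_k$ of order one you would need $\eta_k$ bounded (to keep $g_k\geqslant 0$) and control of third and higher mixed moments, which $L^2$ data do not give. So the crux you flagged is a genuine gap, not mere bookkeeping.

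The missing idea is simply to apply the measure inequality $\nu\geqslant\alpha\,\mu_1\otimes\cdots\otimes\mu_n$ to the single nonnegative product-measurable function $F(\omega_1,\dots,\omega_n)=|\xi_1(\omega_1)+\cdots+\xi_n(\omega_n)|^2$, rather than to products $g_1(\omega_1)\cdots g_n(\omega_n)$. By the change-of-variables formula for the pushforward, $\int_{\Omega^n}F\,d\nu=\int_\Omega|\xi_1(\omega)+\cdots+\xi_n(\omega)|^2\,d\mu=\|\xi_1+\cdots+\xi_n\|^2$, while on the product-measure side Fubini makes every cross term $\int\xi_k(\omega_k)\overline{\xi_l(\omega_l)}\,d(\mu_1\otimes\cdots\otimes\mu_n)$ factor as $E(\xi_k)E(\overline{\xi_l})=0$ because the $\xi_k$ have mean zero, so $\int_{\Omega^n}F\,d(\mu_1\otimes\cdots\otimes\mu_n)=\sum_k\|\xi_k\|^2$. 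This gives $\|\xi_1+\cdots+\xi_n\|^2\geqslant\alpha\bigl(\|\xi_1\|^2+\cdots+\|\xi_n\|^2\bigr)$, i.e.\ the estimate of Theorem~\ref{T:main}(3) with $c=\sqrt{\alpha}$, and the rest of your argument then goes through. (For comparison, the paper's primary proof is different in flavor: it takes the Radon--Nikodym derivative $h$ of $\mu_1\otimes\cdots\otimes\mu_n$ with respect to $\nu$, which satisfies $0\leqslant h\leqslant 1/\alpha$, sets $\xi(\omega):=(\xi_1(\omega)+\cdots+\xi_n(\omega))h(\omega,\dots,\omega)$, and verifies $P_k\xi=\xi_k$ directly, thereby exhibiting an explicit solution rather than invoking the characterization theorem.)
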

\begin{proof}
We will show that the system of subspaces
$L^2_0(\mathcal{F}_1),...,L^2_0(\mathcal{F}_n)$ 
possesses the IBAP in $L^2(\mathcal{F})$.

\textbf{1.}
Consider the probability space $(\Omega,\mathcal{F},\mu)$, 
the measurable space $(\Omega,\mathcal{F}_1)\otimes...\otimes (\Omega,\mathcal{F}_n)=
(\Omega^n,\mathcal{F}_1\otimes...\otimes\mathcal{F}_n)$ and
a mapping $T:\Omega\to\Omega^n$ defined by
$T\omega=(\omega,...,\omega)$, $\omega\in\Omega$.
Since $T^{-1}(A_1\times...\times A_n)=A_1\cap...\cap A_n$ for
$A_1\in\mathcal{F}_1,...,A_n\in\mathcal{F}_n$, we see that $T$ is measurable.
Thus we can define the pushforward measure $\nu=T_{\ast}\mu$ 
on $\mathcal{F}_1\otimes...\otimes\mathcal{F}_n$ by
$\nu(A):=\mu(T^{-1}(A))$, $A\in\mathcal{F}_1\otimes...\otimes\mathcal{F}_n$.
The measure $\nu$ has the following properties.

Firstly, we have the change-in-variables formula: 
if a function $f:\Omega^n\to\mathbb{K}$ is 
$\mathcal{F}_1\otimes...\otimes\mathcal{F}_n$-measurable, then
\begin{equation}\label{eq:change-in-variables}
\int_{\Omega^n}f(\omega_1,...,\omega_n)d\nu=
\int_{\Omega}f(\omega,...,\omega)d\mu
\end{equation}
(the equality means that the first integral exists 
if and only if the second exists, and if they exist, then they are equal).

To formulate the second property of $\nu$ denote by $\mu_i$ 
the restriction of $\mu$ to $\mathcal{F}_i$, $i=1,2,....n$.
For arbitrary $A_1\in\mathcal{F}_1,...,A_n\in\mathcal{F}_n$ we have
\begin{align*}
\nu(A_1\times...\times A_n)&=\mu(A_1\cap...\cap A_n)\geqslant
\alpha\mu(A_1)...\mu(A_n)=\\
&=\alpha\mu_1(A_1)...\mu_n(A_n)=
\alpha(\mu_1\otimes...\otimes\mu_n)(A_1\times...\times A_n).
\end{align*}
It follows that 
\begin{equation}\label{ineq:greater}
\nu(A)\geqslant\alpha(\mu_1\otimes...\otimes\mu_n)(A)
\end{equation}
for each set $A\in\mathcal{F}_1\otimes...\otimes\mathcal{F}_n$.

Since $(\mu_1\otimes...\otimes\mu_n)(A)\leqslant (1/\alpha)\nu(A)$,
$A\in\mathcal{F}_1\otimes...\otimes\mathcal{F}_n$, we see that
$\mu_1\otimes...\otimes\mu_n$ is absolutely continuous with respect to $\nu$.
By the Radon-Nikodym theorem there exists an 
$\mathcal{F}_1\otimes...\otimes\mathcal{F}_n$-measurable function
$h:\Omega^n\to\mathbb{R}$ such that
$$
(\mu_1\otimes...\otimes\mu_n)(A)=\int_A h(\omega_1,...,\omega_n)d\nu
$$
for every set $A\in\mathcal{F}_1\otimes...\otimes\mathcal{F}_n$.
Since $0\leqslant(\mu_1\otimes...\otimes\mu_n)(A)\leqslant(1/\alpha)\nu(A)$,
$A\in\mathcal{F}_1\otimes...\otimes\mathcal{F}_n$, we conclude that
$0\leqslant h(\omega_1,...,\omega_n)\leqslant 1/\alpha$ for $\nu$-almost every
point $(\omega_1,...,\omega_n)\in\Omega^n$.
We can and will assume that $0\leqslant h(\omega_1,...,\omega_n)\leqslant 1/\alpha$
for every point $(\omega_1,...,\omega_n)\in\Omega^n$.

Also, if a function $f:\Omega^n\to\mathbb{K}$ is 
$\mathcal{F}_1\otimes...\otimes\mathcal{F}_n$-measurable, then
\begin{equation}\label{eq:Radon-Nikodym}
\int_{\Omega^n}f(\omega_1,...,\omega_n)d(\mu_1\otimes...\otimes\mu_n)=
\int_{\Omega^n}f(\omega_1,...,\omega_n)h(\omega_1,...,\omega_n)d\nu
\end{equation}
(the equality means that the first integral exists 
if and only if the second exists, and if they exist, then they are equal).

\textbf{2.}
In what follows we will often use the following simple fact.
If $\mathcal{A}$ is a sub-$\sigma$-algebra of $\mathcal{F}$
and a random variable $\xi:\Omega\to\mathbb{K}$ is $\mathcal{A}$-measurable, then
$\int_\Omega\xi d\mu=\int_\Omega\xi d(\mu|_{\mathcal{A}})$
(the equality means that the first integral exists if and only if 
the second exists, and if they exist, then they are equal).

\textbf{3.}
Now we are ready to prove that the system of subspaces
$L^2_0(\mathcal{F}_1),...,L^2_0(\mathcal{F}_n)$ possesses the IBAP.
Consider arbitrary elements 
$\xi_1\in L^2_0(\mathcal{F}_1),...,\xi_n\in L^2_0(\mathcal{F}_n)$.
In what follows we assume that $\xi_k$ is an $\mathcal{F}_k$-measurable
random variable, $k=1,2,...,n$.
Define a random variable $\xi:\Omega\to\mathbb{K}$ by
$$
\xi(\omega):=(\xi_1(\omega)+...+\xi_n(\omega))h(\omega,...,\omega),
\quad\omega\in\Omega.
$$
Since $0\leqslant h(\omega,...,\omega)\leqslant 1/\alpha$ for every $\omega\in\Omega$,
we conclude that $\xi\in L^2(\mathcal{F})$.
Denote by $P_k$ the orthogonal projection onto $L^2_0(\mathcal{F}_k)$ in
$L^2(\mathcal{F})$, $k=1,2,...,n$.
Let us show that $P_1\xi=\xi_1$.
To this end we will prove that $\xi-\xi_1\bot L^2_0(\mathcal{F}_1)$.
Consider arbitrary element $\eta_1\in L^2_0(\mathcal{F}_1)$.
In what follows we assume that $\eta_1$ is an $\mathcal{F}_1$-measurable
random variable.
Using~\eqref{eq:change-in-variables},~\eqref{eq:Radon-Nikodym} and
Fubini theorem, we get
\begin{align*}
&\langle\xi,\eta_1\rangle=
\int_\Omega\xi(\omega)\overline{\eta_1(\omega)}d\mu=
\int_\Omega(\xi_1(\omega)+...+\xi_n(\omega))h(\omega,...,\omega)
\overline{\eta_1(\omega)}d\mu=\\
&=\int_{\Omega^n}(\xi_1(\omega_1)+...+\xi_n(\omega_n))
\overline{\eta_1(\omega_1)}h(\omega_1,...,\omega_n)d\nu=\\
&=\int_{\Omega^n}(\xi_1(\omega_1)+...+\xi_n(\omega_n))
\overline{\eta_1(\omega_1)}d(\mu_1\otimes...\otimes\mu_n)=\\
&=\int_{\Omega^n}\xi_1(\omega_1)\overline{\eta_1(\omega_1)}
d(\mu_1\otimes...\otimes\mu_n)+
\sum_{k=2}^n\int_{\Omega^n}\xi_k(\omega_k)\overline{\eta_1(\omega_1)}
d(\mu_1\otimes...\otimes\mu_n)=\\
&=\int_\Omega\xi_1(\omega_1)\overline{\eta_1(\omega_1)}d\mu_1+
\sum_{k=2}^n\int_\Omega\overline{\eta_1(\omega_1)}d\mu_1\cdot
\int_\Omega\xi_k(\omega_k)d\mu_k=\\
&=E(\xi_1\overline{\eta_1})+\sum_{k=2}^n E(\overline{\eta_1})E(\xi_k)=
\langle\xi_1,\eta_1\rangle.
\end{align*}
It follows that $\langle\xi-\xi_1,\eta_1\rangle=0$.
Since $\eta_1\in L^2_0(\mathcal{F}_1)$ is arbitrary, we conclude that
$\xi-\xi_1\bot L^2_0(\mathcal{F}_1)$ and consequently $P_1\xi=\xi_1$.
Similarly, one can show that $P_k\xi=\xi_k$ for $k=2,...,n$.
Thus the system of subspaces $L^2_0(\mathcal{F}_1),...,L^2_0(\mathcal{F}_n)$
possesses the IBAP. 
\end{proof}

\begin{remark}
One can prove Theorem~\ref{T:sufficient for IMP}
by using Theorem~\ref{T:main}, $(1)\Leftrightarrow(3)$.
Indeed, for arbitrary elements
$\xi_1\in L^2_0(\mathcal{F}_1),...,\xi_n\in L^2_0(\mathcal{F}_n)$
by using~\eqref{eq:change-in-variables},~\eqref{ineq:greater} and
Fubini theorem we have
\begin{align*}
&\|\xi_1+...+\xi_n\|^2=
\int_\Omega|\xi_1(\omega)+...+\xi_n(\omega)|^2 d\mu=
\int_{\Omega^n}|\xi_1(\omega_1)+...+\xi_n(\omega_n)|^2 d\nu\geqslant\\
&\geqslant\alpha\int_{\Omega^n}|\xi_1(\omega_1)+...+\xi_n(\omega_n)|^2
d(\mu_1\otimes...\otimes\mu_n)=\\
&=\alpha\sum_{k=1}^n\int_{\Omega^n}|\xi_k(\omega_k)|^2
d(\mu_1\otimes...\otimes\mu_n)+
\alpha\sum_{k\neq l}\int_{\Omega^n}\xi_k(\omega_k)\overline{\xi_l(\omega_l)}
d(\mu_1\otimes...\otimes\mu_n)=\\
&=\alpha\sum_{k=1}^n\int_\Omega|\xi_k(\omega_k)|^2 d\mu_k+
\alpha\sum_{k\neq l}\int_\Omega \xi_k(\omega_k)d\mu_k\cdot
\int_\Omega \overline{\xi_l(\omega_l)}d\mu_l=\\
&=\alpha\sum_{k=1}^n\|\xi_k\|^2+\alpha\sum_{k\neq l}E(\xi_k)E(\overline{\xi_l})=
\alpha\sum_{k=1}^n\|\xi_k\|^2.
\end{align*}
Therefore $\|\xi_1+...+\xi_n\|\geqslant
\sqrt{\alpha}\sqrt{\|\xi_1\|^2+...+\|\xi_n\|^2}$.
By Theorem~\ref{T:main}, $(1)\Leftrightarrow(3)$, the system of subspaces
$L^2_0(\mathcal{F}_1),...,L^2_0(\mathcal{F}_n)$ possesses the IBAP.
\end{remark}

\subsection*{Acknowledgements}

The research was funded by 
Institute of Mathematics of NAS of Ukraine.
A part of this research was supported by 
the Project 2017-3M from the Department of Targeted Training of
Taras Shevchenko National University of Kyiv at the NAS of Ukraine.


\begin{thebibliography}{99}

\bibitem{Bickel91}
P.J. Bickel, Y. Ritov and J.A. Wellner,
\textit{Efficient estimation of linear functionals of a probability measure P with known marginal distributions},
Ann. Statist.
19 no.3
(1991),
1316--1346.

\bibitem{Buja96}
A. Buja,
\textit{What Criterion for a Power Algorithm?},
in: H. Rieder (ed.),
Robust Statistics, Data Analysis, and Computer Intensive Methods.
Lecture Notes in Statistics,
vol. 109,
Springer, New York, NY,
1996,
49--61.

\bibitem{CR10}
P.L. Combettes and N.N. Reyes,
\textit{Functions with prescribed best linear approximations},
J. Approx. Theory
162 issue 5
(2010),
1095--1116.

\bibitem{EW06}
M. Enomoto and Y. Watatani,
\textit{Relative position of four subspaces in a Hilbert space},
Adv. Math.
201 (2006),
263--317.

\bibitem{F12}
I.S. Feshchenko,
\textit{On closeness of the sum of n subspaces of a Hilbert space},
Ukrainian Math. J.
63 issue 10
(2012),
1566--1622.

\bibitem{GK69}
I.C. Gohberg, M.G. Kre\u{i}n,
\textit{Introduction to the theory of linear nonselfadjoint
operators in Hilbert space},
Translations of Mathematical Monographs,
vol. 18,
American Mathematical Society,
Providence, Rhode Island,
1969.

\bibitem{GL05}
V.I. Gurariy, W. Lusky,
\textit{Geometry of M\"{u}ntz Spaces and Related Questions},
Lecture Notes in Mathematics,
vol. 1870,
Springer-Verlag,
Berlin,
2005.

\bibitem{H11}
C. Heil,
\textit{A Basis Theory Primer: Expanded Edition},
Applied and Numerical Harmonic Analysis,
Birkh\"{a}user, 
2011.

\bibitem{O94}
M.I. Ostrovskii,
\textit{Topologies on the set of all subspaces of a Banach space 
and related questions of Banach space geometry},
Quaest. Math.
17 no.3
(1994),
259--319.

\bibitem{Pinkus15}
A. Pinkus,
\textit{Ridge Functions} (Cambridge Tracts in Mathematics),
Cambridge: Cambridge University Press,
2015.

\bibitem{SF13}
A.V. Strelets and I.S. Feshchenko,
\textit{Systems of subspaces in Hilbert space that obey certain conditions,
on their pairwise angles},
St. Petersburg Math. J.
24 No. 5 (2013),
823--846.

\bibitem{S88}
V.S. Sunder,
\textit{N subspaces},
Canad. J. Math.
40 issue 1 (1988),
38--54.

\bibitem{T00}
W.-S. Tang,
\textit{Oblique projections, biorthogonal Riesz bases and multiwavelets
in Hilbert spaces},
Proc. Amer. Math. Soc.
128 (2000),
463--473.

\end{thebibliography}
\end{document}